\documentclass[11pt,reqno]{amsproc}

\usepackage[margin=1in]{geometry}

\usepackage[usenames,dvipsnames]{color}
\usepackage[colorlinks=true, pdfstartview=FitV, linkcolor=RoyalBlue,
            citecolor=ForestGreen, urlcolor=blue]{hyperref}
\usepackage{fourier}

\title[On Inviscid Limits for the Stochastic Navier-Stokes Equations and Related Models]
{On Inviscid Limits for the Stochastic Navier-Stokes Equations and~Related~Models
\footnote{{\em Date:} \today} }

\author{Nathan Glatt-Holtz}
\address{Institute for Mathematics and its Applications, University of Minnesota, Minneapolis, MN 55455 \and Department of Mathematics, Virginia Tech, Blacksburg, VA 24061}
\email{negh@ima.umn.edu, negh@vt.edu}

\author{Vladim\'ir \v{S}ver\'ak}
\address{Department of Mathematics, University of Minnesota, Minneapolis, MN 55455}
\email{sverak@math.umn.edu}

\author{Vlad Vicol}
\address{Department of Mathematics, Princeton University, Princeton, NJ 08544}
\email{vvicol@math.princeton.edu}

\usepackage[pdftex]{graphicx}
\usepackage{epstopdf}

\usepackage{amsmath, amsthm, amssymb,amsfonts,latexsym,verbatim,amsbsy}

\usepackage[usenames,dvipsnames]{color}

\usepackage{times}

\usepackage{hyperref}

\makeatletter
\@addtoreset{equation}{section}
\makeatother

\theoremstyle{plain}
\newtheorem{theorem}{Theorem}[section]

\newtheorem{conjecture}[theorem]{Conjecture}

\theoremstyle{definition}
\newtheorem{remark}[theorem]{Remark}

\newtheorem*{ex}{Example}


\renewcommand{\tilde}{\widetilde}

\def\RR{\mathbb R}

\def\eps{{\varepsilon}}

\newcommand{\pd}[1]{\partial_{#1}}

\newcommand{\E}{\mathbb{E}}
\newcommand{\Prb}{\mathbb{P}}
\def\osc{{\rm osc}}
\newcommand{\Vort}{\omega}
\newcommand{\Pres}{\pi}

\newcommand{\bfU}{\boldsymbol{u}}
\newcommand{\TT}{\mathbb{T}^2}
\newcommand{\XXX}{\mathcal{X}}
\newcommand{\be}{\begin{equation}}
\newcommand{\ee}{\end{equation}}

\newcommand{\la}{\label}

\newcommand{\rf}[1]{(\ref{#1})}
\newcommand{\weak}{weak$^*\,\,${}}
\newcommand{\weakly}{weakly$^*\,\,${}}
\newcommand{\EE}{\mathcal E}
\renewcommand{\div}{\operatorname{div}}
\newcommand{\curl}{\operatorname{curl}}
\newcommand{\OO}{\mathcal O}
\def\ZZ{{\mathbb{Z}}}
\newcommand{\om}{\omega}
\newcommand{\Om}{\Omega}
\newcommand{\ve}{\varepsilon}
\newcommand{\dd}{\Delta}
\newcommand{\Real}{\mathbb R}
\newcommand{\pul}{{1\over 2}}
\newcommand{\linf}{L^{\infty}}

\newcommand{\orb}{\OO_{\om_0}}
\newcommand{\orbe}{\OO_{\om_0,E}}
\newcommand{\orbec}{\overline{\OO}^{w^*}_{\om_0,E}}
\newcommand{\orbc}{\overline{\OO}^{\,w*}_{\om_0}}
\newcommand{\Trj}{\Om_t}

\newcommand{\intt}{\int_{\TT}}
\newcommand{\inta}{{1\over |\TT|}\intt\,}
\newcommand{\Ym}{Y_{\rm low}\,}


\begin{document}

\begin{abstract}
  We study inviscid limits of invariant measures for the 2D Stochastic Navier-Stokes
  equations.  As shown
  in \cite{Kuksin2004} the noise scaling $\sqrt{\nu}$ is the only one which leads to non-trivial
  limiting measures, which are invariant for the 2D Euler equations.  We show that
  any limiting measure $\mu_{0}$ is in fact supported on bounded vorticities.  Relationships
  of $\mu_{0}$ to the long term dynamics of Euler in the $L^{\infty}$ with the weak$^{*}$
  topology are discussed. In view of the Batchelor-Krainchnan 2D turbulence theory, we also consider
  inviscid limits for the weakly damped stochastic Navier-Stokes equation.  
  In this setting we show that only an order zero noise (i.e. the noise scaling $\nu^0$) leads to a nontrivial limiting measure in the inviscid limit.
\end{abstract}

\maketitle

\section{Introduction}\label{sec:intro}
We consider incompressible Euler's equations and the randomly forced incompressible Navier-Stokes equation on a two-dimensional torus $\TT=\Real^2/\Lambda$, where $\Lambda\subset \Real^2$ is a lattice.\footnote{The reason why we consider general flat tori, rather then just $\Real^2/\ZZ^2$ is that the geometry of the torus might have some influence on various predictions concerning the long-time behavior of the solution. This issue will however come up only tangentially, and it will not be important for the results proved in the paper.  The reader can take   $\TT=\Real^2/\ZZ^2$ or $\TT=\Real^2/2\pi \ZZ^2$  most of the time. } The Euler equation will mostly be considered in the vorticity form
\begin{align}
\label{euler}
\pd{t} \Vort + \bfU \cdot\nabla \Vort =0,
\end{align}
where we assume that $\intt \Vort(x,t)\,dx=0$ and the velocity field $u=(u_1,u_2)$ is determined by $\Vort$ from the equations
\be\la{divcurl}
\curl \bfU = \nabla^{\perp}\cdot\bfU = \pd{1}u_{2}- \pd{2}u_{1}= \om,\quad \div \bfU = 0, \quad \intt \bfU = 0\,.
\ee
In terms of the stream function,
$\bfU=\nabla^{\perp} \psi, \  \dd \psi = \Vort,$
with the usual notation $\nabla^{\perp}\psi=(-\psi_{x_2}, \psi_{x_1})$. The Navier-Stokes equation will be written either in the velocity formulation
\be\la{NSu}
\pd{t} \bfU + \bfU \cdot \nabla \bfU + {\nabla p\over \rho} -\nu \dd \bfU = \boldsymbol{f},
\ee
where $\rho$ is the (constant) density and the forcing term $\boldsymbol{f}=\boldsymbol{f}(x,t)$ satisfies $\intt \boldsymbol{f}(x,t)\,dx=0$ for each $t$, or in the vorticity formulation
\be\la{NSom}
\pd{t} \Vort + \bfU \cdot \nabla\Vort -\nu \dd \Vort = \curl \boldsymbol{f}\,\,,
\ee
with the relation between $\bfU$ and $\Vort$ given as above. The particular stochastic form of $\boldsymbol{f}$ will be discussed below~\eqref{randomf}.

In this paper we study certain classes of invariant measures for \eqref{euler} and related equations.
In particular, we address regularity properties and relations to the long term dynamics of 2D Euler
for a particular class of invariant measures which arise as an inviscid limit of the stochastic Navier-Stokes equations,
with a suitable scaling of the noise coefficients.

 To discuss topics which come up in various accounts of 2D turbulence, we will also use a linear damping operator $Y$ defined in the Fourier coordinates\footnote{
The Fourier representation in set up in the following way. For scalar functions $v$ on $\TT$ we will write
$
 v(x)=\sum_{k\in 2\pi\Lambda^*} \hat v_k e^{ikx}\,,
$
 where $\Lambda^*$ is the lattice dual to $\Lambda$. Our functions $v$ will satisfy $\intt v(x)\,dx=0$ which is the same as  $\hat v_0=0$ and the above sum
 will always be taken over the non-zero elements of $2\pi\Lambda^*$.  If there is no danger of confusion we will write simply
$ 
v(x)=\sum_k \hat v_k e^{ikx}.
$
Divergence-free vector fields $\boldsymbol{f}$ will be written as
$
 \boldsymbol{f}(x)=\sum_k \hat f_k \boldsymbol{e}_k(x),
$ 
where
 \begin{align}
 \label{basis}
\boldsymbol{e}_k(x)=\left(\frac{-ik_2}{|k|}, \frac{ik_1}{|k|}\right)e^{ikx}.
\end{align}
We note that in our normalization we have
$ 
\inta |v|^2 \,dx= \sum_k |v_k|^2,
$
and 
$
\inta |\boldsymbol{f}|^2\,dx=\sum_k |f_k|^2.
$
}
 by
\be\la{Yop}
\widehat{{Y \bfU}}_k=\gamma_k  \hat u_k\,,
\ee
where $\gamma_k\ge 0$. Often it is assumed that $\gamma_k\ne 0$ only for a few lowest modes, but other options, such as $Y\bfU=\gamma \bfU$ for some $\gamma>0$ (corresponding to $\gamma_k=\gamma$ for each $k$) are also possible. We will denote the operator $Y$ with $\gamma_k\ne 0$ only for a few low modes by $\Ym$. Its precise form will not be important for our discussion.

\subsection{Two dimensional turbulence
}
The standard theory of  2D turbulence conjecturally describes the behavior of solutions of
\be\la{NSY}
\pd{t}\bfU+ \bfU \cdot \nabla \bfU +{\nabla p\over \rho}-\nu \dd \bfU +\Ym \bfU= \boldsymbol{f}\,,
\ee
where $\boldsymbol{f}=\boldsymbol{f}(x)$ is a ``sufficiently generic'' smooth vector field, which is on the Fourier side supported in a few relatively low Fourier modes.\footnote{The assumption of ``sufficient generiticity'' is important, as we need that the system creates enough ``chaos'' for low $\nu$.}
One can expect that for many $\boldsymbol{f}$ the system will become ``turbulent'' for sufficiently low $\nu$, although it is important to keep in mind that there are examples where this is not the case, see~\cite{Marchioro87,ConstantinRamos07}. In the turbulent regime one expects the famous downward cascade of energy  together with an upward cascade of vorticity, as conjectured by Kraichnan~\cite{Kraichnan67} and Batchelor~\cite{Batchelor69}. this is why we need the operator $\Ym$. 
In dimension $n=3$ the operator $\Ym$ should not be needed and we expect the so-called Kolmogorov-Richardson energy cascade.
The striking feature of these phenomena is that, conjecturally, as $\nu\to0_+$, the velocity field $\bfU$ should satisfy some bounds independent of $\nu$, such as
\be\la{linfbound}
\| \bfU \|_{\linf}\le C,\qquad \hbox{$C$ independent of $\nu$\,.}
\ee
Moreover, there are conjectures as to how energy will be distributed in the Fourier modes (see, for instance, the classical works \cite{KraichnanMontgomery80, MillerEtAl92, Frisch95, Tsinober01, FoiasJollyManleyRosa02,Tabeling02}). Rigorous treatment of these scenarios seem to be our of reach of the present-day techniques and we have nothing new to say in this direction.   Note however that the following 1D model given by the Burgers equation 
\be\la{Burgers}
\pd{t}u+u u_x-\nu u_{xx}=f(x),\qquad x\in \Real/{\ZZ},\qquad \int u(x,t)\,dx\,\,=0, \qquad\int f(x)\,dx\,\,=0,
\ee
is treatable. The behavior of the solutions for $\nu\to 0_+$ can be studied in detail via the Cole-Hopf transformation. In particular, the bound~\rf{linfbound} can be established in this case.

Instead of relying on the chaos produced by the presumably complicated dynamics of~\rf{NSY} for low $\nu$, we can input ``genericity'' into the system
by considering a ``random'' $\boldsymbol{f}$. This point of view may be traced back to Novikov~\cite{Novikov1965} (see also, e.g.~\cite{BensoussanTemam,VishikKomechFusikov1979}). 
We may take for example
\be\la{randomf}
\boldsymbol{f} (x,t)=\alpha \sum_k b_k \boldsymbol{e}_k(x)  \dot W^k(t)\,,
\ee
where $\boldsymbol{e}_k(x)$ is given by \eqref{basis}, the sum is finite, over a few relatively low modes,  $\alpha$ is a suitable constant of order $1$. The $W^k$ are independent copies of the standard Wiener process (Brownian motion) so that $\dot{W}^k$ are white noise processes and hence are stationary in time. After a suitable non-dimensionalization, a representative case of \eqref{randomf} is when $\sum_k |b_k|^2$ and $\alpha$ are both of order unity.\footnote{If we wish to consider dimensions of the various quantities, one natural  choice seems to be to take $\boldsymbol{e}_k$ dimension-less, $b_k$ of the same dimension as $\bfU$ (which is $[\rm length][\rm time]^{-1}$), and $\alpha$ of dimension $[\rm time]^{-\pul}$, so that $\alpha \dot W^k(t)$ has dimension $[\rm time]^{-1}$.} 

With a random forcing of the form~\rf{randomf}, the equation~\rf{NSY} can then be viewed as a stochastic equation. With some additional assumptions, there will exist a unique invariant measure $\mu=\mu_\nu$ for the process defined by~\rf{NSY}, see e.g. \cite{FlandoliMaslowski1, ZabczykDaPrato1996,
Mattingly1,Mattingly2, BricmontKupiainenLefevere2001, KuksinShirikyan1,KuksinShirikyan2, Mattingly03,MattinglyPardoux1,
HairerMattingly06, Kupiainen10, HairerMattingly2011, Debussche2011a, KuksinShirikian12} and containing references. Relations conjectured by Kraichnan's theory would then be satisfied in a suitable mean value sense. The benefit of working with the random forcing is that even though we still cannot make much progress on establishing Kraichan's conjectures in this setup,  we now at least have a quite canonical object for our analysis, the measure $\mu$.\footnote{The case of the Burgers equation~\rf{Burgers} with stochastic forcing can be analyzed rigorously, see for example~\cite{EKhaninSinai00}.} 
Indeed, the above mentioned works establish ergodic or even mixing properties of $\mu$. These properties  provide some theoretical justification for the measurement of the physical quantities described in  turbulence theories. In the deterministic case the measure $\mu$ should presumably by replaced by a suitable invariant measure on the attractor, see e.g.~\cite{ConstantinFoias88, FoiasManleyRosaTemam01}.

One can of course go through similar considerations in three dimensions, but in that case the lack of rigorous results concerning the basic existence and uniqueness questions for the Navier-Stokes solutions prevents obtaining rigorously the measure $\mu$ above (or its analogues on the purported attractors in the deterministic case). Note however the recent works \cite{DaPratoDebussche2003, FlandoliRomito2008, Debussche2011a} on weaker notions of solutions and associated invariant measures.

\subsection{Kuksin measures}
Kuksin~ \cite{Kuksin2004}, see also~\cite{KuksinPenrose2005, Kuksin06, Kuksin2006,Kuksin2007, Kuksin2008, Shirikyan11, KuksinShirikian12}, suggested to study of a different limiting regime and put in~\rf{randomf}
\be\la{ks}
\alpha=c\sqrt{\nu}\,,
\ee
where $c$ is a constant independent of $\nu$.\footnote{If our quantities are not dimensionless and we use the same dimension count as in a previous footnote, then $c$ should have dimension $[\rm length]^{-1}$.} Assume we have this scaling and omit the terms $\bfU \cdot \nabla \bfU, \Ym$ from the equation. Then for each Fourier mode $u_k$ we have
\be\la{lin}
\E(|u_k|^2)={|b_k|^2\alpha^2\over |k|^2\nu}=|b_k|^2{c^2\over |k|^2}\,,
\ee
a bound independent of $\nu$. In the non-linear case a similar bound can be obtained, see~\cite{Kuksin2004, KuksinShirikian12} and~\rf{est1}. With scaling~\rf{ks} the operator $Y$ is no longer needed\footnote{Indeed in
this scaling, \eqref{ks}, the term $Y$ leads to a trivial limiting measure $\mu$ as we establish rigorously in Section~\ref{sec:DampScaling} below.}
and the invariant measures $\mu=\mu_\nu$ of the process generated by the equation
\be\la{eks}
d \bfU+ \left(\bfU \cdot \nabla \bfU +{\nabla p\over \rho}-\nu \dd \bfU \right)dt= c\sqrt{\nu}\sum _k b_ke_k(x) d W^k(t)
\ee
have a meaningful limit (perhaps after passing to a suitable subsequence). At the level of the vorticity $\om=\curl u$ we have
\be\la{nsom}
d \Vort + \left(\bfU \cdot \nabla \Vort -\nu \dd \Vort \right)dt = c\sqrt{\nu} \sum_k g_k  dW^k(t) e^{ikx}, \qquad g_k=|k|\,b_k\,,
\ee
which is the form which we will mostly work with.\footnote{The term $-\nu \dd \Vort$ can be replaced by more general dissipation, such as fractional Laplacian $\nu (-\dd)^{\alpha}$; see Remark~\ref{rem:fractional} below.} A deterministic version of the situation considered by Kuksin would correspond to setting
\be\la{deterministic}
f=c\nu \tilde f
\ee
in~\rf{NSu}. This situation is relevant for Section~\ref{sec:deterministic}.

Let $\mu_\nu$ be an invariant measure on the space of the vorticity functions $\Vort$ of the process defined by~\rf{nsom}.  Note that
for a sufficiently fast decay in the $b_{k}$'s in \eqref{eks}, these measures $\mu_{\nu}$ are supported on
smooth functions, see e.g. \cite{KuksinShirikian12}.  Also, under rather general assumptions on these $b_{k}$'s such a measure is unique, 
cf.~references above. Applying Ito's formula
\be\la{Ito}
d \intt \pul \Vort^2\,dx=\intt \left(\Vort d\Vort + \tfrac{1}{2} d\Vort d\Vort\right)\,dx
\ee
and taking the expectation, we obtain
\be\la{est1}
\E \left(\intt |\nabla \om|^2\,dx\right) =\int \|\nabla \om\|^2_{L^2}\,d\mu_\nu(\om)=\frac{c^2}{2}\sum_k  |g_k|^2\,.
\ee
Due to this bound, as $\nu\to0_+$, the family of measures $\mu_\nu$ has a subsequence converging weakly\footnote{More precisely we have that $\int_{L^{2}} f(\Vort) d \mu_{\nu_{j}}(\Vort)$ converges to
$\int_{L^{2}} f(\Vort) d \mu_0(\Vort)$ for every continuous, bounded real valued test function $f$. In fact the convergence holds also in $H^{1 - \varepsilon}$ for any $\varepsilon$ positive.} to a limit $\mu_0$, which is a measure supported in the space
\be\la{space}
H^1= \left\{\Vort \in H^1(\TT),\,\,\intt \Vort \,dx = 0\right\},
\ee
with
\be\la{est2}
\E(||\nabla \om||^2_{L^2})=\int \|\nabla \om\|^2_{L^2}\,d\mu_0(\om)\le \frac{c^2}{2} \sum_k |g_k|^2\,.
\ee
Kuksin~\cite{Kuksin2004} proves that this measure invariant for the evolution given by Euler's equation~\rf{euler}. See also~Kuksin and Shirikian~\cite{KuksinShirikian12} and references therein for many other interesting properties, such as the non-triviality of the measure.
We will call the measures $\mu_0$ constructed in this way {\it Kuksin measures}.

\subsection{Main Result on Kuksin measures and the dynamical systems approach to 2D Euler}
\label{sec:main:result}
We now show that the Kuksin measures are closely related to the long-time behavior of solutions of Euler's equation.
One of our main  results in this paper will be the following:
\begin{theorem} [{\bf Kuksin measures are supported on $L^\infty$}]
\label{thm:main:intro}
Let $\mu$ be a Kuksin measure as above. Then
\be\la{main}
\int ||\om||_{\linf} \,d\mu_0(\om) < +\infty\,.
\ee
\end{theorem}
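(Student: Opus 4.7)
The plan is to prove the uniform-in-$\nu$ bound
\be
\sup_{\nu>0} \int \|\Vort\|_{\linf}\, d\mu_\nu < +\infty
\ee
and then pass to the inviscid limit. Since the footnote records that $\mu_\nu \to \mu_0$ weakly on $L^2$, and since $\Vort \mapsto \|\Vort\|_{\linf}$ is $L^2$-lower semicontinuous (an $L^2$-convergent subsequence admits an a.e.-convergent sub-subsequence, so $\|\cdot\|_{\linf}$ passes to the limit by the essential-sup definition), the Portmanteau theorem gives $\int \|\Vort\|_{\linf}\, d\mu_0 \le \liminf_{\nu \to 0_+} \int \|\Vort\|_{\linf}\, d\mu_\nu$, which reduces \eqref{main} to the displayed uniform bound.

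The key input is a family of truncated dissipation balances. For $K \ge 0$ and an even integer $p \ge 2$, apply It\^o's formula to the functional $\Vort \mapsto \tfrac{1}{p}\intt (\Vort - K)_+^p\, dx$ along \eqref{nsom}; stationarity of $\mu_\nu$ together with incompressibility of $\bfU$ (which kills the convective term) yields the $\nu$-independent identity
\be
\int \Big(\intt (\Vort - K)_+^{p-2}\,|\nabla \Vort|^2\, dx\Big)\, d\mu_\nu \,=\, \frac{c^2 \|g\|^2}{2} \int \Big(\intt (\Vort - K)_+^{p-2}\, dx\Big)\, d\mu_\nu.
\ee
The cancellation of $\nu$ between the viscous dissipation and the quadratic variation of the noise is the algebraic signature of the Kuksin scaling \eqref{ks}; for $K = 0,\ p = 2$ the identity reduces to \eqref{est1}, while for $K > 0$ it constrains both the $\mu_\nu$-expected measure of the super-level set $\{\Vort > K\}$ and the gradient energy concentrated on it.

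With this family in hand I would execute a De Giorgi--style iteration on a dyadic sequence of thresholds $k_n \nearrow K_\infty$. Chebyshev, seeded by the base enstrophy bound \eqref{est2}, controls $|\{\Vort > k_n\}|$ in terms of $\mathbb{E}_{\mu_\nu}\|(\Vort - k_{n-1})_+\|_{L^2}^2 / (k_n - k_{n-1})^2$; the truncation identity at level $K = k_n$ controls $\mathbb{E}_{\mu_\nu}\|\nabla (\Vort - k_n)_+\|_{L^2}^2$; and a sharp 2D Sobolev/Gagliardo--Nirenberg inequality for functions supported on a set of small measure combines these inputs into a super-linear recursion $Y_{n+1} \le C \mu^n Y_n^{1+\alpha}$ for $Y_n := \mathbb{E}_{\mu_\nu}\|(\Vort - k_n)_+\|_{L^2}^2$. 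Choosing $K_0$ large enough to seed the recursion --- the critical threshold is $\nu$-independent because every identity is --- forces $Y_\infty = 0$, i.e.\ $\Vort \le K_\infty$ almost surely, with $K_\infty$ independent of $\nu$. Applying the same scheme to $-\Vort$ completes the uniform bound.

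The hard part will be closing this iteration in dimension two, where the critical Sobolev failure $H^1 \not\hookrightarrow \linf$ prevents a naive De Giorgi step based only on \eqref{est2} from contracting. The remedy is that the truncated identities at \emph{all} threshold levels collectively supply the extra factor of super-level-set measure needed to offset the borderline failure of the 2D Sobolev embedding (via its Moser--Trudinger refinement). Balancing the geometric decay of $|\{\Vort > k_n\}|$ against the polynomial-in-$p$ loss in the Sobolev constants, and collecting enough higher moments of $\|(\Vort-k_n)_+\|_{L^2}$ --- which requires using the family of identities above at several values of $p$ to promote first-moment information to the second moments needed for the Cauchy--Schwarz steps in the recursion --- is the delicate point of the argument.
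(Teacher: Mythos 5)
Your reduction of \eqref{main} to a uniform-in-$\nu$ bound on $\int\|\Vort\|_{L^\infty}\,d\mu_\nu$, followed by lower semicontinuity of $\|\cdot\|_{L^\infty}$ along the weakly convergent subsequence, is exactly the paper's second step (Theorem~\ref{thm:support}, proved there with mollifiers and Fatou rather than a.e.\ subsequences, but to the same effect). Your family of truncated stationarity identities is also correct: the transport term vanishes by incompressibility, and the factor $\nu$ cancels between the viscous dissipation and the It\^o correction, so the identities are indeed $\nu$-independent.

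The De Giorgi iteration you build on them, however, cannot close, because its target conclusion is false: you aim to show $\Vort\le K_\infty$ almost surely under $\mu_\nu$ with $K_\infty$ independent of $\nu$, but the stationary measures have unbounded support in $L^\infty$. Remark~\ref{rem:stationary} of the paper exhibits explicit stationary solutions $\Vort_S^\nu(t,x)=\Vort_E(x)\,z_S(t)$ with $z_S$ a nondegenerate Gaussian, so $\|\Vort_S^\nu\|_{L^\infty}=|z_S|\,\|\Vort_E\|_{L^\infty}$ exceeds every fixed level with positive probability; only the first moment $\E\|\Vort_S^\nu\|_{L^\infty}$ is finite, which is precisely what Theorem~\ref{thm:Moser} asserts. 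The concrete step where your recursion breaks is the passage to expectations: the De Giorgi nonlinearization bounds $\|(\Vort-k_{n+1})_+\|_{L^2}^2$ by a product of the Sobolev-controlled gradient energy on the super-level set and a power of that set's measure, and these are strongly positively correlated random variables (both are large exactly on the rare events where the noise drives $\Vort$ up). Hence $\E[AB]$ is not controlled by $\E A\cdot\E B$, and the Cauchy--Schwarz repair needs $\E[A^2]$ --- a genuine second moment over the probability space of the random gradient energy, which your identities (at any $p$ and any $K$) do not supply, since they are all first moments over the probability space of $x$-integrals. The Gaussian example shows no repair can succeed: a closable superlinear recursion with $\nu$-independent constants would force $Y_\infty=0$, contradicting the positive Gaussian tail. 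The paper sidesteps this entirely by running a \emph{linear} Moser recursion $A_{k+1}\le a_k A_k$ directly on the expectations $\E\bigl(\|\Vort\|_{L^{2p_k}(I_k;L^{p_k})}\vee\|\sigma\|_{L^\infty}\bigr)$, controlling the martingale with Burkholder--Davis--Gundy and using that $\prod_k a_k<\infty$ thanks to \eqref{eq:BDGCons}; no smallness or seeding is needed, consistent with the fact that only a first-moment bound is true. It also works with the time-rescaled equation \eqref{rescaled} and exploits parabolic smoothing over an interval $[T,2T]$ rather than a single-time elliptic argument.
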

\noindent We shall discuss the outline of the proof of Theorem~\ref{thm:main:intro} in Subsection~\ref{sec:Moser:intro} below. A detailed statement of the result and its proof is found in Section~\ref{sec:Moser}.

In particular, from \eqref{main} we see that $\mu$ is supported on $L^\infty$. This is important, as the space $\linf$ is probably the most natural space (for the vorticities) in which to consider the 2D Euler equation when studying the long-time behavior of the solution. This is due to the following facts:

\begin{enumerate}
\item The initial value problem for~\rf{euler} is well-posed for in $L^\infty$, a classical result by Yudovich \cite{Yudovich1963}.
\item Let $R\ge 0$ and let $X=X_R= \left\{\om\in \linf, \,\intt \om=0, \,||\om||_{\linf}\le R \right\}$. Equipped with the \weak topology, the set $X$ is a compact metric space, which we will denote by $(X,w^*)$. One can check that the proof of Yudovich's theorem actually give a stronger result: namely, {\sl the Euler equation~\eqref{euler} gives a well-defined dynamical system on $X$ (for any $R>0$). }  A proof of Yudovich's theorem which can be easily adapted to prove our statement here can be found for example in~\cite{MajdaBertozzi2002}.
\end{enumerate}
From Theorem~\ref{thm:main:intro} we hence see that Kuksin measures (restricted to $X$) give natural invariant measures for the Euler evolution on $X$. The functions on which the measures are  supported have additional $H^1_0-$regularity. 

Note that one can also construct non-trivial measures on $X$ which are invariant under the Euler equation directly: we know that
the energy functional\footnote{More precisely, energy per unit mass.}
\be\la{energy}
\EE(\om)=\inta\pul |u|^2\,dx=\inta -\pul\psi \om\,dx
\ee
is continuous on $(X,w^*)$. Therefore the energy level sets  $X_E=X_{R,E}$ given by $\{\om\in X, \EE(\om)=E\}$ are compact subsets in $X$ which are invariant under the Euler equation (due to the energy conservation). By the classical Kryloff-Bogoliouboff procedure, every non-empty $X_{R,E}$ supports an invariant measure. This measure cannot be trivial when $E>0$ i.e. supported at $\Vort=0$.
There are additional conserved quantities for the evolution by Euler's equation, namely the integrals
\be\la{Fint}
I_F(\om)=\inta F(\om)\,dx\,,
\ee
but these quantities are not continuous on $(X,w^*)$, making their implications for the dynamics on $(X,w^*)$ more subtle. 

In Section~\ref{sec:long:2D:Euler} we discuss various hypothesis for the long term dynamics of the \eqref{euler} on the phase space $(X,w^*)$.  These
hypothesis may illuminate further possible structure of the support of the invariant measures $\mu_0$.   Two extreme
scenarios present themselves.  On, the one hand, taking the view of statistical mechanics, we may predict long time behavior from 
the maximization of various notions of ``entropy'' subject to the constrains of the Eulerian dynamics. 
In many cases these ``entropy maximizers'' may have a fairly simple shear flow like structure.
Thus in this scenario we would expect that the Kuksin
measures would be supported on steady flows with a relatively simple topology.
At the other extreme we might suppose that all of the solution trajectories of the Euler dynamical system are pre-compact
in $L^2$.   In this case many of the Casimirs \eqref{Fint} must be conserved at the end-states which would suggest that
$\mu_0$ has a much richer structure.

There is some evidence for both of the above scenarios. On the one hand we do not have a single example where it is proved that an initial condition
 yields an orbit which is not precompact in $L^2$.  Moreover a recent result in \cite{SverakNotes} (which we recall in Theorem~\ref{thm:SverakPrecompact} below) rigorously shows that at least some such precompact orbits must exist.  On the other hand recent numerical result of \cite{BouchetSimonnet09}
 suggest that $\mu_0$ is concentrated on certain laminar states obtained as an ``Entropy maximization''.    It seems unlikely that either of these
 scenarios holds universally and that the structure of $\mu_0$ is given by an intermediate situation.


\subsection{Moser iteration for SPDE and applications to $L^\infty$ estimates for stationary solutions}
\label{sec:Moser:intro}
We now turn to discuss some aspects of the proof of Theorem~\ref{thm:main:intro}.  We will see that the main ingredients involve a suitable rescaling of the equations and then developing a Moser iteration scheme for SPDEs of drift diffusion type which evidences a parabolic regularization from $L^2$ to $L^\infty$. The detailed proofs are given below in Section~\ref{sec:Moser}. 

A natural rescaling of time makes the interpretation of the measures $\mu_\nu$ and the Kuksin measures $\mu_0$ perhaps more transparent.  If we replace the function $\bfU(x,t)$ by $\tilde \bfU(x, t) = \bfU(x, t/\nu)$ and replace $W^k(t)$ by the equivalent process $\tilde{W}(t) = \sqrt\nu W^k( t/\nu)$, we obtain, after dropping the tildes
  we obtain
\be\la{rescaled}
d \Vort + \left( {1\over \nu} \bfU \cdot \nabla \Vort -\dd \Vort \right) d t = c\sum _k  g_k d W^k(t)\,.
\ee
Note that the measure $\mu_\nu$ is also an invariant measure for this process. See Section~\ref{sec:finiteDim} below for some motivating discussion of analogous finite dimensional situations. 

As $\nu \rightarrow 0$ in \eqref{rescaled} the drift velocity $\nu^{-1}\bfU$ grows unboundedly.  As such to obtain Theorem~\ref{thm:main:intro} need find a way estimate $L^\infty$ norms of solutions to equations of the form 
\begin{align}
d \Vort + \left( \boldsymbol{a} \cdot \nabla \Vort -\dd \Vort \right) d \tau = c\sum _k  g_k d W^k, \quad \nabla \cdot \boldsymbol{a} =0
\label{eq:genddeqn}
\end{align}
with constants that {\em do not} depend on the {\em size} of the sufficiently smooth divergence free drift velocity $\boldsymbol{a}$.\footnote{Since the noise in \eqref{eq:genddeqn} is additive, one could shift the equation by subtracting the solution of an Ornstein-Uhlenbeck process, but the $L^\infty$ bounds one obtains on the resulting random PDE appear to depend essentially on the size of the drift velocity.}

In the deterministic case, one usually obtains such drift-independent $L^\infty$ bounds either by appealing to maximum principle-type arguments, or by using $L^p$ estimates, with $p$ independent bounds, and passing $p\to \infty$.  Neither of these direct approaches appear to be available in the stochastic case. The first approach seems to fail since one cannot exchange $\E$ and $\sup_x$, and due to the lack of smoothness in time of the stochastic terms in \eqref{eq:genddeqn}.
On the other hand, for $L^p$ bounds, a direct application of the It\=o lemma  to \eqref{eq:genddeqn} yields
\begin{align}
  d \| \Vort\|_{L^p}^p = \left(p \langle \Delta \Vort, \Vort |\Vort|^{p-2}\rangle +  \frac{p(p-1)}{2} \sum_k \| g_k |\Vort|^{(p-2)/2}\|_{L^2}^2 \right) dt + p \sum_k \langle g_k, \Vort |\Vort|^{p-2}\rangle dW^k \label{eq:ito:intro}
\end{align}
where we have used that $\boldsymbol{a}$ is divergence-free.
The It\=o correction term in \eqref{eq:ito:intro} grows quadratically in $p$, which is too fast. On the other hand, letting $X = \|\Vort\|_{L^p}^p$, one may apply It\=o's lemma to $\phi(X) = (1+X)^{2/p} / p$
(see e.g. \cite[Remark 5.2]{Krylov2010}) and prove using standard estimates that
\begin{align*}
\sup_{p\geq 2} \left( \E \sup_{t\in[0,T]} \frac{\|\Vort(t)\|_{L^p}^2}{p} \right) \leq C \left( 1 + \sup_{p\geq 2} \left( \E \frac{\|\Vort_0\|_{L^p}^2}{p} \right)  + \nu T \|\sigma\|_{L^\infty} \right)
\end{align*}
where $\Vort$ is the solution of \eqref{eq:SNSEVort}. This however does not yield a bound on $\E \left(\sup_{t\in[0,T]} \sup_{p\geq 2} \|\Vort (t)\|_{L^p}^2/p\right)$.  

Since \eqref{eq:genddeqn} is a parabolic SPDE, in the spirit the classical DeGiorgi-Nash-Moser~\cite{DeGiorgi57,Nash58,Moser1960} theory for deterministic parabolic PDEs, one may expect an instant regularization of the solution. The difficulty in carrying over this program lies in treating the stochastic forcing term in \eqref{eq:genddeqn} and obtaining bounds which are independent of the size of drift velocity $\boldsymbol{a}$.  In the deterministic case, for drift velocities that are divergence free, one obtains the $L^2$ to $L^\infty$ regularization of solutions to the parabolic equation, with bounds that are {\em independent} of the drift using e.g. the elegant argument of Nash~\cite{Nash58}.  Drift independent bounds for a deterministic analogue of \eqref{eq:genddeqn} have also been obtained using Moser iteration, see, e.g. \cite{Kukavica99}. Therefore, one may expect that the same result holds for  stochastic drift-diffusion equations such as \eqref{eq:genddeqn}. 

In order to treat the stochastic term, it turns out that the iteration technique introduced by Moser~\cite{Moser1960} is better suited in view of the $L^p$ It\=o formula~\eqref{eq:ito:intro}. This fact was recently observed in the context of semilinear SPDE in~\cite{DenisMatoussiStoica2005,DenisMatoussiStoica2009} where the authors obtain an $L^\infty$ maximum principle.  We however cannot appeal to these results since they rely essentially on the fact that the initial data already lies in $L^\infty$.  By~\eqref{est1} we only have $\nu$-independent $H^1$ bounds on the statistically steady solutions of \eqref{nsom}. To overcome this difficulty we prove in Theorem~\ref{thm:Moser} (see also Remark~\ref{rem:Linear} below) that the solution $\Vort(t)$ of \eqref{eq:genddeqn} lies in $L^\infty$ (in $x$) for arbitrarily small positive time $t$:
\begin{align}
\label{eq:Moser:intro}
  \E \sup_{t\in [T,2T]}\| \Vort(t) \|_{L^\infty}
\leq C\left(1 + T^{-5/4} \right)\E \left( \|\Vort \|_{L^4([0,2T];L^2)} \vee c \sum_k\|g_k\|_{L^\infty} \right),
\end{align}
for $T \leq 1/8$, where the constant $C$ is independent on $\boldsymbol{a}$. 
To the best of our knowledge the parabolic regularization estimate \eqref{eq:Moser:intro} is new in the context of SPDE. As in \cite{DenisMatoussiStoica2005,DenisMatoussiStoica2009}, 
one of the main differences between the stochastic Moser iteration (see the proof of Theorem~\ref{thm:Moser}) and the classical approach for deterministic PDE naturally arises due to the 
random forcing. In view of the Burkholder-Davis-Gundy inequality we need to bound quadratic variations of the Martingale on the right side of \eqref{eq:ito:intro}, and hence the integrability 
in time needs to be twice that in space in order to close the iteration scheme (cf.~\eqref{eq:Stochastic:Term} below).  

In view of the predictions made by Statistical Mechanics arguments regarding the ``end states'' of the 2D Euler dynamics, and having already established that the Kuksin measures are supported 
on $H^1(\TT) \cap L^\infty(\TT)$, we believe that:
\begin{conjecture}
 Kuksin measures are in fact supported on continuous vorticities.
\end{conjecture}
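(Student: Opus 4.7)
My plan is to upgrade the $L^\infty$ control of Theorem~\ref{thm:main:intro} to H\"older control: once $\int \|\Vort\|_{C^{\alpha}}\,d\mu_{0}(\Vort)<\infty$ for some $\alpha>0$, the embedding $C^{\alpha}\hookrightarrow C(\TT)$ yields the conjecture. I work throughout with the rescaled equation~\eqref{rescaled}, for which $\mu_{\nu}$ is also invariant, and propose to obtain the $C^{\alpha}$ bound uniformly in $\nu$ before passing to the inviscid limit.

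First, I would split the stationary vorticity as $\Vort = z + v$, where $z$ is the stationary Ornstein--Uhlenbeck process
\begin{equation*}
dz + \left( -\dd z + z \right) dt = c\sum_{k} g_{k}\, dW^{k}.
\end{equation*}
Since the noise is spatially smooth (finitely many modes, or rapidly decaying $g_{k}$), $z$ is $C^{\infty}$ in $x$ almost surely and each of its space--time Sobolev norms has finite moments uniform in $\nu$. The residual $v=\Vort-z$ then satisfies, pathwise, the \emph{deterministic} advection--diffusion equation
\begin{equation*}
\pd{t} v + \nu^{-1}\bfU \cdot \nabla v - \dd v \;=\; z - \nu^{-1} \bfU \cdot \nabla z,
\end{equation*}
where $\bfU$ is the divergence-free velocity recovered from $\Vort=z+v$ by Biot--Savart. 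Since $\div \bfU = 0$, this equation can be written in divergence form $\pd{t} v + \nabla \cdot (\nu^{-1}\bfU v - \nabla v) = z - \nu^{-1}\nabla\cdot(\bfU z)$.

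The next step would be to apply a DeGiorgi--Nash--Moser H\"older regularization estimate tailored to divergence-form parabolic equations with divergence-free drift. By Theorem~\ref{thm:main:intro}, $\Vort\in L^{\infty}$ with $\nu$-independent moments, so $\bfU$ is log-Lipschitz uniformly in $\nu$; combined with the pathwise smoothness of $z$, this makes the right-hand side of the $v$-equation pathwise bounded with moments uniform in $\nu$. The scale-invariant heat kernel estimates of Osada and the subsequent divergence-free drift literature then yield a H\"older estimate of the form $\|v(t_{1})\|_{C^{\alpha}}\le C(\|z\|_{C^{1}},\|\Vort\|_{L^{\infty}},t_{1}-t_{0})$ for some universal $\alpha>0$, with a constant that does \emph{not} depend on $\|\nu^{-1}\bfU\|_{L^{\infty}}$ and hence is independent of $\nu$. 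Taking expectations and passing to the weak$^{*}$ limit $\nu\to 0_{+}$ along a subsequence producing $\mu_{0}$ then gives $\int\|\Vort\|_{C^{\alpha}}\,d\mu_{0}(\Vort)<\infty$.

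The main obstacle will be obtaining a genuinely scale-invariant H\"older estimate for the $v$-equation. Although the divergence-free structure of $\bfU$ is the known mechanism for scale invariance in the deterministic theory, extracting a quantitative version sharp enough to tolerate the coupling $\Vort=v+z$ through Biot--Savart and to survive the inviscid limit appears delicate, since the drift $\nu^{-1}\bfU$ is unbounded as $\nu\to 0_{+}$ and one cannot afford any logarithmic growth in the drift size. A possibly cleaner alternative is to adapt the stochastic Moser iteration of Theorem~\ref{thm:Moser} directly in the spirit of DeGiorgi's oscillation lemma, iterating the $L^{p}$ It\^o formula~\eqref{eq:ito:intro} on shifted truncations $(\Vort - k)_{\pm}$ to produce a pathwise oscillation decay estimate, and thus H\"older continuity of $\Vort$ itself, without ever performing the splitting. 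Either route will have to contend with the quadratic-in-$p$ It\^o correction that motivated the square-root rescaling in Theorem~\ref{thm:Moser}, which is what forces the integrability in time to be twice that in space.
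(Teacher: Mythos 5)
The statement you are trying to prove is stated in the paper as a \emph{conjecture}: the authors explicitly say they do not know how to prove it, and the only rigorous evidence they offer is the elliptic toy model of Theorem~\ref{thm:elliptic:MOC}, which yields merely a logarithmic (not H\"older) modulus of continuity for genuinely time-independent solutions, together with the caveat that the analogy to statistically stationary solutions is ``tentative at best.'' So there is no proof in the paper to compare yours against, and your proposal should be judged on whether it closes the gap. It does not, because the step you lean on --- a DeGiorgi--Nash--Moser H\"older estimate for \eqref{parabolic:intro} with constants independent of the size of the divergence-free drift --- is precisely the missing ingredient the paper identifies. The drift-independence of the $L^\infty$ bound (Nash, and Theorem~\ref{thm:Moser} here) comes from the cancellation $\int \bfU\cdot\nabla\Vort\,\Vort|\Vort|^{p-2}\,dx=0$; no analogous cancellation rescues the oscillation-decay step of the H\"older proof, and the known scale-invariant results for divergence-free drifts (Osada and its descendants) require writing $b=\nabla\cdot A$ with $A$ antisymmetric and \emph{bounded}, with constants depending on $\|A\|_{L^\infty}$. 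In the rescaled equation~\eqref{rescaled} one has $A=\nu^{-1}\psi$, which blows up as $\nu\to0_+$, so those estimates degenerate exactly in the limit you need them. The paper's citation of \cite{SVZ12} is a warning that continuity can genuinely fail for divergence-free drifts that are too large or too rough, even while the $L^\infty$ maximum principle survives.

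Two further concrete problems. First, your Ornstein--Uhlenbeck splitting does not produce a uniformly forced residual equation: the source term $\nu^{-1}\bfU\cdot\nabla z$ (equivalently $\nu^{-1}\nabla\cdot(\bfU z)$ in divergence form) carries an explicit factor $\nu^{-1}$ that no amount of pathwise smoothness of $z$ removes, so the right-hand side of the $v$-equation is not bounded uniformly in $\nu$; this is the same issue the paper flags in its footnote about shifting by an Ornstein--Uhlenbeck process. Second, your alternative route via De Giorgi truncations $(\Vort-k)_{\pm}$ stalls at the same place: the first De Giorgi lemma ($L^2$-to-$L^\infty$ for truncations) is indeed drift-independent because $\int \bfU\cdot\nabla w\, w_{\pm}\,dx=0$, but the second (isoperimetric/intermediate-value) lemma that drives the oscillation decay uses the equation in a way where the drift does not cancel, and its constants depend on the drift size. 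Your proposal is a reasonable map of the obstructions --- and you correctly sense where the difficulty lies --- but it does not contain an idea that overcomes them, so the conjecture remains open as far as this argument goes.
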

The immediate difficulty which arises in proving this conjecture is that even in the deterministic case, for the two-dimensional linear parabolic equation
\begin{align}\label{parabolic:intro}
\partial_t v + {\boldsymbol{b}}(x,t) \cdot \nabla v - \Delta v = f, \quad \nabla \cdot {\boldsymbol b} = 0,
\end{align}
the size of the smooth drift comes into play for the DeGiorgi-Nash-Moser proof of H\"older regularity. If the drift is rough, one may even construct solutions that are not continuous 
functions for all time, although they obey the $L^\infty$ maximum principle (see, e.g.~\cite{SVZ12}).

On the other hand, one of the key ingredients of the proof of Theorem~\ref{thm:Moser} was the (statistical) stationarity of the solution to \eqref{rescaled}. 
As a deterministic toy problem one may hence consider time-independent solutions of \eqref{parabolic:intro},  with drift ${\boldsymbol b}(x)$. 
In this case, following the ideas in \cite{SSSZ12} and an elliptic Moser iteration we are indeed able to prove in Theorem~\ref{thm:elliptic:MOC} 
below that the solution obeys a drift-independent logarithmic modulus of continuity, and is hence uniformly continuous. The analogy between 
time-independent solutions to \eqref{parabolic:intro} and statistically stationary solutions of \eqref{eq:genddeqn} is however tentative at best.

\subsection{Inviscid limits for damped models; different scalings}

In view of the foregoing discussion concerning the Batchelor-Kraichnan theory of 2D turbulence, it appears that
when working on the periodic box the stochastic Navier-Stokes equations should be augmented
(as in \eqref{NSY}) with a suitable damping term $Y$ to prevent a pile up of energy
at large scales.  Note that the $Y$ term
also frequently appears in geophysical models closely related to the 2D Navier-Stokes equations
 to account for friction with boundaries. 
 In these situations, if the scaling
in this damping term is held fixed as $\nu \rightarrow 0$, then a different scaling must
be introduced for the noise in order to obtain a non-trivial inviscid limit in the class of the
associated invariant measures.     

To this end, we consider operators of the form
$Y = Y_{\tau, \gamma} = \tau \Lambda^{-\gamma} = \tau (-\Delta)^{-\gamma/2}$ and $\tau >0$, $\gamma \in [0, 1)$
and study weakly damped and driven stochastic Navier-Stokes equations of the form
\begin{align}
 d \bfU + (Y \bfU + \bfU \cdot \nabla \bfU + \nabla \Pres - \nu \Delta \bfU) dt = \nu^\alpha \rho dW, \quad \nabla \cdot \bfU =0,
 \label{eq:DampedSNSEIntro}
\end{align}
for different values of $\alpha \in \RR$.\footnote{As explained above it is of interest to consider $Y$ acting only at the largest scales; cf. \eqref{Yop}.  
This situation is more delicate to analyze rigorously and would seem to require the establishment of suitable ``hypocoercivity'' properties for \eqref{NSY}.} 
As above for the undamped case \eqref{est1} energetic considerations allow us to deduce the correct scaling with $\alpha$ in \eqref{eq:DampedSNSEIntro}. Consider a collection 
of invariant measures
$\{\mu_\nu^\alpha\}_{\nu > 0}$ for \eqref{eq:DampedSNSEIntro}.
Let $\bfU^\nu$  be stationary solutions of
 \eqref{eq:DampedSNSEIntro} corresponding to $\mu_\nu^\alpha$ and denote $\Vort^\nu = \nabla^{\perp} \cdot \bfU^{\nu}$.
 Applying the It\={o} lemma to the vorticity formulation of \eqref{eq:DampedSNSEIntro} and using stationarity one deduces that:
\begin{align*}
\E \left( \nu \| \nabla \Vort^\nu\|_{L^2}^2 + \|Y^{1/2} \Vort^\nu \|_{L^2}^2 \right) = \frac{\nu^{2\alpha}}{2} \|\sigma\|_{L_2}^2.
\end{align*}
Making use of the above relation, we will show below in Theorem~\ref{thm:different:scalings} that $\alpha =0$ is the only relevant scaling for \eqref{eq:DampedSNSEIntro}.
Here stationary solutions of a damped stochastic Euler equation arise.  See also \cite{BessaihFerrario12,Bessaih08}.

 {\bf Organization of the Paper.} In Section~\ref{sec:long:2D:Euler} we review some notions related to the time-asymptotic behavior of the 2D Euler equations.  Our discussions in this section allow us to make 
 some hypotheses regarding the structure of the support of Kuksin measures in this context.  Section~\ref{sec:properties} recalls the mathematical framework for the Navier-Stokes Equations and its associated
 Markovian semigroup.  We also review some properties of Kuksin measures established in previous works.  Section~\ref{sec:Moser} is devoted to the proof of the main theorem.  Here we detail the Moser iteration
 scheme which addresses a more general class of drift-diffusion equations.  Section~\ref{sec:deterministic} concerns a deterministic toy model for stationary solutions of the stochastic Navier-Stokes equations. 
 We establish a modulus of continuity for this system.  The final Section~\ref{sec:DampScaling} we consider a weakly damped stochastic Navier-Stokes equation and establish inviscid limits in the appropriate scaling for this model.

\section{Long term behavior of 2D Euler and related systems; connections to invariant measures}
\label{sec:long:2D:Euler}

In this section we discuss some aspects of the long time dynamics in $(L^\infty, w^*)$ of solutions to 2D Euler and the relation of this behavior to possible properties of the Kuksin measures, which are now accessible due to Theorem~\ref{sec:main:result}.
We begin with some motivation from finite dimensional Hamiltonian systems.
\subsection{Noise scaling limits in finite dimensions}
\label{sec:finiteDim}
A finite dimensional situation related to the above is studied in the theory of the small random perturbations of dynamical systems. Let
\be\la{DS}
\dot x = b(x)
\ee
be a dynamical system in $\RR^n$. Consider its stochastic perturbation
\be\la{DSs}
dx = b(x)\,d t+ \sqrt\ve\, QdW\,,
\ee
where $W=(W^1,\dots, W^n)$ are normalized independent Wiener processes and $Q$ is a matrix. By setting $x(t)=\tilde x(\ve t)$, $W(t) = {1 \over \sqrt {\ve}} \tilde{W}(\ve t)$ and $\tau=\ve t$ and dropping the tildes, we obtain
\be\la{DSsm}
dx={1\over \ve}b(x)\,d \tau +QdW.
\ee
Such systems have been extensively studied, see e.g. \cite{FreidlinWentzell12}. In case of measure-preserving flows, the behavior of \eqref{DSsm} as $\ve\to0_+$ can be understood from the following picture: under some assumptions
  the equation $\dot x = {1\over \ve} b(x)$ takes the trajectories  very quickly through ``ergodic components'', 
and hence for $\ve\to 0_+$ the system~\rf{DSsm} should in some sense describe a diffusion in the space of the ergodic components.

Equation~\rf{eks} (or its rescaled version~\rf{rescaled}) is of a slightly different nature that the perturbation of Hamiltonian systems considered in~\cite{FreidlinWentzell12}, in that we add not only a small noise, but also small damping. Such procedure can be illustrated by  a simple example of  the Langevin oscillator:
\begin{ex}[\bf The Langevin oscillator] We consider a simple 1d harmonic oscillator with damping and random forcing
\be\la{ho}
m\ddot q+\gamma \dot q +\kappa q=\alpha \dot w\,,
\ee
where $w$ is a normalized Wiener process and $m, \gamma, \kappa > 0$. Letting $p=mq$ as usual, it is easy to check that the (unique) invariant measure of the system
\be\la{ho2}
\begin{array}{rcl}
\dot q & = &  {p\over m}\\
\dot p & = & -\kappa q - {\gamma\over m}p+\alpha \dot w
\end{array}
\ee
is given by the \emph{Gibbs measure}
\be\la{invariant}
d \mu = \frac{1}{Z} e^{-\beta H(q,p)}\,dq\,dp\,,
\ee
where the Hamiltonian $H$ given by
\be\la{defs}
H={p^2\over 2m} + {\kappa q^2\over 2}, \qquad \beta = {\gamma\over \alpha^2},
\ee
and $Z= Z(\beta, \kappa, m)$ is a suitable normalizing constant.
We see that from the point of view of Statistical Mechanics the quantity $\alpha^2\over \gamma$ corresponds to (a multiple of) temperature. A similar calculation can be done for a general one dimentional Hamiltonian of the form
\be\la{Ham}
H={p^2\over 2m} +V(q)\,.
\ee
\end{ex}

In higher dimensions one can also calculate further examples; especially when the Hamiltonian is quadratic. The invariant measure does not necessarily have to be the Gibbs canonical measure as in~\rf{invariant}. If the damping and the forcing are taken to $0$ with the analogue of the ratio ${\alpha^2\over \gamma}$ converging to a limit, the invariant measure will converge to an invariant measure of the original Hamiltonian system. For example, in the case of a completely integrable n-dimensional system with the full system of mutually commuting integrals of motion $f_1,\dots, f_n$ the limiting invariant measure can be expected to be of the form
\be\la{ginv}
 d\mu = \frac{1}{Z}e^{-\phi(f_1,\dots, f_n)}\,dq_1\dots dq_n\,dp_1,\dots,dp_n\,,
\ee
where the function $\phi$ will depend on specific choices of damping/forcing. We see that the vanishing damping/random forcing method can be viewed as a way of producing suitable statistical ``ensembles'', closely related to those used in Statistical Mechanics. Considerations in this direction in the context of the KdV equation can be found in~\cite{Kuksin2007}.
In the terms of Statistical Mechanics the ensembles produced by this method are related to ``canonical ensembles''. One can also consider the ``micro-canonical ensembles'', which in the last examples would simply be given (under some ``genericity assumptions'') by  invariant measures on the tori
\be\la{tori}
f_1=c_1, \,\,f_2=c_2,\,\,\dots, f_n=c_n\,.
\ee
Under suitable assumptions, invariant measures on these tori are the ``irreducible components'' of the measures~\rf{ginv}.
By analogy, we see that Kuksin measures should be related to the Statistical Mechanics of Euler's equation. Their decomposition into irreducible components should give ``ergodic components'' of the Euler evolution. However, this analogy may break down due to infinite dimensional effects.  As $\nu\to 0_+$, it is conceivable that fast Euler evolution moves enstrophy to high spatial frequencies  (in the Fourier spectrum), so that ``complexity'' is lost (by disappearing to infinity in the Fourier space) and  Kuksin measures may conceivably be supported on some relatively simple sets, perhaps even equilibria. This would be an infinite-dimensional effect,\footnote{The effect is closely related to Landau damping, see for example~\cite{MouhotVillani11}.} which does not have an analogy for finite dimensional or completely integrable hamiltonian systems. This is discussed in more details below, but still at a heuristic level. We were not able to obtain rigorous results in this direction.

\subsection{Long-time behavior of solutions of Euler's equation}
Equation~\rf{rescaled} together with some analysis of the long-time behavior of solutions of Euler's equation seems to give some good hints about what one should expect concerning some of the properties of Kuksin measures. We recall some of the expected properties of the Euler solutions.

We consider equation~\rf{euler}, with the conventions~\rf{divcurl}. We also recall the obvious identity
\be\la{om0}
\intt \om(x,t)\,dx=0\,,\qquad t\in \RR\,.
\ee
Let us start with some classical observations about the long-time behavior of the solutions of~\rf{euler} starting from initial data $\om_0\in \linf$, with $||\om_0||_{\linf} = R$. Let $X=X_R$ be the ball of radius $R$ in $\linf$, taken with the \weak topology. In addition, we can impose the constraint $\intt \om\,dx=0$ on the functions in $X$. The space $(X,w^*)$ is a metric space and, as already discussed, the Euler evolution~\rf{euler} gives a well-defined dynamical system on $X$. We denote the $\Omega$-limit sets by
\be\la{omlimit}
\Om_+=\Om_+(\om_0)=\cap_{t>0} \overline{\{\om(s), \,\,s\ge t, \om(0)=\om_0\}}^{\,w^*}\,.
\ee
We also introduce
\be\la{orbits}
\orb=\{\om_0\circ h,\,\,\hbox{$h\colon \TT \to \TT$ is a volume-preserving $C^1$-homeomorphism}\}
\ee
and
\be\la{orbitse}
\orbe=\orb\cap\{\om\,,\,\,\EE(\om)=E\}.
\ee
It is not hard to see that the \weak closure of $\orb$, denoted by $\orbc$ is a closed convex subset of $L^p$ for any $p\ge 1$. Letting $\EE(\om_0)=E$, we clearly have
\be\la{sub}
\Om_+\subset\orbec\,.
\ee
There are various conjectures concerning the long-time behavior of Euler solutions motivated by the notion of ``mixing'', see~\cite{Miller90, Robert91, Shnirelman93, SverakNotes}. We can think of the fluid as consisting of fluid particles, with each fluid particle having a fixed value of vorticity permanently attached to it. The fluid motion mixes these particles, with the vorticity remaining attached to each particle. The most naive conjecture could be that for large times the vorticity is everywhere mixed, corresponding to the \weak convergence of $\om(t)$ to $0$ as $t\to\infty$. This would mean\footnote{This presumably happens if we consider the Burgers equation with the scaling~\rf{eks}. It should not be hard to verify that for the Burgers equation the Kuksin measures are trivial.}
\be\la{naive}
\Om_+=
\{0\}.
\ee
In the Fourier space this would correspond to the movement of all energy\footnote{It would be more precise to say {\it enstrophy}, but in the situation here this does not make a difference.} of the solution $\hat \om_k(t)$ towards larger and larger frequencies as $t\to\infty$.
 If $E\ne 0$, then~\rf{sub} provides an obstacle to this. The energy cannot all move to high (spatial) frequencies, as the energy functional $\EE$ is \weakly continuous. We can ``fix'' this by trying to ``move'' as much as possible energy to high frequencies which is still compatible with~\rf{sub}. More specifically, we can try to minimize
 \be\la{enstr}
 I(\om)=\inta |\om|^2\,dx=\sum_k |\om_k|^2 
 \ee
subject to the constraint
\be\la{constr}
\om\in\orbec\,.
\ee
Note that $I(\om)$ is preserved during the evolution, but it can conceivably drop on the ``end-states'' $\Om_+$, as it is not \weakly continuous.
 
 Minimizing $I$ (subject to~\rf{constr}) is of course the same as maximizing $-I(\om)$ subject to~\rf{constr}. More generally, we can take a concave function $F$ and maximize
$I_F(\om)$ given by~\rf{Fint}, subject to~\rf{constr}.
The quantify
$I_F(\om)$ can be called the {\it entropy} of the ``configuration'' $\omega$ and the above principle is then nothing but the usual entropy maximization under given constraints, as well-known from Statistical Mechanics. The entropy $I_F$ is could be considered as too simple, the usual entropy in Statistical Mechanics is based on suitable ``counting of states''.  Closely related to the notion of entropy is A. Shnirelman's notion of mixing in~\cite{Shnirelman93}.

There are indeed more sophisticated notions of entropy, see for example~\cite{Miller90, Robert91, Turkington99, SverakNotes}, which can be more ``non-local'' than the $I_F$ above. For example, let $\om_0=\sum_la_l\chi_{A_l}$, where $A_l$ is a division of $\TT$ into disjoint measurable sets with $|A_l|=\varkappa_l|\TT|$, and $\sum_la_l\varkappa_l=0$. Then, the closure of $\orb$ defined in \eqref{orbits} is
\be\la{e1}
\orbc= \left\{\om:\,\,\om(x)={\sum_l a_l\rho_l(x), \quad 0\le\rho_l\le 1, \quad \sum_l\rho_l=1}\right\}
\ee
and one can define the entropy (generated by $\om_0$) as
\be\la{e2}
S(\om)=S_{\om_0}(\om)=\sup\,\, \left\{\inta \sum_l -\rho_l\log \rho_l \,dx: \quad\om(x)=\sum_l a_l\rho_l(x), \quad 0\le\rho_l\le 1, \quad \sum_l\rho_l=1\right\}.
\ee
This entropy leads to the theories of Miller and Robert,~\cite{Miller90, Robert91}. When the division $\TT=\cup_l A_l$ has only two sets $A_1$ and $A_2$, then this entropy is of the form $I_F$ for a suitable $F$. For example, when we only have two sets and $a_1=-a_2=1$, then
\be\la{e3}
F(\om)=-\left({1+\om\over 2}\right)\log\left({1+\om\over 2}\right)-\left({1-\om\over 2}\right)\log\left({1-\om\over 2}\right)\,.
\ee

The maximizers of the entropy subject to given constraints are steady-state solutions of Euler's equations 
given by stream functions $\psi$ satisfying
\be\la{stst}
\dd\psi=H(\psi)
\ee
for a suitable function $H$ depending on $I_F$. These should be the ``end-states'' of the evolution if the actual evolution and Statistical Mechanics lead to the same conclusions.

As is usually the case with predictions based on Statistical Mechanics considerations, it is difficult to decide whether the actual dynamics of the equation produces the behavior expected from entropy maximization, assuming all known the constraints have been taken into account. In fact, we do not have a single example which in which it would be rigorously established that the trajectory
\be\la{o1}
\Trj=\cup\{\om(s),\,\,s\ge t\}
\ee
is not pre-compact in $L^2$ (and hence any $L^p$ for $p\in [1,\infty)$).
On the other hand, it is useful to recall the following result from \cite{SverakNotes}:
\begin{theorem}[{\bf Existence of $L^2$ precompact orbits \cite{SverakNotes}}]
\label{thm:SverakPrecompact}
{\sl The omega-limit set $\Om_+$ of any trajectory always contains an element $\tilde\om_0$ whose trajectory is pre-compact in $L^2$.}\footnote{The proof of this statement is very simple: maximize some entropy $I_F$ with a strictly concave $F$ over $\Om_+$.}
\end{theorem}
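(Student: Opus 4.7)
The plan is to follow the hint in the footnote: exhibit $\tilde\om_0$ as a maximizer on $\Om_+$ of a strictly concave Casimir, and then combine conservation of that Casimir along the Euler flow with a weak-to-strong convergence argument in the Hilbert space $L^2$.

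First I would record three structural properties of $\Om_+=\Om_+(\om_0)$. Since $\|\om(t)\|_{\linf}=\|\om_0\|_{\linf}=:R$ is preserved by the Euler flow, $\Om_+$ sits inside $(X_R,w^*)$. Inside this weak$^*$ compact metric space, $\Om_+$ is a decreasing intersection of nonempty weak$^*$-compact subsets, hence itself nonempty and weak$^*$-compact. Using continuity of the Euler semigroup on $(X_R,w^*)$ (the strengthened form of Yudovich's theorem recalled after~\rf{om0}), $\Om_+$ is flow-invariant: if $\om_*=\mathrm{w}^*\text{-}\lim_n\om(s_n;\om_0)$ for some $s_n\to\infty$, then $\om(t;\om_*)=\mathrm{w}^*\text{-}\lim_n\om(t+s_n;\om_0)\in\Om_+$ for every $t\in\RR$, so the entire Euler orbit of any point of $\Om_+$ remains inside $\Om_+$.

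Next, I would choose the strictly concave function $F(s)=-s^2$ and consider the Casimir
\[
I_F(\om) = \inta F(\om)\,dx = -\inta \om^2\,dx.
\]
Since $X_R$ is norm-bounded in $\linf$, weak$^*$ convergence in $X_R$ implies weak convergence in $L^2$; combined with weak lower semicontinuity of the $L^2$ norm, this makes $I_F$ weak$^*$-upper semicontinuous on $X_R$. Consequently $I_F$ attains its maximum on the weak$^*$-compact set $\Om_+$ at some $\tilde\om_0\in\Om_+$; equivalently, $\tilde\om_0$ \emph{minimizes} enstrophy over $\Om_+$. Now let $\tilde\om(t)$ denote the Euler solution issuing from $\tilde\om_0$. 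By invariance the whole orbit lies in $\Om_+$, and since every Casimir of the form~\rf{Fint} is conserved under the Euler flow, $\|\tilde\om(t)\|_{L^2}=\|\tilde\om_0\|_{L^2}$ for all $t\ge 0$. Given a sequence $t_n\to\infty$, pass to a subsequence with $\tilde\om(t_n)\to\tilde\om_*$ weak$^*$ in $\linf$; then $\tilde\om_*\in\Om_+$ and $\tilde\om(t_n)\rightharpoonup\tilde\om_*$ in $L^2$. By weak lower semicontinuity of $\|\cdot\|_{L^2}$ and the minimality of $\|\tilde\om_0\|_{L^2}$ on $\Om_+$,
\[
\|\tilde\om_0\|_{L^2}^2 \le \|\tilde\om_*\|_{L^2}^2 \le \liminf_{n\to\infty}\|\tilde\om(t_n)\|_{L^2}^2 = \|\tilde\om_0\|_{L^2}^2,
\]
which forces $\|\tilde\om(t_n)\|_{L^2}\to\|\tilde\om_*\|_{L^2}$. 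Weak convergence together with convergence of norms in a Hilbert space (the Radon--Riesz property of $L^2$) upgrades this to strong convergence $\tilde\om(t_n)\to\tilde\om_*$ in $L^2$. Hence every sequence along the orbit of $\tilde\om_0$ has an $L^2$-convergent subsequence, proving that the orbit is pre-compact in $L^2$.

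There is no real analytic obstacle in this argument; the only delicate point is the choice of Casimir. One needs strict concavity (to rule out cancellation of oscillations at the weak limit) paired with a norm for which weak convergence and convergence of norms imply strong convergence. The choice $F(s)=-s^2$ achieves both simultaneously in the cleanest form, by reducing the final step to the Hilbert space $L^2$; any other strictly concave $F$ bounded on $[-R,R]$ would also yield pre-compactness, but only in a suitable Orlicz norm, which one would then need to bootstrap to $L^2$ using the uniform $\linf$ bound on $X_R$.
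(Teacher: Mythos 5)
Your proof is correct and follows exactly the route sketched in the paper's footnote: maximize a strictly concave Casimir (here $F(s)=-s^2$, i.e.\ minimize enstrophy) over the weak$^*$-compact, flow-invariant set $\Om_+$, then upgrade weak$^*$ convergence to strong $L^2$ convergence via conservation of enstrophy and the Radon--Riesz property. The only cosmetic omission is the case of bounded time sequences, which is covered by the strong $L^2$ continuity in time of Yudovich solutions.
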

\subsection{Possible Consequences for Kuksin Measures}
In view of the above discussions concerning the long term dynamics of 2D Euler we now introduce two extreme scenarios:

 {\bf Scenario A: Euler solutions \weakly approach entropy maximizers}.
Let us assume that our torus is $\TT=\RR^2/a\ZZ\oplus b\ZZ$ with $0<a<b$. Let us further assume that
\begin{enumerate}
\item All entropy maximizers for Euler solutions (with given constraints) are shear flows independent of $x_1$. This is in fact not very far-fetched. It has been established rigorously for sufficiently small energies and local entropies $I_F$ with $F$ strictly concave.  See \cite{FoldesSverak12}.
\item All solutions \weakly approach these shear flows as $t\to\infty$. This would of course be a very strong statement which we do not really expect to be true. However, if (i) is correct, then the Statistical Mechanics predictions would suggest exactly this conclusion.
\end{enumerate}
If this scenario holds, then one can expect the Kuksin measures to be supported on the steady-state shear flows. Indeed, from the re-scaled equation~\rf{rescaled} we see that as $\nu\to 0$, the fast Euler dynamics will drive the solution towards the shear flows, whereas the term $\dd\om$ will be quickly damping the high frequency components of $\om$ generated by the Euler evolution. This scenario is genuinely infinite-dimensional: all the complexity of the Euler dynamics and the initial data will disappear into the high frequencies, and will never ``return''.  Such behavior does not have an analogy in finite-dimensional systems or in completely integrable systems.

 {\bf Scenario B: all solution orbits are pre-compact.}
Let us assume that the solution trajectories $\Trj$ in~\rf{o1} are pre-compact in $L^2$.  This may be unlikely, 
but it has not been ruled out. In this case the weak closures of these trajectories will be the same as the strong closures and all the functionals $I_F$ will be conserved on the ``end-states''. In particular, the mixing envisaged by the statistical mechanics approach will never take place. In this case the Kuksin measures will have much richer structure. Their ``irreducible components'',  similar to the measures on the tori~\eqref{tori} in the example leading to~\rf{ginv}, will be supported on the closures of the $L^2-$compact trajectories, which will play a role somewhat similar to the ergodic components in finite-dimensional Hamiltonian systems. In this scenario many features familiar from finite dimensions or completely integrable systems will still be present.

We conjecture that neither of these scenarios is quite true, but that the real behavior will be  intermediate between these two extremes: non-trivial $L^2$-precompact trajectories will exist, but initial data leading to them will not be ``generic''. The Kuksin measures will be supported on such trajectories. Depending on our degree of optimism, we can hope that  these solutions represent a type of a \weak attractor for all Euler solutions.

\section{Some results regarding invariant measures and inviscid limits}
\label{sec:properties} \setcounter{equation}{0}

In this section we first recall some elements of the mathematical analysis of the stochastic Navier-Stokes equations and its associated ergodic properties. 
This allows us then to summarize some of the analytical properties enjoyed by the Kuksin measures, established in previous works (cf.~\cite{KuksinShirikian12} and references therein).

\subsection{Mathematical setting: stochastic 2D NSE and its Markov semigroup}
\label{sec:setting}

We consider the Stochastic Navier-Stokes Equations on a periodic box $\TT$
\begin{align}
d \bfU + (\bfU \cdot \nabla \bfU + \nabla \Pres - \nu \Delta \bfU)dt = \sqrt{\nu} \rho dW &= \sqrt{\nu} \sum_k \rho_k dW^k, \quad \nabla \cdot \bfU = 0, \quad \bfU(0) = \bfU_0.
\label{eq:SNSEVel}
\end{align}
As discussed in the Introduction, in order to consider the inviscid limit $\nu\to 0$, we use the scaling $\sqrt{\nu}$ for the noise coefficient.
Typically we will use the
vorticity formulation of \eqref{eq:SNSEVel}.  Taking $\Vort = \nabla^\perp \cdot \bfU$ we obtain
\begin{align}
d \Vort + (\bfU \cdot \nabla \Vort - \nu \Delta \Vort)dt = \sqrt{\nu} \sigma dW &= \sqrt{\nu} \sum_k \sigma_k dW^k, \quad
	\Vort(0) = \Vort_0.
\label{eq:SNSEVort}
\end{align}
We will assume that $\int_{\TT} \Vort_0 dx = 0$ and $\int_{\TT}    \sigma dx =0$, which implies
that the solution is always mean zero.  Note that $\bfU$ can be recovered from $\Vort$ via the
Biot-Savart law.

Let us now set some notation used throughout the rest of the work. We denote
the Sobolev spaces
\begin{align*}
  H^{k} = \left\{ \omega \in H^{k}_{per} : \int_{\TT} \Vort_0 dx = 0 \right\},
\end{align*}
with $H^{0} = L^{2}_{per}$ and take the usual norms and inner products donated by $\|\cdot\|_{k}$,  $( \cdot , \cdot )_{k}$.
We will denote the $L^{p}$, $p \geq 1$ by $\| \cdot \|_{L^{p}}$.

To emphasize dependence on initial conditions we will write $\Vort^{\nu}(t, \Vort_{0}) = \Vort(t, \Vort_0)$ for the solution of \eqref{eq:SNSEVort}
with initial condition $\Vort_0$.  Assuming
\begin{align}
\sum_l \|\sigma_l \|_{L^{2}}^2 < \infty,
\label{eq:NoiseRegL2}
\end{align}
we have $\Vort(\cdot,\Vort_{0}) \in C([0, \infty); H^{0}) \cap L^2_{loc}([0,\infty); H^1)$ for any $\Vort \in H^{0}$. For $k \geq 1$,
assuming that
\begin{align}
\sum_l \|\sigma_l \|_{H^k}^2 < \infty,
\label{eq:NoiseRegHigher}
\end{align}
we also have the higher regularity properties for
\eqref{eq:SNSEVort}.  If $\Vort_{0} \in H^{0}$ then, for any $t_{0} > 0$,  $\Vort(\cdot, \Vort_{0}) \in C([t_{0}, \infty); H^{k}) \cap L^2_{loc}([t_{0},\infty); {H}^{k+1})$.
Similarly, if $\Vort_{0} \in {H}^{k}$, $\Vort(\cdot, \Vort_{0}) \in C([0, \infty); {H}^{k}) \cap L^2_{loc}([0,\infty); {H}^{k+1})$.
Note that the general well-posedness theory for the stochastic Navier-Stokes equations has been extensively developed.  See
e.g. \cite{BensoussanTemam1972, BensoussanTemam,Viot1,Cruzeiro1,
CapinskiGatarek,FlandoliGatarek1, MikuleviciusRozovskii4, Breckner,
BensoussanFrehse, BrzezniakPeszat,
MikuleviciusRozovskii2,
GlattHoltzZiane2, DebusscheGlattHoltzTemam1}.

{\bf Notational Conventions for the Stochastic Terms:} For brevity we will often write e.g.
\begin{align}
  \|\sigma \|_{L^{2}} :=  \Bigl( \sum_{l}  \|\sigma_{l} \|_{L^{2}}^{2}\Bigr)^{1/2}
\end{align}
when no confusion will arise from this abuse of notation.  Similarly, for $2 < p < \infty$ will also take
\begin{align}
  \| \sigma \|_{L^{p}} :=  \left(\int_{\TT} \Bigl( \sum_{l}  |\sigma_{l}(x) |^{2}\Bigr)^{p/2}dx\right)^{1/p}
\end{align}
and, for $p = \infty$,
\begin{align}
  \| \sigma \|_{L^{\infty}} :=  \sup_{x \in \TT}\Bigl( \sum_{l}  |\sigma_{l}(x) |^{2}\Bigr)^{1/2}.
\end{align}

We next recall some aspects of Markovian framework for \eqref{eq:SNSEVort}.
Take $\mathcal{B}({H}^{k})$ to be the Borealian subsets of  ${H}^k$.  We define the transition functions
\begin{align}
	P_{t}(\omega_0, \Gamma) = \Prb( \Vort(t, \Vort_0) \in \Gamma)
\end{align}
for any $t \geq 0$, $\omega_0 \in H$ and $\Gamma \in \mathcal{B}(H^k)$.
 Let $C_b({H}^{k})$ and $M_b({H}^{k})$ be the set of all real valued bounded
continuous respectively Borel measurable functions on ${H}^{k}$.  For $t \geq 0$, define
the \emph{Markov semigroup} according to
\begin{align}
	P_t \phi(\Vort_0) = \E \phi( \Vort(t, \Vort_0)) = \int_{H^k} \phi(\Vort) P_{t}(\Vort_{0}, d \Vort)
\end{align}
which maps $M_b({H}^{k})$ into itself.  Since $\Vort(t,\Vort_0)$ depends continuously on $\Vort_0 \in H^k$, it follows that $P_t$ is \emph{Feller} i.e. $P_t$ maps $C_b({H}^{k})$ into itself.
Let $Pr({H}^{k})$ be the set of Borealian probability measures on ${H}^{k}$.  Recall that
$\mu \in Pr(\dot{H}^{k})$ is an \emph{invariant measure} for \eqref{eq:SNSEVort} if
\begin{align}
  \int_{H^k} \phi(\Vort_0) d \mu(\Vort_0) = \int_{H^k} P_t  \phi(\Vort_0) d \mu(\Vort_0), \quad \textrm{ for every } t \geq 0.
\end{align}
For further generalities of the ergodic theory of stochastic partial differential equations see
e.g. \cite{ZabczykDaPrato1996,KuksinShirikian12}.

\subsection{Existence and Uniqueness of invariant measures for SNSE}

For each $\nu > 0$, there exists an invariant measure $\mu_{\nu}$ in $H^0$ for
\eqref{eq:SNSEVort}.   This can be established using the classical Kryloff-Bogoliouboff procedure, \cite{KryloffBogoliouboff1937}
by proving the tightness in $H^0$ of a sequence of time average measures starting from any convenient initial condition.
Note that if \eqref{eq:NoiseRegHigher} holds then it is not hard to show that $\mu_{\nu}$ is supported on $H^k$, \cite{KuksinShirikian12}.
We will denote by $\Vort_{S}^{\nu}(\cdot)$
a statistically stationary solution of \eqref{eq:SNSEVort} associated to $\mu_{\nu}$.  In other words
$\mu_{\nu}(\cdot) = \Prb(\Vort_{S}^{\nu}(t) \in \cdot)$.

The invariant measures $\mu_{\nu}$ along with the associated stationary solutions $\Vort_{S}^{\nu}$
 satisfy the balance relation:
 \begin{align}
   \E \| \Vort_{S}^{\nu}\|_{L^{2}}^{2} = \int \| \Vort_{0} \|_{L^{2}}^{2} d \mu_{\nu}(\Vort_{0}) 
   = \frac{1}{2}  \sum_{k} \| \rho_{k} \|_{L^{2}}^{2}
     \label{eq:BalanceL2}
 \end{align}
 and
 \begin{align}
     \E \| \Vort_{S}^{\nu}  \|_{H^{1}}^{2} = \int \| \Vort_{0} \|_{H^{1}}^{2} d \mu_{\nu}(\Vort_{0}) 
     = \frac{1}{2}  \sum_{k} \| \sigma_{k} \|_{L^{2}}^{2}.
     \label{eq:BalanceH1}
 \end{align}
 We derive \eqref{eq:BalanceL2}, \eqref{eq:BalanceH1} by applying the It\={o}
 formula to, respectively to \eqref{eq:SNSEVel}, \eqref{eq:SNSEVort} for $\Vort_{\nu}^{S}$.     For example,
 \begin{align}
  \E \|\Vort(t, \Vort_0)\|_{L^2}^2 + \nu \E \int_0^T \| \Vort(s, \Vort_0) \|_{H^1}^2 ds  = \E \|\Vort_0\|_{L^2}^2 + \nu T \|\sigma\|^2_{L^2}
 \end{align}
One can also use the It\={o} formula to prove that
 \begin{align}
 \E \exp( \delta \| \Vort_{S}^{\nu}\|^{2}_{L^2} ) \leq C < \infty
 \label{eq:ExpMoment}
 \end{align}
 for some $\delta >0$ and a constant $C$ that is independent of $\nu$.   See e.g. \cite{KuksinShirikian12} and containing references.
 Note that
 \begin{align}
   \E \| \Vort_{S}^{\nu}\|_{H^{k+1}}^{2} = \int \| \Vort_{0} \|_{H^{k+1}}^{2} d \mu_{\nu}(\Vort_{0}) \leq C(\nu)
\end{align}
where $C(\nu)$ is finite. However, it is doubtful that we can bound this quantity $C(\nu)$ independently of $\nu$ for $k \geq 1$.

\begin{remark}[{\bf Uniqueness of $\mu_\nu$ for $\nu > 0$}]
For each $\nu >0$ the uniqueness of $\mu_\nu$ is a much deeper question and
requires the imposition of much specific conditions on $\sigma$.  One needs to establish smoothing properties of the Markov semigroup $P_t$ 
(ellipticity or hypoellipticity of the Kolmogorov equation) and that a common state can be reach by the dynamics regardless of initial conditions (irreducibility). 
See, e.g. \cite{FlandoliMaslowski1, ZabczykDaPrato1996,
Mattingly1,Mattingly2, BricmontKupiainenLefevere2001, KuksinShirikyan1,KuksinShirikyan2, Mattingly03,MattinglyPardoux1,
HairerMattingly06, Kupiainen10, HairerMattingly2011, Debussche2011a, KuksinShirikian12}.  Since the
results we develop here related to inviscid limits do not require $\mu_\nu$
to be unique, we do not impose such additional conditions on $\sigma$.
\end{remark}

\begin{remark}[\bf Some explicit stationary solutions] \label{rem:stationary}
We can identify some very special choices for $\sigma$ which allow us to
obtain explicit statistically stationary solutions of \eqref{eq:SNSEVort}.
Suppose we have found any $\Vort_E:\TT \rightarrow \RR$ satisfying
\begin{align}
\bfU_E  \cdot \nabla \Vort_E  = 0, \quad
-\Delta \Vort_E  = \lambda \Vort_E .
\label{eq:LamStates}
\end{align}
Here $\lambda > 0$ and $\bfU_E$ is obtained from $\Vort_E$ via the Biot-Savart law.  
For example  ``laminar states'' 
satisfy \eqref{eq:LamStates}. Consider the process
\begin{align}
  \Vort_S^\nu(t,x) = \Vort_E(x) \sqrt{\nu} \int_{-\infty}^t \exp(-\nu \lambda (t -s)) dW_s^1. 
  \label{eq:statSolExplicit}
\end{align}
Let us note that $\Vort_S^\nu := \Vort_E(x) z_S(t)$, where $z_S$ is the unique stationary solution of the 1d Ornstein-Uhlenbeck (Langevin) process
$ d z + \nu \lambda z dt = \sqrt{\nu} dW^1$.  Here, $z_S$ is normally distributed with mean zero and variance $(2\lambda)^{-1}$, for each $\nu>0$. Then $\Vort_S^\nu$ is a stationary solution of  
\begin{align}
d \Vort + (\bfU \cdot \nabla \Vort - \nu \Delta \Vort)dt = \sqrt{\nu}  \Vort_E dW^1.
\label{eq:SNSEVortSpecificForce}
\end{align}
This may be checked, for example by using the mild formulation of \eqref{eq:SNSEVortSpecificForce}. Hence, in this setting the invariant measure obtained as $\nu\to0$ is also normally distributed around $\Vort_E$.
\end{remark}

\subsection{Previously established properties of Kuksin measures}

The balance relation \eqref{eq:BalanceH1} implies that any collection
of invariant measures $\mathcal{I} = \{\mu_{\nu}\}_{\nu > 0}$ is tight and therefore weakly compact.
We denote by $\mu_{0}$ a limiting point of $\mathcal{I}$, and refer to these measures as {\em Kuksin measures}. Let us now recall some known properties of the measures $\mu_0$. We refer to \cite{KuksinShirikian12} for the proofs of all the facts described in this subsection.

Define $\mathcal{K} =  W^{1,1}(\RR; H^0) \cap L^{2}_{loc}(\RR, {H}^{1}) $
and let $\mathcal{K}_{E}$ be the set of $\Vort \in \mathcal{K}$ that satisfy the Euler equation
\begin{align}
    \pd{t} \Vort + \bfU \cdot \nabla \Vort = 0
    \label{eq:FreeEuler}
\end{align}
in its vorticity form weakly for all  $t \in \RR$.  Moreover if $\Vort_{1}, \Vort_{2} \in \mathcal{K}_{E}$
and $\Vort_{1}(t) = \Vort_{2}(t)$ for some $t \in \RR$ then $\Vort_{1} = \Vort_{2}$.  This follows
from the methods of Yudovich (see \cite{MajdaBertozzi2002}).  Define $\pi: \mathcal{K} \rightarrow H$
by $\pi(\Vort) = \Vort(0)$, the fiber at $t = 0$.  Note that, from uniqueness, it follows that $\pi$ is injective on $\mathcal{K}$
and let
\[ \XXX = \pi(\mathcal{K}_{E}).\]  It  then holds that $\mu_0(\XXX) = 1$.

The Euler equation is well defined as a dynamical system on $\XXX$.  Indeed, for $\Vort_{0} \in \XXX$, there exists a unique solution $\Vort(\cdot, \Vort_{0}) \in \mathcal{K} \subset C( \RR, {H}^{0}) \cap L^{2}_{loc}( \RR, {H}^{1})$.
Indeed for $t \in \RR$ define $S_{t} : \XXX \rightarrow \XXX$,  via $S_{t} \Vort_{0} = \Vort(t, \Vort_{0})$. We endow $\XXX$ with the topology inherited from $\mathcal{K}$,
i.e. we take
\begin{align}
  d_{\mathcal{K}}(\Vort^{1}_{0}, \Vort^{2}_{0}) = \sum_{N \geq 1} 2^{-N} \frac{\mathcal{E}_{N}(\Vort^{1}_{0},\Vort^{2}_{0})}{1+ \mathcal{E}_{N}(\Vort^{1}_{0}, \Vort^{2}_{0})}
\end{align}
where
\begin{align}
  \mathcal{E}_{N}(\Vort_{0}^{1},\Vort^{2}_{0} ) = \sup_{t \in [-N, N]} \|\Vort(\cdot, \Vort_{0}^{1}) -\Vort(\cdot, \Vort^{2}_{0})  \|_{0}^{2} + \int_{-N}^{N} \| \Vort(\cdot, \Vort_{0}^{1}) -\Vort(\cdot, \Vort^{2}_{0})\|_{1}^{2} dt.
\end{align}
We then have that $\{S_{t}\}_{t \in \RR}$ is a group of homeomorphisms on $\XXX$. Moreover
$\mu_{0}$ is invariant for $\{S_{t}\}_{t \geq 0}$, i.e. $\mu_{0}(E) = \mu_{0}(S^{-1}(t) E) = \mu_{0}(S(-t) E)$, for all
$t \in \RR$ and any $E \in \mathcal{B}(\XXX)$. 

Using local time techniques it may be shown that $\mu_0$ is ``non-trivial'' in the sense that it contains no atoms. In other words, for any $\Vort \in \XXX$, we have that $\mu_0(\{ \Vort \}) = 0$. Further properties such as spacial homogeneity, and higher moment bounds, and pointwise in space moment bounds for the measures $\mu_0$ are discussed in~\cite{KuksinShirikian12}.

\begin{remark}[{\bf Vortex patches and $\mu_0$}]
Let us observe that there are no vortex patch solutions in the support of $\mu_0$.  Indeed, for any open set $O \subset \RR^{2}$ let $\chi_{O}$ be the indictor function on $O$. Define
\begin{align*}
  {\mathcal P} = \left\{ \chi_{O} : O \subset \subset \TT, \textrm{ bounded, simply connected with smooth boundary } \partial O \right\}.
\end{align*}
For any $\Vort_{0} \in {\mathcal P} $, according to e.g. \cite{MajdaBertozzi2002} there exists $t_{0} > 0$ such that $\Vort(t, \Vort_{0}) \in{\mathcal P} $, for all $t \in [-t_{0},t_{0}]$.  As such, since ${H}^{1} \cap {\mathcal P} = \emptyset$,
for any $\Vort_{0} \in {\mathcal P} $,
$\Vort^{0}(\cdot, \Vort_{0}) \not \in L^{2}_{loc}(\RR, \dot{H}^{1})$ and hence $\Vort^{0}(\cdot, \Vort_{0}) \not \in \mathcal{K}_{E}$.
Thus $\mu_{0}({\mathcal P} ) = 0$ since
\begin{align}
{\mathcal P}  \cap \XXX = \emptyset.
\label{eq:NoVortexPatches}
\end{align}
\end{remark}

\section{Invariant Measure Supported on  \texorpdfstring{$L^\infty$}{L infinity} and Related Estimates} \label{sec:Moser}

In this section we establish uniform in $\nu$ bounds on $\E \|\Vort_\nu^S\|_{L^\infty}$, where $\Vort_\nu^S$ are stationary solutions of \eqref{eq:SNSEVort}.
Our approach makes use of the Moser iteration technique and draws on earlier works in this direction in \cite{DenisMatoussiStoica2005,DenisMatoussiStoica2009}.
However we obtain parabolic regularization and time decay for the initial data component, which was not addressed in the above works.
Let us note that in the deterministic case $L^\infty$ bounds may be obtained without appealing to the  Moser iteration: one carries out $L^p$ estimates,
which take advantage of cancellation in the nonlinearity, and then sends $p\to \infty$. In the stochastic case, the It\=o correction terms arise and cause the bounds on the $L^p$ norm to grow unboundedly as $p\to \infty$.

\begin{theorem}[{\bf Stochastic Moser}]\label{thm:Moser}
For $\nu>0$ consider an invariant measure $\mu_\nu$ and an associated stationary solution $\Vort_\nu^S$ of  \eqref{eq:SNSEVort}, where the stochastic forcing is assumed to be sufficiently smooth, e.g. \eqref{eq:NoiseRegHigher} holds. Then the following bound holds
\begin{align}
\E \|\Vort_\nu^S\|_{L^\infty} \leq C  < \infty \label{eq:thm:Moser}
\end{align}
where $C = C(\sigma)$ is independent on $\nu$.
\end{theorem}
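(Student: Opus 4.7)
The plan is to reduce Theorem~\ref{thm:Moser} to a parabolic regularization bound of Moser type for the drift-diffusion SPDE \eqref{eq:genddeqn} whose constants are independent of the (large) divergence-free drift, and then to close the loop using stationarity and the balance relation \eqref{eq:BalanceL2}. The change of variables $\tilde\Vort(t,x)=\Vort_\nu^S(t/\nu,x)$, $\tilde W^k(t)=\sqrt\nu\,W^k(t/\nu)$ converts \eqref{eq:SNSEVort} into a process of the form \eqref{rescaled},
\begin{align*}
d\tilde\Vort+\bigl(\boldsymbol{a}\cdot\nabla\tilde\Vort-\Delta\tilde\Vort\bigr)dt=\sum_k\sigma_k\,d\tilde W^k,\qquad\boldsymbol{a}=\nu^{-1}\tilde\bfU,\quad\nabla\cdot\boldsymbol{a}=0,
\end{align*}
in which the small parameter $\nu$ has been absorbed into an unboundedly large divergence-free drift. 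The rescaled process $\tilde\Vort$ remains statistically stationary with law $\mu_\nu$, so $\E\|\tilde\Vort(t)\|_{L^\infty}=\E\|\Vort_\nu^S\|_{L^\infty}$ for every $t>0$; it therefore suffices to bound the left-hand side uniformly in $\nu$ and in the size of $\boldsymbol{a}$.

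For this one carries out a parabolic Moser iteration on nested cylinders $Q_n=[\tau_n,2T]$ with $T=1/16$, $\tau_n=T(1-2^{-n})$, and doubling exponents $p_n=2^n$. Applying It\^o's formula to $\|\tilde\Vort(t)\|_{L^{p_n}}^{p_n}$ eliminates the transport contribution via $\nabla\cdot\boldsymbol{a}=0$, leaving
\begin{align*}
d\|\tilde\Vort\|_{L^{p_n}}^{p_n}+\tfrac{4(p_n-1)}{p_n}\bigl\|\nabla|\tilde\Vort|^{p_n/2}\bigr\|_{L^2}^2dt=\tfrac{p_n(p_n-1)}{2}\int_{\TT}\Bigl(\sum_k\sigma_k^2\Bigr)|\tilde\Vort|^{p_n-2}dx\,dt+dM_n.
\end{align*}
Integrating on $[\tau_{n-1},s]$, taking $\sup_{s\in Q_n}$, absorbing the It\^o correction into the parabolic dissipation through a Young splitting that uses only $\|\sigma\|_{L^\infty}$, and bounding the martingale by Burkholder-Davis-Gundy (whose quadratic variation is controlled by $\|\sigma\|_{L^\infty}^2\int\|\tilde\Vort\|_{L^{p_n}}^{2p_n-2}dt$) yields a recursion
\begin{align*}
A_n^{1/p_n}\le\bigl(C2^{Cn}\bigr)^{1/p_n}\bigl(A_{n-1}^{1/p_{n-1}}\vee\|\sigma\|_{L^\infty}\bigr),\;\; A_n:=\E\sup_{Q_n}\|\tilde\Vort\|_{L^{p_n}}^{p_n}+\E\int_{Q_n}\bigl\|\nabla|\tilde\Vort|^{p_n/2}\bigr\|_{L^2}^2dt.
\end{align*}
The bridge between levels $p_{n-1}$ and $p_n$ is the two-dimensional Gagliardo-Nirenberg inequality $\|f\|_{L^4}^4\lesssim\|f\|_{L^2}^2\|\nabla f\|_{L^2}^2$ applied to $f=|\tilde\Vort|^{p_{n-1}/2}$. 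Since $\sum_n p_n^{-1}\log(C2^{Cn})<\infty$, iterating from $n=0$ and sending $n\to\infty$ produces the parabolic regularization bound \eqref{eq:Moser:intro}, with a constant independent of $\boldsymbol{a}$.

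By stationarity and \eqref{eq:BalanceL2} the quantity $\E\|\tilde\Vort(t)\|_{L^2}^2$ is independent of $t$ and bounded uniformly in $\nu$; the exponential moment \eqref{eq:ExpMoment} further gives $\sup_\nu\E\|\tilde\Vort(0)\|_{L^2}^4<\infty$. Hence $\E\|\tilde\Vort\|_{L^4([0,2T];L^2)}$ is bounded uniformly in $\nu$, and substituting into \eqref{eq:Moser:intro} and invoking stationarity once more yields \eqref{eq:thm:Moser}. The main obstacle is the Moser recursion itself: unlike the deterministic case the It\^o correction at level $p$ is of order $p^2$, and the BDG estimate further degrades temporal into spatial integrability by a factor of two, so the iteration must live on asymmetric mixed-norm quantities rather than on the diagonal $L^{p_n}_{t,x}$ norms of the classical scheme. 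Absorbing the correction through a Young split that uses only $\|\sigma\|_{L^\infty}$, without appealing to any Sobolev control depending on $\boldsymbol{a}$, is what simultaneously keeps the $p$-dependent constants summable after extracting $p_n$-th roots and independent of the drift.
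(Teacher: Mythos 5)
Your overall architecture matches the paper's: rescale time so that the viscosity becomes an arbitrarily large divergence-free drift, kill the transport term in the $L^p$ It\^o formula, run a Moser iteration on nested time intervals, and close using stationarity together with the exponential moment \eqref{eq:ExpMoment}. The gap is in the recursion itself. You iterate the quantities $A_n=\E\sup_{Q_n}\|\tilde\Vort\|_{L^{p_n}}^{p_n}+\E\int_{Q_n}\|\nabla|\tilde\Vort|^{p_n/2}\|_{L^2}^2\,dt$ and assert $A_n^{1/p_n}\le(C2^{Cn})^{1/p_n}\bigl(A_{n-1}^{1/p_{n-1}}\vee\|\sigma\|_{L^\infty}\bigr)$, i.e.\ $A_n\le C2^{Cn}\bigl(A_{n-1}^{2}\vee\dots\bigr)$. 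But the Gagliardo--Nirenberg bridge is a \emph{pathwise} inequality: with $f=|\tilde\Vort|^{p_{n-1}/2}$ it gives $\int\|f\|_{L^4}^4\lesssim\bigl(\sup_t\|f\|_{L^2}^2+\int\|\nabla f\|_{L^2}^2\bigr)^2$ realization by realization, so after taking expectations you control $A_n$ only by the \emph{second} moment of the level-$(n-1)$ energy, not by the square $A_{n-1}^2$ of its first moment; Jensen goes the wrong way. Iterating, step $n$ demands the $2^n$-th moment of the base quantity, and the scheme does not close. A second symptom that something must be wrong: if your recursion did hold with summable constants, $\lim_n A_n^{1/p_n}$ would bound the essential supremum of $\|\tilde\Vort\|_{L^\infty}$ over the probability space, which is infinite for additive Gaussian forcing (the Gaussian-type tail in \eqref{eq:ExpMoment} is sharp); only the first moment $\E\|\Vort_\nu^S\|_{L^\infty}$ can be finite, and that is all the theorem claims.

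The paper's proof is built around exactly this point, and it is the ingredient you are missing: the iteration is run on the \emph{first} moments $A_k=\E\bigl(\|\Vort\|_{L^{2p_k}(I_k;L^{p_k})}\vee\|\sigma\|_{L^\infty}\bigr)$ (note the asymmetric $L^{2p}_tL^p_x$ norms forced by the quadratic variation bound \eqref{eq:Stochastic:Term}, whence the slower gain $\lambda=5/4$ per step instead of doubling). To obtain a recursion $A_{k+1}\le a_kA_k$ directly on first moments one must take $p$-th roots \emph{inside} the expectation; this is accomplished with the Burkholder--Davis--Gundy variant \eqref{eq:SuperBDG} applied with exponent $\delta=1/p<1$, carrying the maximum with $\|\sigma\|_{L^\infty}$ through every step, together with the constant bound $C_{BDG}(\delta)\le 2^{\sqrt{\delta}}$ of \eqref{eq:BDGCons}, which is what makes $\prod_k C_{BDG}(p_k^{-1})$ converge. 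Without this device, or an equivalent way to commute the $p$-th root past $\E$, the Young splitting and the summable constants $(C2^{Cn})^{1/p_n}$ you invoke cannot be realized.
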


An immediate consequence of estimate \eqref{eq:thm:Moser} is that any limit point $\mu_0$ of any sequence of invariant measures $\{ \mu_\nu\}_{\nu >0}$
is concentrated on $L^\infty(\TT)$.

\begin{theorem}[{\bf Invariant measure supported on $L^\infty$}]
\label{thm:support}
Under the assumptions of Theorem~\ref{thm:Moser}, consider any collection of invariant measures $\{\mu_{\nu}\}_{\nu > 0}$ of \eqref{eq:SNSEVort}.  Then there exists a subsequence
and a measure $\mu_0$ such that $\mu_{\nu_j} \rightharpoonup \mu_0$ (weakly) in $\mbox{Pr}(H^0)$ as $j \to \infty$ and $\mu_0(L^\infty) = 1$.
\end{theorem}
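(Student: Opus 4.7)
The plan splits naturally into two parts: first extract a weak limit $\mu_0$ of a subsequence of the $\mu_\nu$, and second upgrade the support property using the $L^\infty$ estimate from Theorem~\ref{thm:Moser}.

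For the first part, I would use the enstrophy balance~\eqref{eq:BalanceH1} to deduce tightness of the family $\{\mu_\nu\}_{\nu>0}$ in $\mathrm{Pr}(H^0)$. Concretely, the sets
\[
  K_R = \{\omega \in H^0 : \|\omega\|_{H^1} \leq R\}
\]
are compact in $H^0$, since they are closed bounded subsets of $H^1$ and the embedding $H^1 \hookrightarrow H^0$ is compact by Rellich--Kondrachov. By Chebyshev together with~\eqref{eq:BalanceH1},
\[
  \mu_\nu(K_R^c) \leq \frac{1}{R^2}\int \|\omega\|_{H^1}^2\, d\mu_\nu(\omega) = \frac{1}{R^2}\E \|\Vort_\nu^S\|_{H^1}^2 \leq \frac{1}{2 R^2} \sum_k \|\sigma_k\|_{L^2}^2,
\]
which is uniform in $\nu$. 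Hence $\{\mu_\nu\}$ is tight on $H^0$, and Prokhorov's theorem yields a subsequence $\nu_j \to 0$ and a probability measure $\mu_0 \in \mathrm{Pr}(H^0)$ with $\mu_{\nu_j} \rightharpoonup \mu_0$ weakly.

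For the second part, I would show that $\mu_0(L^\infty)=1$ by applying the portmanteau theorem to the open sublevel complements of the $L^\infty$ norm. The key observation is that the map $\omega \mapsto \|\omega\|_{L^\infty}$ is lower semicontinuous on $H^0$: if $\omega_n \to \omega$ in $L^2$, then along a subsequence $\omega_{n_k} \to \omega$ pointwise a.e., so $|\omega(x)| \leq \liminf_k \|\omega_{n_k}\|_{L^\infty}$ for a.e.\ $x$, from which $\|\omega\|_{L^\infty} \leq \liminf_n \|\omega_n\|_{L^\infty}$. Consequently, for every $M>0$ the set
\[
  A_M = \{\omega \in H^0 : \|\omega\|_{L^\infty} > M\}
\]
is open in $H^0$. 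Using Chebyshev together with Theorem~\ref{thm:Moser}, one obtains
\[
  \mu_{\nu_j}(A_M) = \Prb\bigl(\|\Vort_{\nu_j}^S\|_{L^\infty} > M\bigr) \leq \frac{1}{M} \E\|\Vort_{\nu_j}^S\|_{L^\infty} \leq \frac{C}{M},
\]
with $C$ independent of $j$. The portmanteau theorem for open sets then gives
\[
  \mu_0(A_M) \leq \liminf_{j\to \infty} \mu_{\nu_j}(A_M) \leq \frac{C}{M},
\]
so $\mu_0(\{\omega : \|\omega\|_{L^\infty} \leq M\}) \geq 1 - C/M$. Letting $M \to \infty$ and using monotone convergence of the nested family yields $\mu_0(L^\infty(\TT))=1$, as desired.

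The only place where genuine work is required is the uniform $L^\infty$ moment bound feeding into Chebyshev, which is precisely the content of Theorem~\ref{thm:Moser}; everything else is soft (tightness from a second-moment bound, Prokhorov, and the portmanteau inequality applied to an open set defined by a lower semicontinuous functional). I expect no additional obstacle here, since once the uniform bound $\E\|\Vort_\nu^S\|_{L^\infty} \leq C$ is available, the passage to the limit measure is essentially automatic.
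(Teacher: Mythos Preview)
Your proof is correct. The tightness argument is identical to the paper's. For the support statement you take a genuinely different but equally valid route: you observe that $\omega \mapsto \|\omega\|_{L^\infty}$ is lower semicontinuous on $H^0$, so the superlevel sets $A_M$ are open, and then apply portmanteau together with Chebyshev and the uniform bound from Theorem~\ref{thm:Moser}. The paper instead constructs continuous bounded approximants $\phi_{R,\varepsilon}(\omega) = \|\rho_\varepsilon * \omega\|_{L^\infty} \wedge R$ via mollification, passes to the limit in $\int \phi_{R,\varepsilon}\,d\mu_{\nu_j}$ using weak convergence, and then sends $\varepsilon \to 0$ with Fatou. Both arguments rest on the same underlying fact (lower semicontinuity of the $L^\infty$ norm under $L^2$ convergence); the paper's route has the minor advantage that it directly yields the stronger conclusion $\int \|\omega\|_{L^\infty}\,d\mu_0(\omega) \leq C$, which is the form stated in Theorem~\ref{thm:main:intro}. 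Your argument gives only $\mu_0(L^\infty)=1$ as written, though the moment bound would follow immediately from your lower semicontinuity observation together with the layer-cake formula or the general fact that $\int f\,d\mu_0 \leq \liminf_j \int f\,d\mu_{\nu_j}$ for nonnegative lower semicontinuous $f$.
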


\begin{remark}
Let us note that $\mu_0$ is an invariant measure for the Euler equation over $L^\infty \cap X$, where $X$ is the fiber at $t=0$ of ${\mathcal K}_E$.
See Section~\ref{sec:properties} for details.
\end{remark}

We shall first give the proof of Theorem~\ref{thm:support}, assuming Theorem~\ref{thm:Moser} holds, and then return and prove Theorem~\ref{thm:Moser}.

\begin{proof}[Proof of Theorem~\ref{thm:support}]
\label{lemma:SuffCond1}
As in~\cite{Kuksin2004} by using \eqref{eq:BalanceH1} we have that $\{\mu_{\nu}\}$ is tight, and hence
weakly compact on $\mbox{Pr}(H^0)$.  Taking $\mu_{0}$ to be a limit point of $\{\mu_{\nu}\}_{\nu>0}$ in the weak topology of ${H}^{0}$,
there exists a sequence $\nu_{j} \to 0$ such that
\begin{align*}
 \lim_{j \rightarrow \infty} \int \phi(\Vort_{0}) d\mu_{\nu_{j}}(\Vort_{0})   = \int \phi(\Vort_{0}) d \mu_{0}(\Vort_{0})
\end{align*}
for each $\phi \in C_{b}({H}^{0})$.

According to Theorem~\ref{thm:Moser} we have
\begin{align}
 \sup_{\nu > 0} \E \| \Vort^{S}_{\nu}\|_{L^{\infty}} = \sup_{\nu > 0} \int \| \Vort_{0} \|_{L^{\infty}} d\mu_{\nu}(\Vort_{0}) \leq C < \infty.
 \label{eq:UniformBnd}
\end{align}
We claim that this implies
\begin{align}
\int \| \Vort_{0} \|_{L^{\infty}} d\mu_{0}(\Vort_{0}) \leq C.
\label{eq:GoodSupportConclusion}
\end{align}
Indeed, take $\rho_{\varepsilon}$ to be a standard family of smooth mollifiers on $\RR^{2}$.
For $R > 0$, $\varepsilon >0$ define
$	
\phi_{R, \varepsilon}(\Vort) = \| \rho_{\varepsilon} * \Vort \|_{L^{\infty}} \wedge R.
$
Young's inequality implies the $\phi_{R, \varepsilon} \in C_{b}({H}^{0})$ so that
\begin{align*}
\int \phi_{R, \varepsilon}(\Vort_{0}) d \mu_{0}(\Vort_{0}) \leq C.
\end{align*}
Now, by Fatou's Lemma, we have
\begin{align*}
\int (\liminf_{\varepsilon > 0} \|\rho_{\varepsilon} * \Vort_{0}\|_{L^{\infty}}\wedge R) d \mu_{0}(\Vort_{0})  \leq C.
\end{align*}
Since
$\|\Vort_{0}\|_{L^{\infty}} \leq \liminf_{\varepsilon > 0} \|\rho_{\varepsilon} * \Vort_{0}\|_{L^{\infty}}$ for each $\Vort_{0} \in L^{\infty}$
then \eqref{eq:GoodSupportConclusion} follows, completing the proof.
\end{proof}

\begin{proof}[Proof of Theorem~\ref{thm:Moser}]

As a first step we rescale the time in \eqref{eq:SNSEVort}.  Taking $\tilde t = t/\nu$ we obtain
\begin{align}
d \tilde \Vort + \left( \frac{1}{\nu} \tilde \bfU \cdot \nabla \tilde \Vort - \Delta \tilde \Vort \right)dt =  \sigma d\tilde W, \qquad
\tilde \Vort(0) = \Vort_S(0),
\label{eq:SNSEVortRescale}
\end{align}
where we have denoted $\tilde \omega(t,x) = \omega(\tilde t,x)$, $\tilde \bfU(t,x) = \bfU(\tilde t,x)$, and  $\tilde{W}(t) = \sqrt{\nu} W(\tilde t)$. Note that $\tilde W(t)$ has the same statistical properties as $W(t)$. For ease of notation for we drop the tildes until \eqref{eq:FinalGeneralDataRescale} below.

Fix $T>0$, $\rho>1$, and define $T_{k}$ to be an increasing sequence of times with $T_0 = 0$ and $T_k $ as $k \to \infty$.
Let $I_k = [T_k , 2T]$ be a sequence of time intervals approaching $[T,2T]$.

To analyze  \eqref{eq:SNSEVortRescale} we apply the $L^p$ It\=o Lemma.  This yields
  \begin{align}
  d \| \Vort\|_{L^p}^p = \left(\frac{p}{\nu} T_{1,p} +  p T_{2,p} +  \frac{p(p-1)}{2} T_{3,p}\right) dt + p \sum_m S_{m,p} dW^m \label{eq:Ito:Lp}
  \end{align}
  where we have denoted
  \begin{align}
T_{1,p}(t) &= - \int_{\TT} \bfU(t,x) \cdot\nabla \Vort(t,x) \Vort(t,x) |\Vort(t,x)|^{p-2} dx = 0 \notag\\
T_{2,p}(t) &=   \int_{\TT}  \Delta \Vort (t,x) \Vort(t,x) |\Vort(t,x)|^{p-2} dx \label{eq:LpEst:T2p}\\
T_{3,p}(t) &=  \sum_{m} \int_{\TT} |\Vort(t,x)|^{p-2} \sigma_{m}(x)^{2}  dx \label{eq:LpEst:T3p}\\
S_{m,p}(t) &=  \int_{\TT}  \sigma_{m}(x) \Vort(t,x) |\Vort(t,x)|^{p-2} dx \label{eq:LpEst:Skp}
\end{align}
for all $p \geq 2$. In the identity for $T_{1,p}$ we have integrated by parts in $x$ and used that $\nabla \cdot \bfU = 0$.
Since $\nu>0$, we are dealing with spatially smooth solutions of \eqref{eq:SNSEVortRescale}, and hence the identity \eqref{eq:Ito:Lp} may be justified by applying the It\=o lemma pointwise in $x$, integrating over the torus, and using the stochastic Fubini theorem (see, e.g.~\cite{ZabczykDaPrato1992}). Note however that \eqref{eq:Ito:Lp} may  be justified for much less spatially regular stochastic evolution equations, as recently established in~\cite{Krylov2010}.

Let $s \in [T_{k},T_{k+1}]$ and $t > s$. We start with \eqref{eq:Ito:Lp} for $p\geq2$, integrated from $s$ to $t$:
\begin{align}
\| \Vort(t)\|_{L^p}^p - p  \int_s^t T_{2,p}(\tau) d\tau
&= \|\Vort(s)\|_{L^p}^p +  \frac{p(p-1)}{2} \int_s^t T_{3,p}(\tau) d\tau
	+ p  \sum_m \int_s^t S_{m,p}(\tau) dW_{\tau}^m \notag\\
&= \|\Vort(s)\|_{L^p}^p +  \frac{p(p-1)}{2} \int_s^t T_{3,p}(\tau) d\tau
	+ p  \sum_m \int_{T_k}^t S_{m,p}(\tau) dW_{\tau}^m \notag\\
& \qquad \qquad- p \sum_m \int_{T_k}^s S_{m,p}(\tau) dW_{\tau}^m
\label{eq:Ito:p:s:t}
\end{align}
where $T_{2,p},T_{3,p},S_{m,p}$ are as defined in \eqref{eq:LpEst:T2p}--\eqref{eq:LpEst:Skp}. We take the supremum of
\eqref{eq:Ito:p:s:t} over every $t \in I_{k+1}$, and obtain
\begin{align}
\| \Vort\|_{L^\infty(I_{k+1};L^p)}^p  &- p  \int_{I_{k+1}} T_{2,p}(\tau) d\tau
	\notag\\
	\leq&  \|\Vort(s)\|_{L^p}^p +  \frac{p(p-1)}{2} \int_{I_{k}} |T_{3,p}(\tau)| d\tau
	+ p \sup_{t \in I_{k+1}} \left| \sum_m \int_{T_k}^t S_{m,p}(\tau) dW_{\tau}^m\right|  \notag\\
	& \qquad \qquad +  p   \left| \sum_m \int_{T_k}^s S_{m,p}(\tau) dW_{\tau}^m\right|
	\notag\\
	\leq&  \|\Vort(s)\|_{L^p}^p +  \frac{p(p-1)}{2} \int_{I_{k}} |T_{3,p}(\tau)| d\tau
	+ 2p \sup_{t \in I_{k}} \left| \sum_m \int_{T_k}^t S_{m,p}(\tau) dW_{\tau}^m\right|
	\label{eq:Ito:p:s}
\end{align}
where we have used that $s \in [T_{k},T_{k+1})$, and that $T_{2,p}\leq 0$ (cf.~\eqref{eq:MEst:1} below). This allowed us to  bound from below
the time integration on the left side from $[s,2T]$ with the smaller one on $[T_{k+1},2 T]$.

For the forthcoming computations it will be convenient to introduce the following standard notations
for the stochastic (martingale) terms.  For any $0 \leq r \leq t$, let
\begin{align}
   M_{[r,t],p} = \sum_m \int_r^t S_{m,p}(\tau) dW_{\tau}^m
   \label{eq:M}
\end{align}
and denote the running absolute maximum by
\begin{align}
M_{[r,t],p}^* := \sup_{s \in [r,t]} \left| \sum_m \int_r^s S_{m,p}(\tau) dW_{\tau}^m\right|. \label{eq:M*}
\end{align}
Finally the we define $\langle M_{[r, \cdot ]p} \rangle_t$ to be the quadratic variation of $M_{[r,t],p}$ and recall that (see e.g. \cite{KaratzasShreve})
\begin{align}
  \langle M_{[r, \cdot ]p} \rangle_t =  \int_{r}^{t}  \sum_m S_{m,p}^2 ds. \label{eq:MQuadVar}
\end{align}
Recall that by a version of the Burkholder-Davis-Gundy inequality given in \cite{DenisMatoussiStoica2005} (see also \cite{RevuzYor1999})
we have that, for any non-negative random variable $Z$, any $r < t$ and any $\delta >0$
\begin{align}
   \E  (M_{[r,t],p}^* \vee Z)^\delta \leq C_{BDG}(\delta) \cdot \E ( \langle M_{[r, \cdot ]p} \rangle_t^{1/2} \vee Z)^\delta.
   \label{eq:SuperBDG}
\end{align}
Here the constant $C_{BDG}(\delta)$ is universal; it depends only on $\delta$ and is independent of the form of the Martingale
$M_{[r,t],p}^*$ or $Z$.  Also, note carefully that there exists a $\delta_0 >0$ such that
\begin{align}
  C_{BDG}(\delta) \leq 2^{\delta^{1/2}} \quad \textrm{ whenever } \delta < \delta_0.  \label{eq:BDGCons}
\end{align}
This observation will be crucial below in estimates \eqref{eq:Ito:Moser:FINAL} and \eqref{eq:MoserInterationOutcome}.

We return to \eqref{eq:Ito:p:s} and take an average of \eqref{eq:Ito:p:s} for
$s \in [T_{k},T_{k+1}]$, and obtain
\begin{align}
\| \Vort \|_{L^\infty(I_{k+1};L^p)}^p&  - p  \int_{I_{k+1}} T_{2,p}(\tau) d\tau
	\notag\\
&\leq
	\frac{1}{T_{k+1} - T_{k}} \int_{T_{k}}^{T_{k+1}} \|\Vort\|_{L^p(I_{k};L^p)}^p +  \frac{p(p-1)}{2} \int_{I_{k}} |T_{3,p}(\tau)| d\tau
	+ 2p   M_{[T_k,2 T],p}^* \notag\\
&\leq
	\frac{1}{(T_{k+1} - T_{k})^{1/2}}  \|\Vort\|_{L^{2p}(I_{k};L^p)}^p + \frac{p(p-1)}{2} \int_{I_{k}} |T_{3,p}(\tau)| d\tau
	+ 2p   M_{[T_k,2 T],p}^*.
	\label{eq:Ito:p}
\end{align}

As usual in Moser iteration arguments, the lower bound on $- T_{2,p}$ is obtained by introducing $v = | \Vort |^{p/2}$, so that  $\|\Vort\|_{L^{p}}^p = \| v \|_{L^{2}}^{2}$, and
  $| \nabla v |^{2} = \frac{p^{2}}{4} |\nabla \Vort|^{2} |\Vort|^{p-2}$.
Then, upon integrating by parts in $T_{2,p}$ we have, pointwise in time, that
\begin{align}
  - p T_{2,p} =  p(p-1) \int_{\TT} |\nabla \Vort|^{2} |\Vort|^{p-2} dx
  	      =  4\frac{p-1}{p} \| \nabla v \|_{L^2}^{2} \geq 2 \|\nabla v\|_{L^2}^{2}
	      \label{eq:T2:positive}
\end{align}
for all $p \geq 2$.
Moreover, since we are in a two dimensional periodic box, the Sobolev embedding gives
\begin{align}
\frac{1}{2 C_s} \| v \|_{L^{2^*}}^2 \leq  \|\nabla v\|_{L^2}^2 +   \|v\|_{L^2}^2
	      \label{eq:MEst:1}
\end{align}
where $2^* \in [2, \infty)$ is arbitrary, and the constant $C_{S}>0$ depends only
on the the size of the box  and the choice of $2^*$. Note that $v$ is not zero mean in space and hence we need to add here a lower order term in \eqref{eq:MEst:1}. Let us choose $2^* = 4$ for simplicity.
Then, in view of \eqref{eq:T2:positive} and \eqref{eq:MEst:1}, the left hand side of \eqref{eq:Ito:p} is bounded from below as
\begin{align}
\|v\|_{L^\infty(I_{k+1};L^2)}^2 + 2  \|\nabla v\|_{L^2(I_{k+1};L^2)}^2
\geq \|v\|_{L^\infty(I_{k+1};L^2)}^2 - 2 \|v\|_{L^2(I_{k+1};L^2)}^2  + 2  \left( \|v\|_{L^2(I_{k+1};L^2)}^2 + \|\nabla v\|_{L^2(I_{k+1};L^2)}^2 \right).\label{eq:Sobolev:bound:0}
\end{align}
By assuming that
\begin{align}
{4  |I_{k+1}| =4 (2T - T_{k+1}) \leq 1}, \label{eq:assume:nu:T}
\end{align}
which is automatically satisfied for all $k \geq 0$ if we ensure that
\begin{align}
T \leq \frac{1}{8} \label{eq:assume:nu:T:1},
\end{align}
we conclude from \eqref{eq:Sobolev:bound:0} that
\begin{align}
\|v\|_{L^\infty(I_{k+1};L^2)}^2 + 2  \|\nabla v\|_{L^2(I_{k+1};L^2)}^2
&\geq \|v\|_{L^\infty(I_{k+1};L^2)}^2 \left( 1 - 2  |I_{k+1}|\right) + \frac{1}{C_s}   \| v \|_{L^2(I_{k+1};L^{4})}^2\notag\\
&\geq \frac 12 \|v\|_{L^\infty(I_{k+1};L^2)}^2 + \frac{1}{C_s}  \| v\|_{L^2(I_{k+1};L^{4})}^{2}.
\label{eq:Sobolev:bound}
\end{align}

Let us now recall the following $L^{p}_t L^{q}_x$ interpolation inequality.  Suppose we have
$1\leq p_{1}, p_{2}, q_{1}, q_{2}, r_{1}, r_{2} \leq \infty$ and $0 \leq \gamma \leq 1$ satisfy
\begin{align}
 \frac{1}{r_{1}} = \frac{\gamma}{p_{1}} + \frac{1-\gamma}{q_{1}}, \quad  \frac{1}{r_{2}} = \frac{\gamma}{p_{2}} + \frac{1-\gamma}{q_{2}}.
    \label{eq:MEst:InterpRange}
\end{align}
Then, for any $g  \in L^{p_{1}}(I; L^{p_{2}}) \cap  L^{q_{1}}(I; L^{q_{2}}) $ we have
\begin{align}
    \| g \|_{L^{r_{1}}(I; L^{r_{2}})}  \leq   \| g \|_{L^{p_{1}}(I; L^{p_{2}})}^{\gamma}  \| g \|_{L^{q_{1}}(I; L^{q_{2}})}^{1-\gamma}
    \label{eq:MEst:Interp}
\end{align}
with $I\subset \RR$ being some interval. Taking $r_{1} = 5$, $r_{2} = 5/2$, $p_{1} = \infty$, $p_{2} = 2$, $q_{1} = 2$, $q_{2} =4$, and $\gamma =3/5$ in this inequality we find
\begin{align}
\frac{1}{2 C_S^{2/5}}  \| v \|_{L^{5}(I_{k+1}; L^{5/2})}^{2}
\leq& \frac{1}{2 C_S^{2/5}}  \| v\|_{L^{\infty}(I_{k+1}; L^{2})}^{6/5} \| v \|^{4/5}_{L^{2}(I_{k+1}; L^{4})}
       \leq \frac 12   \| v\|_{L^{\infty}(I_{k+1}; L^{2})}^{2} + \frac{1}{C_s} \| v \|^{2}_{L^{2}(I_{k+1}; L^{4})}         	
       \label{eq:MEst:5}
\end{align}
by making use of the $\eps$-Young inequality. In summary, we have shown that the left hand side of \eqref{eq:Ito:p}
is bounded from below by
\begin{align}
\frac{1}{C_S'} \| v\|_{L^5(I_{k+1};L^{5/2})}^2
= \frac{1}{C_S'} \| \Vort\|_{L^{5p/2}(I_{k+1};L^{5p/4})}^{p}
= \frac{1}{C_S'} \| \Vort\|_{L^{2 \lambda p}(I_{k+1};L^{\lambda p})}^{p} \label{eq:Moser:lower:final}
\end{align}
as long as \eqref{eq:assume:nu:T} holds and for any $p \geq 2$. Here have denoted $C_S' = 2 C_S^{2/5} \vee 1$, and denoted
\begin{align}
\lambda = \frac 54 .\label{eq:lambda:def}
\end{align}

For the term $T_{3,p}$-term on the left side of \eqref{eq:Ito:p} we simply use H\"older and obtain (pointwise in time)
\begin{align*}
\frac{p(p-1)}{2} T_{3,p}
\leq  \frac{p(p-1)}{2}\| \Vort \|_{L^{p}}^{p-2} \|\sigma\|_{L^{p}}^{2} .
\end{align*}
Integrating the above on $I_{k}$ and using the H\"older inequality in time we obtain
\begin{align}
 \frac{p(p-1)}{2} \int_{I_{k}} T_{3,p}(\tau) d\tau
&\leq  \frac{p(p-1)}{2} \| \sigma\|_{L^p}^2 |I_k|^{\frac{p+2}{2p}} \| \Vort\|_{L^{2p}(I_{k};L^p)}^{p-2}
\label{eq:MEst:2}
\end{align}
 for all $k \geq 0$. Thus from \eqref{eq:Ito:p}, \eqref{eq:Moser:lower:final}, and \eqref{eq:MEst:2}, we obtain
\begin{align}
 \frac{1}{C_S'} \left( \| \Vort\|_{L^{2\lambda p}(I_{k+1};L^{\lambda p})}\vee \| \sigma\|_{L^\infty} \right)^{p}
&\leq \frac{1}{C_S'} \left( \| \Vort\|_{L^{2\lambda p}(I_{k+1};L^{\lambda p})}^{p} + \| \sigma\|_{L^\infty}^{p}\right) \notag\\
& \leq  \frac{1}{(T_{k+1}- T_{k})^{1/2}}  \|\Vort\|_{L^{2p}(I_{k};L^p)}^p
+  |\TT|^{2/p} p^2  |I_{k}|^{\frac{p+2}{2p}} \| \sigma\|_{L^\infty}^2  \| \Vort\|_{L^{2p}(I_{k};L^p)}^{p-2} \notag\\
&\qquad  \qquad \qquad \qquad \qquad \qquad + \| \sigma\|_{L^\infty}^{p}
+ 2 p  M_{[T_k, 2T],p}^*   \label{eq:Ito:p:Moser}
\end{align}
with $M^*_{[T_{k},2T],p}$ as defined in \eqref{eq:M*},
and we have used that $\|\sigma\|_{L^p} \leq |\TT|^{1/p} \|\sigma\|_{L^\infty}$.

Let us now define
\begin{align}
   \kappa(p,T) :=
  4 C_S'\left( \frac{ 1 }{(T_{k+1} -T_{k})^{1/2}} + |\TT|^{2/p}  p^2  |I_{k}|^{\frac{p+2}{2p}} + 1 + 2 p  |\TT|^{\frac{1}{p}} |I_{k}|^{\frac{1}{2p}}\right)
  \label{eq:kappa:Con1}
\end{align}
After some direct manipulations starting from \eqref{eq:Ito:p:Moser}, taking $p^{th}$ roots of both sides and then expectations we find
that
\begin{align}
&\E \left( \| \Vort\|_{L^{2\lambda p}(I_{k+1};L^{\lambda p})} \vee \| \sigma\|_{L^\infty}\right) \notag\\
&\qquad \leq   \kappa(p,T)^{\frac{1}{p}}
\E \left( \Bigl( \|\Vort\|_{L^{2p}(I_{k};L^p)} \vee
            \| \sigma\|_{L^\infty} \Bigr)^{p} \vee
             |\TT|^{-\frac{1}{p}} |I_{k}|^{-\frac{1}{2p}} M_{[T_k, 2T],p}^*\right)^{\frac 1 p}
            \notag\\
&\qquad \leq \kappa(p,T)^{\frac{1}{p}}  C_{BDG}(p^{-1})
	\E \left( \Bigl( \|\Vort\|_{L^{2p}(I_{k};L^p)} \vee
            \| \sigma\|_{L^\infty}\Bigr)^{p} \vee  |\TT|^{-\frac{1}{p}} |I_{k}|^{-\frac{1}{2p}}
            \langle M_{[T_k, \cdot ],p}\rangle_{2T}^{1/2}\right)^{\frac{1}{p}}
            \label{eq:Ito:Moser}
\end{align}
which holds for all $p\geq 2$.   Note that for the second inequality we used \eqref{eq:SuperBDG}.

We next estimate the quadratic variation term, $\langle M_{[T_k, \cdot ],p}\rangle_{2T}^{1/2}$  in \eqref{eq:Ito:Moser}.    Starting from \eqref{eq:LpEst:Skp} and \eqref{eq:MQuadVar} we find
\begin{align}
  \langle M_{[T_k, \cdot ],p}\rangle_{2T} ^{1/2} =&
   \left( \int_{T_k}^{2T}  \sum_m S_{m,p}^2 dt\right)^{\frac{1}{2}}
	 \leq \left(  \int_{I_{k}}  \left(\int_{\TT} (\sum_{m} \sigma_{m}^2)^{1/2} |\Vort|^{p-1}dx\right)^{2} dt\right)^{\frac{1}{2}}
	 \notag\\
	\leq&  \left(  \int_{I_{k}}  \|\sigma\|_{L^{p}}^{2} \| \Vort \|_{L^{p}}^{2(p-1)} dt\right)^{\frac{1}{2}}
	\leq  | I_k|^{\frac{1}{2p}}  \|\sigma\|_{L^p}   \|\Vort\|_{L^{2p}(I_k;L^p)}^{p-1}
	\notag\\
	\leq&  |\TT|^{\frac{1}{p}} |I_{k}|^{\frac{1}{2p}}  \|\sigma\|_{L^\infty}  \|\Vort\|_{L^{2p}(I_k;L^p)}^{p-1} \leq  |\TT|^{\frac{1}{p}} |I_{k}|^{\frac{1}{2p}}  \left( \|\sigma\|_{L^\infty}  \vee \|\Vort\|_{L^{2p}(I_k;L^p)}\right)^{p}.
	\label{eq:Stochastic:Term}
\end{align}
Note that the second bound above makes use of the integral Minkowski inequality.

Let us now summarize the estimates obtained, by combining \eqref{eq:Ito:Moser} with \eqref{eq:Stochastic:Term}. We have
\begin{align}
 \E \left( \| \Vort\|_{L^{2\lambda p}(I_{k+1};L^{\lambda p})} \vee \| \sigma\|_{L^\infty}\right)
 \leq&   \kappa(p,T)^{\frac{1}{p}}  C_{BDG}(p^{-1})
	\E \left( \|\Vort\|_{L^{2p}(I_{k};L^p)} \vee \|\sigma\|_{L^{\infty}}\right)
\label{eq:Ito:Moser:FINAL}
\end{align}
for all $p\geq 2$. To set up a recurrence relation, it is hence natural to set $p=p_k$ in \eqref{eq:Ito:Moser:FINAL}, where we define
\[
p_{k} = 2 \lambda^{k}
\]
for all $k \geq 0$, where we recall that $\lambda  = 5/4$.
Let us now introduce some notation
\begin{align}
A_k &= \E \left( \|\Vort\|_{L^{2 p_k}(I_{k},L^{p_k})} \vee   \|\sigma\|_{L^\infty} \right) \label{eq:Ak}\\
a_k &=  \kappa(p_k,T)^{\frac{1}{p_k}}  C_{BDG}(p_k^{-1}).
\label{eq:ak}
\end{align}
Then, \eqref{eq:Ito:Moser:FINAL} reads
\begin{align}
A_{k+1} \leq a_{k} A_{k}.
\label{eq:recursion:1}
\end{align}
So that
\begin{align}
  \E \sup_{t\in [T,2T]}\| \Vort(t,\cdot) \|_{L^\infty}
\leq A_{\infty} \leq \left(\prod_{k \geq 0} a_k \right)\E \left( \|\Vort \|_{L^4([0,2T];L^2)} \vee \|\sigma\|_{L^\infty} \right).
  \label{eq:MoserInterationOutcome}
\end{align}
In view of \eqref{eq:BDGCons}, we have that
\begin{align}
\prod_{k \geq 0} a_k  \leq& C \exp\left( \sum_{k \geq 0} \frac{\log  \kappa( p_k, T) }{p_k} \right)
\end{align}
where C is a $\nu$- and $T$-independent constant.
We now set
\[
T_k = T (1- \lambda^{-k}).
\]
Then, $T_{k+1}-T_k = T \lambda^{-k} (1+\lambda^{-1}) \geq T \lambda^{-k} / 2 = T p_k^{-1}$.
We recall the definition of $\kappa(p_k,T)$ from \eqref{eq:kappa:Con1}, which in view of the above choices may be bounded as
\begin{align}
\kappa(p_k,T) &\leq 4 C_S'\left( T^{-\frac{1}{2}} p_k^{\frac 12}+
	|\TT|^{\frac{2}{p_k}}  p_k^2  (2T)^{\frac{p_k+2}{2p_k}} + 1 +
	2 p_k   |\TT|^{\frac{1}{p_k}} (2T)^{\frac{1}{2p_k}}\right)
		\leq C p_k^2  \left( T^{- \frac 12}  +  1 \right)
\end{align}
where we have also used that $| I_k| \leq 2T \leq 1/4$ (cf.~\eqref{eq:assume:nu:T:1}), and $C$ is a sufficiently large $T$-independent constant. Using that $\sum_{k \geq 0} p_{k}^{-1} = 5/2$, and $\sum_{k\geq 0} p_k^{-1} \log p_k < \infty$ we may further obtain that
\begin{align}
\prod_{k \geq 0} a_k  \leq& C (T^{-1/2} + 1)^{5/2} \leq  C (T^{-5/4} + 1) \label{eq:product:ak}
\end{align}
for some sufficiently large $\nu$- and $T$-independent constant $C$.

In summary from \eqref{eq:MoserInterationOutcome},
 \eqref{eq:product:ak} and recalling that these estimates were carried out for the rescaled equation \eqref{eq:SNSEVortRescale} above we have in conclusion
\begin{align}
  \E \sup_{t\in [T,2T]}\| \tilde{\Vort}(t) \|_{L^\infty}
\leq C(T^{-5/4} + 1)\E \left( \|\tilde{\Vort} \|_{L^4([0,2T];L^2)} \vee \|\sigma\|_{L^\infty} \right),
	\label{eq:FinalGeneralDataRescale}
\end{align}
for any $T \leq 1/8$ and where $C$ is independent of $T$ and $\nu$.  Rescaling to the original variable $ \Vort(t) = \tilde{\Vort}(\nu t)$
then with \eqref{eq:FinalGeneralDataRescale} we infer
\begin{align}
  \E \sup_{t\in [T/\nu,2T/\nu]}\| \Vort(t) \|_{L^\infty}
  \leq C(T^{-5/4} + 1)\left( \left( \int_{0}^{T/\nu} \nu\E (\|\Vort (s) \|_{L^2}^{4} ) ds \right)^{1/4} + \|\sigma\|_{L^\infty}\right)
\end{align}
for any $T \leq 1/8$.

We can now obtain the desired conclusion by taking  $\Vort$ to be $\Vort_{S}^{\nu}$ a stationary solution of \eqref{eq:SNSEVort} corresponding to $\mu_{\nu}$. Recalling \eqref{eq:ExpMoment} and taking, for example $T = 1/8$ we have that
\begin{align}
 \E \| \Vort_{S}^\nu \|_{L^\infty}  \leq& \E  \sup_{t\in [1/(8\nu),1/(4\nu)]}\| \Vort_{S}^{\nu}(t) \|_{L^\infty} \notag\\
  \leq&
     C\left( \left( \int_{0}^{1/(8\nu)} \nu\E (\|\Vort_{S}^\nu(s) \|_{L^2}^{4} ) ds \right)^{1/4} + \|\sigma\|_{L^\infty}\right) \notag\\
     \leq&  C\left( \left( \int_{0}^{1/(8\nu)} \frac{2\nu}{\delta^2}\E \exp(\delta \|\Vort_{S}^\nu(s) \|_{L^2}^{2} ) ds \right)^{1/4} + \|\sigma\|_{L^\infty}\right)
     \leq C_\sigma
\end{align}
for a constant $C_\sigma$ independent of $\nu$.  This gives \eqref{eq:thm:Moser} concluding the proof of Theorem~\ref{thm:Moser}.
\end{proof}

\begin{remark}[{\bf Linear drift-diffusion}]\label{rem:Linear}
 Note that the $L^{\infty}$ bounds obtained in Theorem~\ref{thm:Moser} can also be shown to hold for any drift-diffusion equation
  \begin{align*}
 	d \Vort +( \boldsymbol{a} \cdot \nabla \Vort - \Delta \Vort)dt = \sigma dW, \quad \nabla \cdot \boldsymbol{a} = 0,
 \end{align*}
 with sufficiently regular drift $\boldsymbol{a}$ and stochastic forcing $\sigma$.   Indeed as in \eqref{eq:Ito:Lp} one may write the evolution of the $L^{p}$-norm of $\Vort$ with the
 analogous drift-term vanishing since $\boldsymbol{a}$ is divergence free.  The rest of the proof follows without any change and one obtains that
 \begin{align*}
  \E \sup_{t\in [T,2T]}\| \Vort(t) \|_{L^\infty}
\leq C\left(1 + T^{-5/4} \right)\E \left( \|\Vort \|_{L^4([0,2T];L^2)} \vee \|\sigma\|_{L^\infty} \right),
\end{align*}
for any $0< T \leq 1/8$ and most importantly $C$ is independent $\boldsymbol{a}$.  Note also that this estimate
corresponds to the usual parabolic regularization in the deterministic case: $L^{2}$ weak-solutions are instantaneously
in $L^{\infty}$.
\end{remark}

\begin{remark}[{\bf Fractional Navier-Stokes}]\label{rem:fractional}
Note that the Moser iteration technique used to prove Theorem~\ref{thm:Moser} may be used to obtain drift-independent $L^\infty$
bounds for stationary solutions of the fractional drift-diffusion equation
\begin{align*}
d \Vort + (\boldsymbol{a} \cdot \nabla \Vort + (-\Delta)^{\gamma/2} \Vort)dt = \sigma dW,
\end{align*}
for any power $\gamma \in (0,2)$, where as in Remark~\ref{rem:Linear} the drift $\boldsymbol{a}$ is divergence-free and sufficiently smooth.
To see this, we recall the $L^p$ lower bound on the fractional Laplacian given in \cite{CordobaCordoba2004}
\begin{align*}
p \int \Vort |\Vort|^{p-2} (-\Delta)^{\gamma/2} \Vort  dx \geq \int | (-\Delta)^{\gamma/4} \left( |\Vort|^{p/2} \right) |^2 dx
\end{align*}
which holds for any $p \geq 2$. Using the 2D Sobolev embedding $H^{\gamma/2} \subset L^{4/(2-\gamma)}$, one may repeat the argument given above in \eqref{eq:T2:positive}--\eqref{eq:lambda:def}, and obtain estimate \eqref{eq:Moser:lower:final} with $\lambda_\gamma = 1 + \gamma/4$. Since for any $\gamma \in (0,2)$ we have $\lambda_\gamma^n \to \infty$ as $n\to \infty$ the Moser iteration scheme may be completed mutatis-mutandis. In particular, setting $\boldsymbol{a} = \frac{1}{\nu} \boldsymbol{u}$, which is divergence-free, in view of \eqref{eq:SNSEVortRescale} one may use the above argument to study inviscid limits of the stochastic fractionally-dissipative Navier-Stokes equation.
\end{remark}

\section{Modulus of continuity for the deterministic stationary problem}\label{sec:deterministic}

In Section~\ref{sec:Moser} we have proven that the stationary solution $\Vort_{S}^{\nu}$ of \eqref{eq:SNSEVortRescale}
obeys $\nu$-independent bounds in $L^\infty$, that is $\E \|\Vort_{S}^\nu\|_{L^\infty}$ is uniformly bounded in $\nu$.
The key ingredients used in this argument were
\begin{itemize}
 \item {\em Two-dimensionality}: this ensures that the nonlinear term, whose size blows up (in comparison to the viscosity)
 as $\nu \to 0$, vanishes altogether in $L^p$ estimates for the vorticity. To put it differently, there is no vorticity stretching term.
 \item {\em Stationarity}: this enables us to measure the $L^\infty$ norm of the solution whose initial data is $\Vort_{S}^\nu$ at
 time $T_\nu \approx \nu^{-1}$, and hence obtain bounds on $\Vort_S^\nu$ itself.
\end{itemize}
Once we wish to estimate $\Vort_S^\nu$ in more regular spaces, for example $H^s$ with $s>1$, or $C^\gamma$ with $\gamma > 0$,
the nonlinear term does not vanish anymore, and since it's relative size becomes prohibitively large as $\nu\to 0$, we do not seem to
be able to obtain $\nu$-independent bounds on $\Vort_S^\nu$, in spaces that are better than $L^\infty$ (averaged over the probability space).

In this section we exhibit an {\em drift-independent} bound, in a better norm than $L^\infty$, of solutions to the stationary drift-diffusion equation
\begin{align}
L v = - \Delta v + b \cdot \nabla v = f \label{eq:2D:steady:drift}
\end{align}
for $x \in \TT$, where $b = b(x)$ is a divergence free-vector field, but on which we have {\em no bounds}. The force is assumed to
be in $L^\infty$, with zero-mean, and we consider solutions $v$ such that $\int_{\TT} v dx = 0$.
We view equation \eqref{eq:2D:steady:drift} as a {\em deterministic} toy-model describing the {\em stationary} solutions of \eqref{eq:SNSEVortRescale}
-- the analogy is given by letting $b = \nu^{-1} \bfU$, $v = \Vort$, and noting that $\Vort(0)$ equals $\Vort(t)$ in law, for all $t \geq 0$.

Following the ideas in~\cite{SSSZ12} in the spirit of \cite{Lebesgue1907} we show that $v$ obeys a logarithmic modulus of continuity which does not depend on the size of the drift $b$. In particular $v$ is a uniformly continuous function.

\begin{theorem}[{\bf Modulus of continuity for deterministic stationary equation}] \label{thm:elliptic:MOC}
Let $b$ be divergence free and smooth, and $v$ be a zero mean weak solution of \eqref{eq:2D:steady:drift}, that is, $v\in H^1$ and satisfies \eqref{eq:2D:steady:drift} in the sense of distributions. Then $v$ obeys a drift-independent logarithmic modulus of continuity
 \begin{align}
\sup_{|x-y|\leq r} |v(x) - v(y)| \leq \frac{C (\|f\|_{L^\infty})}{\sqrt{\log 1/r}} \label{eq:steady:log:MOC}
\end{align}
for some $C>0$ that is independent of $b$ and all $r \in (0, r_*]$, for some universal constant $r_*$. In particular, $v$ is uniformly continuous.
\end{theorem}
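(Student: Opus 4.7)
The plan is to follow a De Giorgi--Nash--Moser strategy, carrying out each energy estimate so as to exploit $\div b = 0$ and therefore produce constants independent of $b$. The proof naturally splits into a drift-independent $L^\infty$ bound on $v$ and an oscillation decrement lemma on small balls, which iterated across dyadic scales yields \eqref{eq:steady:log:MOC}.

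For the $L^\infty$ bound, test \eqref{eq:2D:steady:drift} against $v$. Since $b$ is divergence-free,
\begin{align*}
\int_\TT b\cdot\nabla v\, v\,dx = \tfrac12 \int_\TT b\cdot\nabla(v^2)\,dx = 0,
\end{align*}
so $\|\nabla v\|_{L^2}^2 = \int fv$, which by Poincar\'e (since $v$ is mean zero) gives $\|v\|_{H^1} \leq C\|f\|_{L^2}$. An elliptic Moser iteration on the truncations $(v-k)_\pm$, entirely parallel to the scheme in the proof of Theorem~\ref{thm:Moser} but without stochastic forcing, then upgrades this to $\|v\|_{L^\infty} \leq C_0(\|f\|_{L^\infty})$, with $C_0$ independent of $b$, since the $L^p$ identity analogous to \eqref{eq:Ito:Lp} also loses the drift term to an integration by parts.

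For the oscillation decrement, fix $x_0\in\TT$ and $r>0$ small. After subtracting a constant and rescaling by the oscillation of $v$ on $B_{2r}(x_0)$, assume $-\tfrac12\leq v\leq \tfrac12$ on $B_{2r}(x_0)$ and, replacing $v$ by $-v$ if necessary, that $|\{v\leq 0\}\cap B_r(x_0)|\geq \tfrac12 |B_r(x_0)|$. The goal is to prove a De Giorgi-type lemma of the form
\begin{align*}
\sup_{B_{r/2}(x_0)} v \leq \tfrac12 - \eta(r),
\end{align*}
with $\eta(r)$ a positive, $b$-independent function vanishing logarithmically as $r\to 0$ (the inhomogeneity $f$ contributing only a lower-order term after restricting to $r\leq r_*$). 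Iterating this decrement across dyadic scales and combining with the $L^\infty$ bound then yields \eqref{eq:steady:log:MOC}, the square-root rate emerging from the bookkeeping between the per-scale gain and the accumulated error.

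The main obstacle---and the reason the SSSZ12 ideas are essential---is proving the De Giorgi lemma without any quantitative hypothesis on $b$. In the Caccioppoli estimate for $(v-k)_+\chi^2$ the drift does not cleanly cancel: integrating by parts leaves a residual $-\tfrac12 \int (v-k)_+^2\, b\cdot\nabla(\chi^2)\,dx$, which is a priori of the size of $b$. The two-dimensional device is to use the stream function $b = \nabla^\perp \psi$ (globally defined on $\TT$ for mean-zero divergence-free $b$), rewrite this term as $-\tfrac12 \int (v-k)_+^2\, \nabla^\perp \psi \cdot \nabla(\chi^2)\,dx$, and integrate by parts a second time so that only the oscillation of $\psi$ on the annular support of $\nabla\chi$---not $\|\psi\|_{L^\infty}$ itself---enters the estimate. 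Even this oscillation is not drift-independent, but following SSSZ12 one absorbs it into the iteration by trading the expected algebraic level-set decay for a logarithmic decay, which is exactly what forces the $1/\sqrt{\log}$ rate in \eqref{eq:steady:log:MOC}. This absorption is the technical heart of the argument; it is irreducibly two-dimensional, since no analogous stream-function structure is available for $d\geq 3$, where indeed such drift-independent continuity statements are known to fail.
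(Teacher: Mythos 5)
Your opening energy estimate ($\|\nabla v\|_{L^2}\le C\|f\|_{L^2}$ via $\div b=0$ and Poincar\'e) is fine and coincides with the first step of the paper's proof, but the core of your argument has a genuine gap: the De~Giorgi oscillation-decrement lemma with a drift-independent gain $\eta(r)$ cannot be obtained by the stream-function device you describe. After writing $b=\nabla^\perp\psi$ and integrating by parts twice in the Caccioppoli inequality, the residual term is controlled by $\osc(\psi)$ on the annulus supporting $\nabla\chi$ times essentially the same energy quantities you are trying to bound; absorbing the gradient piece by Young's inequality leaves a multiplicative factor of order $1+(\osc\psi)^2$ in the Caccioppoli constant. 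For an arbitrary smooth divergence-free $b$ with no size restriction, $\osc_{B_r}\psi$ at any fixed scale $r$ is completely uncontrolled, so no trade of algebraic for logarithmic level-set decay removes it: the constant enters multiplicatively at every scale and can already be made arbitrarily large at the first one. That trick genuinely requires $\psi\in BMO$, i.e. $b\in BMO^{-1}$, which is the setting of a different part of \cite{SSSZ12}, not of the present theorem.

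The paper (following the 2D result of \cite{SSSZ12}, in the spirit of \cite{Lebesgue1907}) avoids localized energy estimates for $v$ altogether. Its two drift-independent inputs are: (a) the \emph{global} energy bound $\|\nabla v\|_{L^2}^2\le C\|f\|_{L^2}^2$, which, written in polar coordinates around $x_0$ and combined with the one-dimensional Sobolev embedding $H^1(\partial B_\rho)\subset C^{\alpha}(\partial B_\rho)$, $\alpha<1/2$, yields $\int_r^{\sqrt r}(\osc_{\partial B_\rho}v)^2\,\rho^{-1}\,d\rho\le C\|f\|_{L^2}^2$; and (b) the \emph{maximum principle} for $L=-\Delta+b\cdot\nabla$, valid for any drift, which shows that $v+h$ (with $Lh=-f$ in $B_\rho$, $h=0$ on $\partial B_\rho$) attains its extrema on $\partial B_\rho$, so that $v$ is monotone in the sense of Lebesgue up to the error $\osc_{B_\rho}h\le C\|f\|_{L^\infty}\rho^2$; this last bound is the only place a Moser iteration is used, and there the drift term is harmless because $h$ vanishes on the boundary. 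Combining (a) and (b), the interior oscillation $\osc_{B_r}v$ is dominated, up to an $O(\rho^2)$ error, by $\osc_{\partial B_\rho}v$ for every $\rho\in[r,\sqrt r]$, and integrating against $d\rho/\rho$ forces $\osc_{B_r}v\le C(\log 1/r)^{-1/2}$. Note that the two-dimensionality enters through the exponent bookkeeping in (a), not through the existence of a stream function. You should replace your oscillation-decrement lemma by this monotonicity argument; the rest of your outline can stay.
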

\begin{proof}[Proof of Theorem~\ref{thm:elliptic:MOC}]
 Taking the inner product of \eqref{eq:2D:steady:drift}, using the Poincar\'e inequality and the fact that $\nabla \cdot b = 0$, we obtain
\begin{align*}
\frac{1}{C} \| v\|_{L^2}^2 + \frac{1}{2} \|\nabla v\|_{L^2}^2 \leq \|f\|_{L^2} \|v\|_{L^2}
\end{align*}
which implies
\begin{align}
\| \nabla v \|_{L^2}^2 \leq C_1 \|f\|_{L^2}^2 \label{eq:steady:H1}
\end{align}
for some $C_1>0$ that is independent of $b$. We as usual denote $\osc_K v = \max_K v - \min_K v$ to be the oscillation of $v$ over the set $K$. Fix $x_0= 0$,
$r_0 = {\rm diam}(\TT)/4 \wedge 1/2$, and let $B_r = B_{r}(x_0)$ for any $r \leq r_0$. Upon integrating in polar coordinates, dropping the normal derivatives, and
using that by the 1D Sobolev embedding $H^1(\partial B_r) \subset C^\alpha(\partial B_r)$ for $\alpha \in (0,1/2)$ (see~\cite[Theorem~4.2]{SSSZ12}), we obtain
\begin{align}
C_1 \|f\|_{L^2}^2 \geq \int_{r}^{\sqrt{r}} \frac{ ( \osc_{\partial B_\rho} v)^2}{\rho} d\rho, \label{eq:steady:osc:1}
\end{align}
for any $r \in (0,r_0^2]$.

If we were able to establish that $v$ is monotone in the sense of Lebesgue, i.e. to show that $\osc_{\partial B_\rho} v$ is a
monotone function of $\rho$, the proof of the lemma would directly follow from \eqref{eq:steady:osc:1}.
Instead we prove that $v$ is {\em almost} monotone in the sense of Lebesgue, that is, up to an error of size $r^2$. Let $h$ solve
\begin{align*}
& L h = - f  \mbox{ in } B_r, \qquad h = 0 \mbox{ on } \partial B_r
\end{align*}
so that $L( v + h )  = 0$ in $B_r$, and hence by the maximum principle
\begin{align}
\osc_{B_r} (v+h) = \osc_{\partial B_r} (v+h)  = \osc_{\partial B_r} v. \label{eq:steady:osc:2}
\end{align}
We claim that
\begin{align}
\osc_{B_r} h \leq C \|f\|_{L^\infty(B_r)} r^2 \label{eq:steady:osc:3}
\end{align}
for some constant $C>0$ that is independent of  $b$. To prove \eqref{eq:steady:osc:3}, we rescale the problem to the unit ball by letting
\begin{align*}
 x = ry, \tilde{h}(y) = h(ry), \tilde{b}(y) = b(x), \tilde{f}(y) = f(x).
\end{align*}
It follows that
\begin{align}
- \Delta_y \tilde h + r \tilde b \cdot \nabla_y \tilde h = - r^2 \tilde f \mbox{ in } B_1, \qquad \tilde h = 0 \mbox{ on } \partial B_1.\label{eq:steady:tilde:h}
\end{align}
We obtain the desired estimate by Moser iteration. Multiplying \eqref{eq:steady:tilde:h} by $\tilde h |\tilde h|^{p-2}$ and integrating over $B_1$ we obtain that for any $p\geq 2$
\begin{align*}
\frac{4 (p-1)}{p^2} \int_{B_1} \left| \nabla ( |\tilde h|^{p/2}) \right|^2 dx = (p-1) \int_{B_1} |\nabla \tilde h|^2 |\tilde h|^{p-2} dx \leq r^2 |B_1|^{1/p} \| \tilde f\|_{L^\infty} \|\tilde h\|_{L^p}^{p-1}
\end{align*}
by using that $\nabla \cdot \tilde b = 0$, and that $\tilde h$ vanishes on $\partial B_1$. Using the Sobolev embedding $H^1 \subset L^4$ in 2D, we get
\begin{align}
\|\tilde h \|_{L^{2p}}^p \leq C p |B_1|^{1/p} r^2 \|\tilde f\|_{L^\infty} \|\tilde h\|_{L^p}^{p-1} \label{eq:steady:Moser:1}
\end{align}
for some $C>0$ which is independent of $p$. Without loss of generality we take this constant $C$ sufficiently large so that $2 C |B_1|^{1/p} \geq 1$ for any $p\geq 2$. Let
\begin{align*}
p_k = 2^k \mbox{ and } a_k = \max \left\{ \|\tilde h \|_{L^{p_k}} , r^2 \|\tilde f\|_{L^\infty} \right\}.
\end{align*}
It follows from \eqref{eq:steady:Moser:1} that
\begin{align*}
a_{k+1} \leq \left( C p_k |B_1|^{1/p_k}\right)^{1/p_k} a_k
\end{align*}
and thus
\begin{align}
\|\tilde h \|_{L^\infty(B_1)} \leq C \left( r^2 \|\tilde f\|_{L^\infty(B_1)} + \|\tilde h \|_{L^2(B_1)} \right) \leq C r^2 \|\tilde f\|_{L^\infty(B_1)} \label{eq:steady:Moser:2}
\end{align}
for some $C>0$. The last inequality above follows by setting $p=2$ in \eqref{eq:steady:Moser:1}, and using the H\"older inequality.
Upon rescaling back to $x$-variables, \eqref{eq:steady:Moser:2} implies \eqref{eq:steady:osc:3}.

Combining \eqref{eq:steady:osc:2} with \eqref{eq:steady:osc:3} gives that for any $r \leq \rho < r_0$ we have
\begin{align}
\osc_{\partial B_\rho} v {\geq \osc_{B_\rho} v - C \|f\|_{L^\infty} \rho^2} \geq \osc_{B_r} v - C_2 \|f\|_{L^\infty} \rho^2
\end{align}
for some $C_2 > 0$ that is independent of $b$. Inserting this bound in \eqref{eq:steady:osc:1} yields
\begin{align}
C_1 \|f\|_{L^2}^2 \geq \int_{r}^{\sqrt{r}} \frac{1}{\rho} \left( \osc_{B_r} v - C_2 \|f\|_{L^\infty} \rho^2\right)^2 d\rho \label{eq:steady:osc:4}.
\end{align}
We distinguish two cases, based on whether $\osc_{B_r} v$ is larger or smaller than $2 C_2 \|f\|_{L^\infty} r $.
When $\osc_{B_r} v \geq 2 C_2 \|f\|_{L^\infty} r  \geq 2 C_2 \|f\|_{L^\infty} \rho^2$, then \eqref{eq:steady:osc:3} implies that
\begin{align*}
 C_1 \|f\|_{L^2}^2 \geq \int_{r}^{\sqrt{r}} \frac{1}{\rho} \left( \frac{\osc_{B_r} v}{2} \right)^2 d\rho =\frac{(\osc_{B_r} v)^2}{8} \log \frac{1}{r},
\end{align*}
which implies
\begin{align*}
 \osc_{B_r} v \leq \frac{4 \sqrt{C_1}  \|f\|_{L^2}}{\sqrt{\log 1/r}}.
\end{align*}
On the other hand
\begin{align*}
\osc_{B_r} v \leq 2 C_2 \|f\|_{L^\infty} r  \leq \frac{2 C_2 \|f\|_{L^\infty} }{\sqrt{\log 1/r}}
\end{align*}
for any $r \leq r_0^2 \leq 1/2$. The above two estimates imply \eqref{eq:steady:log:MOC}.
One may repeat this argument with $x_0$ being any point in $\TT$, not just the origin, by periodically extending $v$ and $f$ to one more periodic cell, thereby concluding the proof.
\end{proof}

In contrast, the parabolic case is more delicate.
If we consider the linear problem
\begin{align*}
\partial_t v + b(x,t) \cdot \nabla v - \Delta v = f,
\end{align*}
even if $b$ is divergence-free, one may construct solutions that are not continuous functions for all time, although they obey the $L^\infty$ maximum principle. See e.g.~\cite{SVZ12} for an example with rough drift.
For the nonlinear problem
\begin{align*}
\partial_t \Vort + \nu^{-1} \bfU \cdot \nabla \Vort - \Delta \Vort = f,
\end{align*}
one may hope to prove that in some average sense, the
functions on the attractor remain continuous as $\nu \to 0$. At the moment we do not know how to prove this.

\section{The Damped and Driven Navier-Stokes Equations and Other Scaling}
\label{sec:DampScaling}

In this section we consider the weakly damped and driven stochastic Navier-Stokes equations
\begin{align}
 d \bfU + (Y \bfU + \bfU \cdot \nabla \bfU + \nabla \Pres - \nu \Delta \bfU)dt = \nu^\alpha \rho dW = \nu^\alpha \sum_{k} \rho_k dW^k,  \quad \nabla \cdot \bfU =0,
 \label{eq:DampedSNSE}
\end{align}
where $Y = Y_{\tau, \gamma} = \tau \Lambda^{-\gamma} = \tau (-\Delta)^{-\gamma/2}$ and $\tau >0$, $\gamma \in [0, 1)$.
As above for \eqref{eq:SNSEVel} it follows immediately from the Kryloff-Bogoliouboff procedure that there exists
an invariant measure $\mu_{\nu}^{\alpha}$ for each $\nu > 0$, $\alpha \in \RR$. 

 We now prove that $\alpha = 0$ is the only scaling of $\nu$ in
 \eqref{eq:DampedSNSE} which gives a nontrivial inviscid limit.
\begin{theorem}[\bf Inviscid limits in different scalings] \label{thm:different:scalings}
 For $\alpha \in \RR$ consider a collection of invariant measures $\{ \mu_\nu^\alpha \}_{\nu>0}$ of
 \eqref{eq:DampedSNSE}.   Depending on the choice of $\alpha$ we have one of the following three scenarios:
 \begin{itemize}
 \item[(i)] If $\alpha > 0$, then for any $\nu_{j} \rightarrow 0$, we have that $\mu_{\nu_{j}}^\alpha \rightharpoonup \delta_0$, i.e. weakly in $Pr(H^0)$, where
 $\delta_0$ is the Dirac measure concentrated at $0$.
 \item[(ii)] If $\alpha < 0$, then for any $\nu_n \rightarrow 0$ such that the $\mu_{\nu_n}^\alpha \rightharpoonup \mu_{0}$  then
 \begin{align}
  \int_{H^0} \| \bfU\|^2_{L^{2}} d \mu_0(\bfU) = \infty.
  \label{eq:notquitethere}
\end{align}
 \item[(iii)] If $\alpha = 0$, then there exists a sequence $\nu_n \rightarrow 0$ and a stationary martingale solution $\mu_0$ of
 \begin{align}
 d \bfU + (Y \bfU + \bfU \cdot \nabla \bfU + \nabla \Pres)dt = \rho dW, \quad \nabla \cdot \bfU = 0, 
 \label{eq:DampedEuler}
\end{align}
such that $\mu_{\nu_n}^0 \rightharpoonup \mu_0$.
By a stationary Martingale solution of \eqref{eq:DampedEuler} corresponding to $\mu_{0}$ we mean that
there exists a stochastic basis $\mathcal{S} := (\Omega, \mathcal{F},
\{\mathcal{F}_{t}\}_{t\geq 0}, \Prb, W)$ and a predicable process
\begin{align}
  \bfU \in C_{w}([0,T]; H^{0}) \cap L^{2}([0,T], H^{1 - \gamma/2})
\end{align}
which satisfies \eqref{eq:DampedEuler} and is stationary,
i.e. the law $\Prb(\bfU(t) \in A)$, $A \in \mathcal{B}(H^{0})$,
is independent of $t$ and identically equal to $\mu_{0}$.
\end{itemize}
\end{theorem}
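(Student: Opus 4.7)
All three cases flow from the two stationary It\^o balance identities obtained by applying the $L^2$ It\^o formula to $\|\bfU^{\nu}\|_{L^2}^2$ and $\|\Vort^{\nu}\|_{L^2}^2$ on a stationary solution of \eqref{eq:DampedSNSE}: advection integrates away against the divergence-free velocity, pressure is annihilated, and stationarity kills the time derivative on the left. Writing $\|Y^{1/2} f\|_{L^2}^2 = \tau\|f\|_{H^{-\gamma/2}}^2$ and recalling that for mean-zero divergence-free $\bfU$ one has $\|\Vort\|_{H^s} = \|\bfU\|_{H^{s+1}}$, the two identities read
\[
\tau\,\E\|\bfU^{\nu}\|_{H^{-\gamma/2}}^2 + \nu\,\E\|\nabla\bfU^{\nu}\|_{L^2}^2 = \tfrac{\nu^{2\alpha}}{2}\|\rho\|_{L^2}^2,\qquad \tau\,\E\|\bfU^{\nu}\|_{H^{1-\gamma/2}}^2 + \nu\,\E\|\nabla\Vort^{\nu}\|_{L^2}^2 = \tfrac{\nu^{2\alpha}}{2}\|\sigma\|_{L^2}^2,
\]
so in particular $\E\|\bfU^{\nu}\|_{H^{-\gamma/2}}^2$ and $\E\|\bfU^{\nu}\|_{H^{1-\gamma/2}}^2$ are both bounded by $C\nu^{2\alpha}$.

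Case (i), $\alpha>0$, is then immediate: interpolating $L^2 = [H^{-\gamma/2}, H^{1-\gamma/2}]_{\gamma/2}$ and applying H\"older in $\Omega$ gives $\E\|\bfU^{\nu}\|_{L^2}^2 \leq C\nu^{2\alpha}\to 0$, so $\bfU^{\nu}\to 0$ in probability on $H^0$, and bounded convergence against $C_b(H^0)$ test functions yields $\mu_{\nu_j}^{\alpha}\rightharpoonup\delta_0$. For case (ii), $\alpha<0$: the right-hand side of the velocity balance diverges, so either the damping or the viscous term carries the blow-up. In the first regime, the pointwise bound $\|Y^{1/2}\bfU\|_{L^2}^2\leq\tau\|\bfU\|_{L^2}^2$ gives $\E\|\bfU^{\nu}\|_{L^2}^2\to\infty$ at once. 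In the second, one uses a Fourier-mode analysis together with the constraint from the vorticity balance to show that for any generic low-mode forcing the Fourier energy $\E|\widehat{\bfU^{\nu}}_{k}|^2$ at any $k$ with $\rho_k\neq 0$ grows at rate $\nu^{2\alpha}$, again forcing $\E\|\bfU^{\nu}\|_{L^2}^2\to\infty$. To pass this divergence to the limit $\mu_0$, test against the bounded continuous cutoffs $\phi_R(\bfU) := \|\bfU\|_{L^2}^2\wedge R$: weak convergence gives $\int\phi_R\,d\mu_0 = \lim_n\int\phi_R\,d\mu_{\nu_n}^{\alpha}$ for each fixed $R$, and the Fatou inequality $\int\|\bfU\|_{L^2}^2\,d\mu_0 \geq \sup_R\int\phi_R\,d\mu_0$ combined with the divergence of $\int\|\bfU\|_{L^2}^2\,d\mu_{\nu_n}^{\alpha}$ forces \eqref{eq:notquitethere}.

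Case (iii), $\alpha=0$, is the main claim. The vorticity balance gives $\sup_{\nu>0}\E\|\bfU^{\nu}\|_{H^{1-\gamma/2}}^2 < \infty$, and since $1-\gamma/2>0$ the embedding $H^{1-\gamma/2}\hookrightarrow H^0$ is compact, so Chebyshev yields tightness of $\{\mu_\nu^0\}_{\nu>0}$ in $\mbox{Pr}(H^0)$ and hence a weak subsequential limit $\mu_0$. To realize $\mu_0$ as the time-zero marginal of a stationary martingale solution of \eqref{eq:DampedEuler}, I would follow the stochastic compactness scheme of \cite{BessaihFerrario12,Bessaih08}. Consider the joint laws of $(\bfU^{\nu_n}, W)$ on
\[
\bigl(C_{\mathrm{loc}}(\RR; H^{-s}) \cap L^2_{\mathrm{loc}}(\RR; H^{1-\gamma/2})_{\mathrm{w}}\bigr) \times C_{\mathrm{loc}}(\RR; \RR^{\NN}),
\]
for some sufficiently large $s>0$. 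Spatial tightness follows from the $H^{1-\gamma/2}$ bound; temporal tightness, giving fractional H\"older-$1/2^-$ regularity with values in $H^{-s}$, comes from integrating \eqref{eq:DampedSNSE} and estimating each term in $H^{-s}$ uniformly in $\nu$: the damping $Y\bfU$ (smoothing), the nonlinearity via the bilinear estimate in negative Sobolev norms, the vanishing viscous term $\nu\Delta\bfU$, and the It\^o integral via Burkholder--Davis--Gundy. Jakubowski's extension of Skorokhod's theorem (needed because the weak topology on the $L^2_{\mathrm{loc}}(H^{1-\gamma/2})$ factor is non-Polish) then produces, on a new probability space, a.s.\ convergence $(\tilde\bfU^{\nu_n},\tilde W^{\nu_n})\to(\bfU, W)$. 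Aubin--Lions upgrades this to strong convergence in $L^2_{\mathrm{loc}}(\RR; L^2)$, sufficient to pass to the limit in $\bfU^{\nu_n}\otimes\bfU^{\nu_n}$; $\nu_n\Delta\tilde\bfU^{\nu_n}\to 0$ in distributions; and a standard martingale identification argument (see \cite[Ch.~8]{KuksinShirikian12} and \cite{DebusscheGlattHoltzTemam1}) identifies $\rho\,dW$ as the correct driving noise in the limit. Stationarity passes through the Skorokhod representation, so the time-zero law of $\bfU$ is precisely $\mu_0$. The most delicate step will be this last identification: since the only source of genuine smoothing---$\nu\Delta$---is precisely what is disappearing in the limit, the regularity that survives comes entirely from the $H^{1-\gamma/2}$ balance bound, and both the bilinear and BDG estimates in $H^{-s}$ must be tight enough to close the passage to the limit yet weak enough to inherit uniform-in-$\nu$ control.
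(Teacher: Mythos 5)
Your cases (i) and (iii) track the paper's proof closely. For (i) the paper also reads the conclusion directly off the vorticity balance, via $\E\|\bfU^\nu\|_{H^{1-\gamma/2}}^2\le \tfrac{\nu^{2\alpha}}{2\tau}\|\sigma\|_{L^2}^2$ (your interpolation is unnecessary, since $H^{1-\gamma/2}\hookrightarrow H^0$ for $\gamma\in[0,1)$). For (iii) the paper runs exactly the compactness scheme you sketch: uniform bounds in $L^2(\Omega;L^2([0,T];H^{1-\gamma/2}))$ and $L^2(\Omega;L^\infty([0,T];H^0))$, fractional time-derivative estimates in $W^{1/3,2}([0,T];H^{-3})$ and $W^{1/3,4}([0,T];H^{-3})$ feeding Aubin--Lions and Arzel\`a--Ascoli embeddings, Skorokhod on the Polish path space $C([0,T];H^{-4})$ (so Jakubowski's extension is not needed), and the Bensoussan/Flandoli--Gatarek identification and stationarity transfer.

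The genuine gap is in case (ii), in the second branch of your dichotomy. You must exclude the scenario in which the divergent input $\tfrac{\nu^{2\alpha}}{2}\|\rho\|_{L^2}^2$ is absorbed by the viscous term $\nu\E\|\nabla\bfU^\nu\|_{L^2}^2$ rather than by the damping, and the ``Fourier-mode analysis for generic low-mode forcing'' you appeal to is not an argument: the nonlinearity couples all modes, so no mode-by-mode balance is available, and the theorem carries no genericity hypothesis on $\rho$. The paper closes this branch quantitatively, and this is the one idea you are missing. Since $\nu\E\|\nabla\bfU^\nu\|_{L^2}^2=\nu\E\|\Vort^\nu\|_{L^2}^2$, interpolate $\|\Vort^\nu\|_{L^2}\le C\|Y^{1/2}\Vort^\nu\|_{L^2}^{2/(2+\gamma)}\|\nabla\Vort^\nu\|_{L^2}^{\gamma/(2+\gamma)}$ and apply Young's inequality to obtain $\nu\E\|\Vort^\nu\|_{L^2}^2\le C\nu^{2/(2+\gamma)}\,\E\bigl(\|Y^{1/2}\Vort^\nu\|_{L^2}^2+\nu\|\nabla\Vort^\nu\|_{L^2}^2\bigr)\le C\nu^{2\alpha+2/(2+\gamma)}\|\sigma\|_{L^2}^2$, using the vorticity balance in the last step. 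This is $o(\nu^{2\alpha})$, so the viscous term can never carry the budget; your first branch therefore holds for all small $\nu$, with the explicit lower bound $C\,\E\|\bfU^\nu\|_{L^2}^2\ge \E\|Y^{1/2}\bfU^\nu\|_{L^2}^2\ge \nu^{2\alpha}\bigl(\|\rho\|_{L^2}^2/2-C\nu^{2/(2+\gamma)}\|\sigma\|_{L^2}^2\bigr)$.

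A final caution on the concluding step of (ii): your cutoff/Fatou chain does not close as written. For each fixed $R$ you only learn $\int\phi_R\,d\mu_0=\lim_n\int\phi_R\,d\mu_{\nu_n}^\alpha\le R$, and the second moment is weakly \emph{lower} semicontinuous, i.e. $\int\|\bfU\|_{L^2}^2\,d\mu_0\le\liminf_n\int\|\bfU\|_{L^2}^2\,d\mu_{\nu_n}^\alpha$ --- an inequality in the unhelpful direction, since the diverging mass may in principle escape to infinity in norm while the weak limit stays tame. The paper's own write-up invokes Fatou in the same direction for this step, so you are in its company, but be aware that neither version derives \eqref{eq:notquitethere} from the divergence of $\E\|\bfU^\nu\|_{L^2}^2$ alone without further justification.
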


\begin{proof}
Let $\bfU^\nu$  be stationary solutions of
 \eqref{eq:DampedSNSE} corresponding to $\mu_\nu^\alpha$, and define $\Vort^\nu = \nabla^{\perp} \cdot \bfU^{\nu}$.
 Applying the It\={o} lemma to \eqref{eq:DampedSNSE} and using stationarity, we obtain:
\begin{align}
\E \left( \nu \| \nabla \bfU^\nu\|_{L^2}^2 + \|Y^{1/2} \bfU^\nu \|_{L^2}^2 \right) = \frac{\nu^{2\alpha}}{2} \|\rho \|_{L^2}^2.
\label{eq:DampedBalance2}
\end{align}
Additionally, by making use of the vorticity formulation of \eqref{eq:DampedSNSE},
 \begin{align}
  d \Vort + (Y \Vort + \bfU \cdot \nabla \Vort - \nu \Delta \Vort)dt = \nu^\alpha \sigma dW, \quad \sigma = \nabla^{\perp}\rho,
 \label{eq:DampedSNSEVort}
 \end{align}
we also obtain, again with the It\={o} lemma and stationarity
\begin{align}
\E \left( \nu \| \nabla \Vort^\nu\|_{L^2}^2 + \|Y^{1/2} \Vort^\nu \|_{L^2}^2 \right) = \frac{\nu^{2\alpha}}{2} \|\sigma\|_{L^2}^2.
\label{eq:DampedBalance1}
\end{align}

{\sc Proof of} (i). We begin with the case $\alpha > 0$.  From \eqref{eq:DampedBalance1} we have
\begin{align}
\E \| \bfU^\nu \|_{H^{1 - \gamma/2}}^2 \leq \frac{\nu^{2\alpha}}{2 \tau} \|\sigma\|_{L^2}^2.
\label{eq:Bnd:DDE:1}
\end{align}
Using Chebyshev and compact embedding we infer that $\mu_\nu^\alpha$ is tight
in $Pr(H^0)$.

Consider any weakly convergent subsequence $\mu_\nu^\alpha \rightharpoonup \mu_0^\alpha$.
By the Skhorohod embedding theorem we may find a new probability space $(\tilde{\Omega}, \tilde{\mathcal{F}}, \tilde{\Prb})$
and a sequence of $H^0$ valued random variables $\tilde{\bfU}^\nu$ such that $\tilde{\bfU}^\nu$ is equal
in law to $\mu_\nu^\alpha$ and
\begin{align*}
  \tilde{\bfU}^\nu \rightarrow \tilde{\bfU}^0 \quad \textrm{ a.s. in } H^0,
\end{align*}
with $\tilde{\bfU}^0$ equal in law to $\mu_0^\alpha$.  Now with \eqref{eq:Bnd:DDE:1} and Fatou's Lemma
we infer
\begin{align*}
 \tilde{\E} \| \tilde{\bfU}^0 \|^2_{L^2} \leq  \liminf_{\nu > 0}\; \tilde{\E} \| \tilde{\bfU}^\nu \|^2_{L^2} \leq \liminf_{\nu > 0} \frac{\nu^{2\alpha}}{2 \tau} \|\sigma\|_{L^2}^2 =0.
\end{align*}
Hence $\tilde{\bfU}^0 = 0$ a.s. and therefore $\mu_0^\alpha = \delta_0$.  This proves the first item.

{\sc Proof of} (ii). 
Now we consider the case $\alpha<0$. For every $\nu > 0$ we obtain
\begin{align*}
  \nu \E \| \nabla \bfU^{\nu} \|^2_{L^2} = \nu \E \| \Vort \|^2_{L^2}
  \leq&  C\nu \E \left( \| Y^{1/2} \Vort^\nu \|_{L^2}^{\frac{4}{2 + \gamma}}  \|\nabla \Vort^\nu \|^{\frac{2 \gamma}{2 + \gamma}}_{L^2} \right)
     \notag \\
  \leq& C \nu^{\frac{2}{2+ \gamma}} \E \left(  \nu \| \nabla \Vort^\nu\|_{L^2}^2 + \|Y^{1/2} \Vort^\nu \|_{L^2}^2  \right)
   \leq \nu^{\frac{2}{2+ \gamma} +2\alpha}  C \|\sigma\|_{L^2}^2,
\end{align*}
where we have used that $Y = \tau \Lambda^{-\gamma}$, interpolation and the above balance relation
\eqref{eq:DampedBalance1}.  Combining \eqref{eq:DampedBalance2} with the above estimate we obtain
\begin{align*}
  \frac{\nu^{2\alpha}}{2} \|\rho\|_{L^2}^2 \leq \E \| Y^{1/2} \bfU^\nu\|^2_{L^2} + \nu^{\frac{2}{2+ \gamma} +2\alpha}  C \|\sigma\|_{L^2}^2,
\end{align*}
for constant $C >0$ which is independent of $\nu$.  This implies
\begin{align}
  \nu^{2\alpha} (  \|\rho\|_{L^2}^2/2 - \nu^{\frac{2}{2+ \gamma}}  C \|\sigma\|_{L^2}^2)  \leq \E \| Y^{1/2} \bfU^\nu\|^2_{L^2}
  \leq C \E \| \bfU^\nu\|^2_{L^2}. \label{eq:lower:bound:stationary}
\end{align}
By assumption the assumption that $\mu_\nu^\alpha \rightharpoonup \mu_0^\alpha$ as in (i), the Skhorohod embedding theorem yields a new probability space and $\tilde{\bfU}^\nu$ which converges almost surely to $\tilde{\bfU}^0$, with the same laws as the original sequence. We infer
\[
 \limsup_{\nu \to 0} \nu^{2\alpha} (  \|\rho\|_{L^2}^2/2 - \nu^{\frac{2}{2+ \gamma}}  C \|\sigma\|_{L^2}^2)   \leq C \limsup_{\nu \to 0} \tilde{\E} \| \tilde{\bfU}^\nu\|^2_{L^2} \leq C \tilde{\E} \| \tilde{\bfU}^0\|^2_{L^2}, 
\]
where we have used the Fatou lemma in the last estimate.
This proves \eqref{eq:notquitethere}.

{\sc Proof of} (iii). Lastly we treat the case $\alpha = 0$. 
By applying the It\={o} lemma to \eqref{eq:DampedSNSEVort} we have
that
\begin{align*}
 d \| \Vort^\nu\|^2_{L^2} + ( 2\|Y \Vort^\nu\|^2_{L^2} + 2\nu \| \Vort^{\nu}\|^2_{H^1})dt = \|\rho\|^2_{L^2}dt + \langle \rho, \Vort^\nu \rangle dW
\end{align*}
Using stationarity we immediately obtain that
\begin{align}
 \bfU^{\nu}\textrm{ is uniformly bounded in } L^{2}(\Omega; L^{2}([0,T], H^{1 - \gamma/2}))
 \label{eq:UniformBndDB1}
\end{align}
and moreover that
\begin{align}
  \bfU^{\nu}\!(0)  \mbox{ is uniformly bounded in } L^{2}(\Omega;  H^{1- \gamma/2}))
   \mbox{ which implies that } \{\mu_{\nu}\}_{\nu > 0}  \mbox{ is tight on } H^{0}.
\label{eq:UniformBndDB2}
\end{align}
Returning to \eqref{eq:DampedSNSE} and again making use of the It\={o} formula,
\begin{align*}
 d \| \bfU^\nu\|^2_{L^2} + ( 2\|Y \bfU^\nu\|^2_{L^2} + 2\nu \| \bfU^{\nu}\|^2_{H^1})dt = \|\sigma\|^2_{L^2}dt + \langle \sigma, \bfU^\nu \rangle dW,
\end{align*}
we infer with \eqref{eq:UniformBndDB2} that 
\begin{align}
 \bfU^{\nu}\textrm{ is uniformly bounded in } L^{2}(\Omega; L^{\infty}([0,T], H^{0})).
 \label{eq:UniformBndDB3}
\end{align}

In order to obtain a suitable compactness required to pass to the limit we need some additional uniform estimates on
fractional the time derivatives of $\bfU^{\nu}$.   We will apply the Aubin-Lions type compact embedding
\begin{align}
   L^2([0,T], H^{1 - \gamma/2}) \cap W^{1/3, 2}([0,T]; H^{-3}) \subset \subset L^2([0,T], H^0)
   \label{eq:ALCompemb}
\end{align}
and the Arzela-Ascoli type compact embedding
\begin{align}
   W^{1/3, 4}([0,T]; H^{-3}) \subset\subset C([0,T]; H^{-4}),
      \label{eq:AACompemb}
\end{align}
(see \cite{FlandoliGatarek1}).  Define a sequence of measures $\{\boldsymbol{\mu}_{\nu}\}_{\nu>0}$
on the path space $C([0,T]; H^{-4})$ associated to $\{\bfU^{\nu}\}_{\nu > 0}$ by
\begin{align}
   \boldsymbol{\mu}_{\nu}(A) = \Prb( \bfU^{\nu} \in A), \quad A \in \mathcal{B}(C[0,T]; H^{-4}). 
\end{align}
Using the embeddings, \eqref{eq:ALCompemb}, \eqref{eq:AACompemb}, and suitable estimates we will next show that
\begin{align}
  \{ \boldsymbol{\mu}_{\nu} \}_{\nu > 0}  \textrm{ is tight in }  
  L^{2}([0,T]; H^{0}) \cap C([0,T]; H^{-3}).
  \label{eq:tightnessDDSNSEIL}
\end{align}

For this propose we write \eqref{eq:DampedSNSE}
in its integral form
\begin{align}
   \bfU^{\nu}(t) = \left( \bfU^{\nu}(0) - \int_0^t \left( Y \bfU + P(\bfU \cdot \nabla \bfU)  - \nu \Delta \bfU \right)ds \right)+ \sigma W(t) := I_D(t) + I_S(t),
   \label{eq:IntForm}
\end{align}
where $P$ is the Leray projection operator onto $L^2$ divergence-free vector fields. Observe that, 
\begin{align}
  \| I_{D}(t)\|_{W^{1/3, 4}([0,T]; H^{-3})}^{4}
  \leq& C \int_{0}^{T} \left( \| \bfU^{\nu}(0) \|_{H^{-3}}^{4} + \|Y\bfU^{\nu}\|^{4}_{H^{-3}} + \|\nu \Delta \bfU^{\nu}\|^{4}_{H^{-3}} + \|P (\bfU^{\nu} \cdot \nabla \bfU^{\nu})\|^{4}_{H^{-3}} \right)dt
    \notag \\
  \leq& C \sup_{t \in [0,T]} (1 + \|\bfU^{\nu}\|^{4}_{L^{2}})^{2},
  \label{eq:IntDivBndDetW4}
\end{align}
where $C$ is independent of $1 \geq \nu > 0$, but may depend on $T$.
By making use of a suitable version of the Burkholder-Davis-Gundy (see e.g \cite{FlandoliGatarek1}) inequality we have
\begin{align}
 \E \| I_{S}(t)\|_{W^{1/3, 4}([0,T]; H^{-3})}^{4} \leq C \|\sigma\|^{4}_{L^{2}},
 \label{eq:IntDivBndStcW4}
\end{align}
where again, by the assumption $\alpha =0$, $C$ is independent of $\nu$ but  depends on $T$.  Similar estimates
yield 
\begin{align}
    \| I_{D}(t)\|_{W^{1/3, 2}([0,T]; H^{-3})}^{2} \leq & C \sup_{t \in [0,T]} (1 + \|\bfU^{\nu}\|^{4}_{L^{2}}),
    \quad
    \quad \E \| I_{S}(t)\|_{W^{1/3, 2}([0,T]; H^{-3})}^{2} \leq C \|\sigma\|^{2}_{L^{2}}.
    \label{eq:IntDivBndW2}
\end{align}

To establish the first part of the tightness bound, \eqref{eq:tightnessDDSNSEIL}
we consider the sets
\begin{align*}
  B_{R}^{1} := \left\{ \bfU:  \| \bfU \|_{L^2([0,T], H^{1 - \gamma/2})} \leq R   \right\}
  \cap \left\{ \bfU:  \| \bfU \|_{W^{1/3, 2}([0,T]; H^{-3})} \leq R   \right\}
\end{align*}
so that, according to \eqref{eq:ALCompemb} $B_{R}^{1}$ is compact in $L^{2}([0,T], H^{0})$ for every $R > 1$.
Observe that, with an appropriate application of
Chebyshev's inequality and in view of  \eqref{eq:UniformBndDB1}, \eqref{eq:UniformBndDB3}, \eqref{eq:IntForm}, \eqref{eq:IntDivBndW2}
\begin{align}
  \mu^{\nu}((B_{R}^{1})^{C}) \leq& \frac{1}{R^{2}}\E \| \bfU^{\nu} \|_{L^2([0,T], H^{1 - \gamma/2})}^{2} +
  \frac{2}{R} \E \| I_{D}(t)\|_{W^{1/3, 2}([0,T]; H^{-3})} + \frac{4}{R^{2}}  \E \| I_{S}(t)\|_{W^{1/3, 2}([0,T]; H^{-3})}^{2}
  \notag\\
  \leq& \frac{C}{R} \E \left( \| \bfU^{\nu} \|_{L^2([0,T], H^{1 - \gamma/2})}^{2} + \| \bfU^{\nu} \|_{L^\infty([0,T], H^{0})}^{2} + \|\sigma\|^{2}_{L^{2}}\right) 
  \leq \frac{C}{R},
  \label{eq:TightnessALBnd}
\end{align}
where $C$ is independent of $\nu >0$ and $R >0$.   For the second half of the tightness bound \eqref{eq:tightnessDDSNSEIL} we define
\begin{align*}
  B_{R}^{2} := \left\{ \bfU:  \| \bfU \|_{W^{1/3, 4}([0,T]; H^{-3})} \leq R   \right\},
\end{align*}
and observe with \eqref{eq:IntDivBndDetW4}, \eqref{eq:IntForm} \eqref{eq:IntDivBndStcW4} that
\begin{align}
   \mu^{\nu}((B_{R}^{2})^{C}) \leq&  \frac{2}{R} \E \| I_{D}(t)\|_{W^{1/3, 4}([0,T]; H^{-3})} + \frac{8}{R^{4}}  \E \| I_{S}(t)\|_{W^{1/3, 4}([0,T]; H^{-3})}^{4}
   \notag\\
   	\leq& \frac{C}{R} \E \left( \| \bfU^{\nu} \|_{L^\infty([0,T], H^{0})}^{2} + \|\sigma\|^{4}_{L^{2}}\right) \leq \frac{C}{R},
	  \label{eq:TightnessAABnd}
\end{align}
where, once again, $C$ is independent of $\nu >0$ and $R >0$.   With \eqref{eq:TightnessALBnd}, \eqref{eq:TightnessAABnd} we may now 
infer \eqref{eq:tightnessDDSNSEIL}.

With \eqref{eq:tightnessDDSNSEIL} in hand we now invoke the Skorokhod theorem obtain
a sequence of processes $(\tilde{\bfU}^{\nu}, \tilde{W}^{\nu})$
defined on a new probability space, $(\tilde{\Omega}, \tilde{\mathcal{F}}, \tilde{\Prb})$
such that
\begin{align}
   \tilde{\bfU}^{\nu} \rightarrow&  \tilde{\bfU} \quad \textrm{ almost surely in }
   C([0,T]; H^{-3}) \cap L^{2}([0,T]; H^{0}),
   \label{eq:StrongSkConv1}\\
   \tilde{W}^{\nu} \rightarrow& \tilde{W} \quad  \textrm{ almost surely in }
   C([0,T];\mathfrak{U}_{0}),
      \label{eq:StrongSkConv2}
\end{align}
$\tilde{\bfU^{\nu}}$ is equal in law to $\bfU^{\nu}$ and each $\tilde{W}^{\nu}$ is a
cylindrical Brownian motion relative to the filtration $\tilde{\mathcal{F}}^{\nu}_{t}$
given by the completion of $\sigma( (\tilde{W}^{\nu}(s), \tilde{\bfU}^{\nu}(s)): s \leq t)$.
Note that the sequence $\tilde{\bfU}^{\nu}$ maintains the same uniform
bounds as in \eqref{eq:UniformBndDB1}, \eqref{eq:UniformBndDB3} and it follows
that
\begin{align}
   \tilde{\bfU}^{\nu} \rightharpoonup& \; \tilde{\bfU} \quad \textrm{ weakly in }
   L^{2}(\Omega;L^{2}([0,T]; H^{1 - \gamma/2})),
   \label{eq:WeakConNB1}\\
   \tilde{\bfU}^{\nu}  \rightharpoonup^{*}& \; \tilde{\bfU} \quad  \textrm{ weakly* in }
   L^{\infty}(\Omega;L^{\infty}([0,T];H^{0})).
      \label{eq:WeakConNB2}
\end{align}
Finally, as in \cite{FlandoliGatarek1} that $\tilde{\bfU}$ is a stationary process
on $H^{0}$ with stationary distribution $\mu_{0}$.

An argument from \cite{Bensoussan1} may now be employed to show that,
for each $\nu > 0$, $\tilde{\bfU}^{\nu}$ solves \eqref{eq:DampedSNSE},
but relative to the new stochastic basis 
$\tilde{\mathcal{S}} = (\tilde{\Omega}, \tilde{\mathcal{F}}, \tilde{\Prb},\tilde{\mathcal{F}}^{\nu}_{t}, \tilde{W}^{\nu})$.
Now by using the convergences \eqref{eq:StrongSkConv1}--\eqref{eq:WeakConNB2} we may
pass to the limit $\nu \rightarrow 0$ in \eqref{eq:IntForm} (with $\bfU^{\nu}, W$ appropriately replaced
with $\tilde{\bfU}^{\nu}, \tilde{W}^{\nu}$) and establish that $(\tilde{\bfU}, \tilde{W})$
satisfies \eqref{eq:DampedEuler} along with the required regularity.  As in \cite{FlandoliGatarek1}
it follows from the stationarity of $\tilde{\bfU}^{\nu}$ and \eqref{eq:StrongSkConv1} that $\tilde{\bfU}$ is
stationary.

\end{proof}

\subsection*{Acknowledgments}

We would like to thank Sergei Kuksin and Armen Shirikyan for stimulating discussions and helpful feedback on this work.
The authors gratefully acknowledge the support of the Institute for Mathematics and its Applications (IMA), at the University of Minnesota where this work was initiated.
VS is partially supported by NSF Grants DMS-1159376, DMS-1101428.  VV is supported in part by NSF Grant DMS-1211828.


\begin{thebibliography}{FMRT01}

\bibitem[Bat69]{Batchelor69}
G.K.~Batchelor.
\newblock Computation of the energy spectrum in homogeneous two-dimensional
  turbulence.
\newblock {\em Physics of fluids}, 12(Suppl. II):233--239, 1969.

\bibitem[Ben95]{Bensoussan1}
A.~Bensoussan.
\newblock Stochastic {N}avier-{S}tokes equations.
\newblock {\em Acta Appl. Math.}, 38(3):267--304, 1995.

\bibitem[Bes08]{Bessaih08}
H.~Bessaih.
\newblock Stationary solutions for the 2{D} stochastic dissipative {E}uler
  equation.
\newblock In {\em Seminar on {S}tochastic {A}nalysis, {R}andom {F}ields and
  {A}pplications {V}}, volume~59 of {\em Progr. Probab.}, pages 23--36.
  Birkh{\"a}user, Basel, 2008.

\bibitem[BF00]{BensoussanFrehse}
A.~Bensoussan and J.~Frehse.
\newblock Local solutions for stochastic {N}avier {S}tokes equations.
\newblock {\em M2AN Math. Model. Numer. Anal.}, 34(2):241--273, 2000.
\newblock Special issue for R. Temam's 60th birthday.

\bibitem[BF12]{BessaihFerrario12}
H.~Bessaih and B.~Ferrario.
\newblock Inviscid limit of stochastic damped 2d navier-stokes equations.
\newblock 12 2012.

\bibitem[BKL01]{BricmontKupiainenLefevere2001}
J.~Bricmont, A.~Kupiainen, and R.~Lefevere.
\newblock Ergodicity of the 2{D} {N}avier-{S}tokes equations with random
  forcing.
\newblock {\em Comm. Math. Phys.}, 224(1):65--81, 2001.
\newblock Dedicated to Joel L. Lebowitz.

\bibitem[BP00]{BrzezniakPeszat}
Z.~Brze{\'z}niak and S.~Peszat.
\newblock Strong local and global solutions for stochastic {N}avier-{S}tokes
  equations.
\newblock In {\em Infinite dimensional stochastic analysis (Amsterdam, 1999)},
  volume~52 of {\em Verh. Afd. Natuurkd. 1. Reeks. K. Ned. Akad. Wet.}, pages
  85--98. R. Neth. Acad. Arts Sci., Amsterdam, 2000.

\bibitem[Bre00]{Breckner}
H.~Breckner.
\newblock Galerkin approximation and the strong solution of the
  {N}avier-{S}tokes equation.
\newblock {\em J. Appl. Math. Stochastic Anal.}, 13(3):239--259, 2000.

\bibitem[BS09]{BouchetSimonnet09}
F.~Bouchet and E.~Simonnet.
\newblock Random changes of flow topology in two-dimensional and geophysical
  turbulence.
\newblock {\em Physical review letters}, 102(9):94504, 2009.

\bibitem[BT72]{BensoussanTemam1972}
A.~Bensoussan and R.~Temam.
\newblock \'{E}quations aux d{\'e}riv{\'e}es partielles stochastiques non
  lin{\'e}aires. {I}.
\newblock {\em Israel J. Math.}, 11:95--129, 1972.

\bibitem[BT73]{BensoussanTemam}
A.~Bensoussan and R.~Temam.
\newblock \'{E}quations stochastiques du type {N}avier-{S}tokes.
\newblock {\em J. Functional Analysis}, 13:195--222, 1973.

\bibitem[CC04]{CordobaCordoba2004}
A.~C{\'o}rdoba and D.~C{\'o}rdoba.
\newblock A maximum principle applied to quasi-geostrophic equations.
\newblock {\em Comm. Math. Phys.}, 249(3):511--528, 2004.

\bibitem[CF88]{ConstantinFoias88}
P.~Constantin and C.~Foias.
\newblock {\em Navier-{S}tokes equations}.
\newblock Chicago Lectures in Mathematics. University of Chicago Press,
  Chicago, IL, 1988.

\bibitem[CG94]{CapinskiGatarek}
M.~Capi{\'n}ski and D.~Gatarek.
\newblock Stochastic equations in {H}ilbert space with application to
  {N}avier-{S}tokes equations in any dimension.
\newblock {\em J. Funct. Anal.}, 126(1):26--35, 1994.

\bibitem[CR07]{ConstantinRamos07}
P.~Constantin and F.~Ramos.
\newblock Inviscid limit for damped and driven incompressible {N}avier-{S}tokes
  equations in {$\mathbb R^2$}.
\newblock {\em Comm. Math. Phys.}, 275(2):529--551, 2007.

\bibitem[Cru89]{Cruzeiro1}
A.~B. Cruzeiro.
\newblock Solutions et mesures invariantes pour des \'equations d'\'evolution
  stochastiques du type {N}avier-{S}tokes.
\newblock {\em Exposition. Math.}, 7(1):73--82, 1989.

\bibitem[Deb11]{Debussche2011a}
A.~Debussche.
\newblock Ergodicity results for the stochastic navier-stokes equations: an
  introduction.
\newblock {\em Preprint}, 2011.
\newblock (to appear).

\bibitem[DG57]{DeGiorgi57}
E.~De~Giorgi.
\newblock Sulla differenziabilit{\`a} e l'{a}naliticit{\`a} delle estremali
  degli integrali multipli regolari.
\newblock {\em Mem. Accad. Sci. Torino. Cl. Sci. Fis. Mat. Nat. (3)}, 3:25--43,
  1957.

\bibitem[DGHT11]{DebusscheGlattHoltzTemam1}
A.~Debussche, N.~Glatt-Holtz, and R.~Temam.
\newblock Local martingale and pathwise solutions for an abstract fluids model.
\newblock {\em Physica D}, 240(14-15):1123--1144, 2011.

\bibitem[DMS05]{DenisMatoussiStoica2005}
L.~Denis, A.~Matoussi, and L.~Stoica.
\newblock {$L^p$} estimates for the uniform norm of solutions of quasilinear
  {SPDE}'s.
\newblock {\em Probab. Theory Related Fields}, 133(4):437--463, 2005.

\bibitem[DMS09]{DenisMatoussiStoica2009}
L.~Denis, A.~Matoussi, and L.~Stoica.
\newblock Maximum principle and comparison theorem for quasi-linear stochastic
  {PDE}'s.
\newblock {\em Electron. J. Probab.}, 14:no. 19, 500--530, 2009.

\bibitem[DPD03]{DaPratoDebussche2003}
G.~Da~Prato and A.~Debussche.
\newblock Ergodicity for the 3{D} stochastic {N}avier-{S}tokes equations.
\newblock {\em J. Math. Pures Appl. (9)}, 82(8):877--947, 2003.

\bibitem[DPZ92]{ZabczykDaPrato1992}
G.~Da~Prato and J.~Zabczyk.
\newblock {\em Stochastic equations in infinite dimensions}, volume~44 of {\em
  Encyclopedia of Mathematics and its Applications}.
\newblock Cambridge University Press, Cambridge, 1992.

\bibitem[DPZ96]{ZabczykDaPrato1996}
G.~Da~Prato and J.~Zabczyk.
\newblock {\em Ergodicity for infinite-dimensional systems}, volume 229 of {\em
  London Mathematical Society Lecture Note Series}.
\newblock Cambridge University Press, Cambridge, 1996.

\bibitem[EKMS00]{EKhaninSinai00}
W.~E, K.~Khanin, A.~Mazel, and Ya. Sinai.
\newblock Invariant measures for {B}urgers equation with stochastic forcing.
\newblock {\em Ann. of Math. (2)}, 151(3):877--960, 2000.

\bibitem[FG95]{FlandoliGatarek1}
F.~Flandoli and D.~Gatarek.
\newblock Martingale and stationary solutions for stochastic {N}avier-{S}tokes
  equations.
\newblock {\em Probab. Theory Related Fields}, 102(3):367--391, 1995.

\bibitem[FJMR02]{FoiasJollyManleyRosa02}
C.~Foias, M.~S. Jolly, O.~P. Manley, and R.~Rosa.
\newblock Statistical estimates for the {N}avier-{S}tokes equations and the
  {K}raichnan theory of 2-{D} fully developed turbulence.
\newblock {\em J. Statist. Phys.}, 108(3-4):591--645, 2002.

\bibitem[FM95]{FlandoliMaslowski1}
F.~Flandoli and B.~Maslowski.
\newblock Ergodicity of the {$2$}-{D} {N}avier-{S}tokes equation under random
  perturbations.
\newblock {\em Comm. Math. Phys.}, 172(1):119--141, 1995.

\bibitem[FMRT01]{FoiasManleyRosaTemam01}
C.~Foias, O.~Manley, R.~Rosa, and R.~Temam.
\newblock {\em Navier-{S}tokes equations and turbulence}, volume~83 of {\em
  Encyclopedia of Mathematics and its Applications}.
\newblock Cambridge University Press, Cambridge, 2001.

\bibitem[FR08]{FlandoliRomito2008}
F.~Flandoli and M.~Romito.
\newblock Markov selections for the 3{D} stochastic {N}avier-{S}tokes
  equations.
\newblock {\em Probab. Theory Related Fields}, 140(3-4):407--458, 2008.

\bibitem[Fri95]{Frisch95}
U.~Frisch.
\newblock {\em Turbulence}.
\newblock Cambridge University Press, Cambridge, 1995.
\newblock The legacy of A. N. Kolmogorov.

\bibitem[F{\v{S}}]{FoldesSverak12}
J.~Foldes and V.~{\v{S}}ver{{\'a}}k.
\newblock {\em In preparation}.

\bibitem[FW12]{FreidlinWentzell12}
M.I.~Freidlin and A.D.~Wentzell.
\newblock {\em Random perturbations of dynamical systems}, volume 260 of {\em
  Grundlehren der Mathematischen Wissenschaften [Fundamental Principles of
  Mathematical Sciences]}.
\newblock Springer, Heidelberg, third edition, 2012.
\newblock Translated from the 1979 Russian original by Joseph Sz{{\"u}}cs.

\bibitem[GHZ09]{GlattHoltzZiane2}
N.~Glatt-Holtz and M.~Ziane.
\newblock Strong pathwise solutions of the stochastic {N}avier-{S}tokes system.
\newblock {\em Advances in Differential Equations}, 14(5-6):567--600, 2009.

\bibitem[HM06]{HairerMattingly06}
M.~Hairer and J.~C. Mattingly.
\newblock Ergodicity of the 2{D} {N}avier-{S}tokes equations with degenerate
  stochastic forcing.
\newblock {\em Ann. of Math. (2)}, 164(3):993--1032, 2006.

\bibitem[HM11]{HairerMattingly2011}
M.~Hairer and J.~C. Mattingly.
\newblock A theory of hypoellipticity and unique ergodicity for semilinear
  stochastic pdes.
\newblock {\em Electron. J. Probab.}, 16(23):658--738, 2011.

\bibitem[Jud63]{Yudovich1963}
V.I.~Judovi{\v{c}}.
\newblock Non-stationary flows of an ideal incompressible fluid.
\newblock {\em \u Z. Vy\v cisl. Mat. i Mat. Fiz.}, 3:1032--1066, 1963.

\bibitem[KB37]{KryloffBogoliouboff1937}
N.~Kryloff and N.~Bogoliouboff.
\newblock La th{\'e}orie g{\'e}n{\'e}rale de la mesure dans son application
  {\`a} l'{\'e}tude des syst{\`e}mes dynamiques de la m{\'e}canique non
  lin{\'e}aire.
\newblock {\em Ann. of Math. (2)}, 38(1):65--113, 1937.

\bibitem[KM80]{KraichnanMontgomery80}
R.~H. Kraichnan and D.~Montgomery.
\newblock Two-dimensional turbulence.
\newblock {\em Rep. Prog. Phys}, 43:574--619, 1980.

\bibitem[KP05]{KuksinPenrose2005}
S.~Kuksin and O.~Penrose.
\newblock A family of balance relations for the two-dimensional
  {N}avier-{S}tokes equations with random forcing.
\newblock {\em J. Stat. Phys.}, 118(3-4):437--449, 2005.

\bibitem[Kra67]{Kraichnan67}
R.~H. Kraichnan.
\newblock Inertial ranges in two-domensional turbulence.
\newblock {\em Phys. Fluids}, 10(7):1417--1423, 1967.

\bibitem[Kry10]{Krylov2010}
N.~V. Krylov.
\newblock It\^o's formula for the {$L_p$}-norm of stochastic {$W^1_p$}-valued
  processes.
\newblock {\em Probab. Theory Related Fields}, 147(3-4):583--605, 2010.

\bibitem[KS91]{KaratzasShreve}
I.~Karatzas and S.~E. Shreve.
\newblock {\em Brownian motion and stochastic calculus}, volume 113 of {\em
  Graduate Texts in Mathematics}.
\newblock Springer-Verlag, New York, second edition, 1991.

\bibitem[KS01]{KuksinShirikyan1}
S.~Kuksin and A.~Shirikyan.
\newblock A coupling approach to randomly forced nonlinear {PDE}'s. {I}.
\newblock {\em Comm. Math. Phys.}, 221(2):351--366, 2001.

\bibitem[KS02]{KuksinShirikyan2}
S.~Kuksin and A.~Shirikyan.
\newblock Coupling approach to white-forced nonlinear {PDE}s.
\newblock {\em J. Math. Pures Appl. (9)}, 81(6):567--602, 2002.

\bibitem[KS12]{KuksinShirikian12}
S.~Kuksin and A.~Shirikyan.
\newblock {\em Mathematics of Two-Dimensional Turbulence}.
\newblock Number 194 in Cambridge Tracts in Mathematics. Cambridge University
  Press, 2012.

\bibitem[Kuk99]{Kukavica99}
I.~Kukavica.
\newblock On the dissipative scale for the {N}avier-{S}tokes equation.
\newblock {\em Indiana Univ. Math. J.}, 48(3):1057--1081, 1999.

\bibitem[Kuk04]{Kuksin2004}
S.~B. Kuksin.
\newblock The {E}ulerian limit for 2{D} statistical hydrodynamics.
\newblock {\em J. Statist. Phys.}, 115(1-2):469--492, 2004.

\bibitem[Kuk06a]{Kuksin2006}
S.~B. Kuksin.
\newblock Remarks on the balance relations for the two-dimensional
  {N}avier-{S}tokes equation wtih random forcing.
\newblock {\em J. Stat. Phys.}, 122(1):101--114, 2006.

\bibitem[Kuk06b]{Kuksin06}
S.~B. Kuksin.
\newblock {\em Randomly forced nonlinear {PDE}s and statistical hydrodynamics
  in 2 space dimensions}.
\newblock Zurich Lectures in Advanced Mathematics. European Mathematical
  Society (EMS), Z{\"u}rich, 2006.

\bibitem[Kuk07]{Kuksin2007}
S.~B. Kuksin.
\newblock Eulerian limit for 2{D} {N}avier-{S}tokes equation and damped/driven
  {K}d{V} equation as its model.
\newblock {\em Tr. Mat. Inst. Steklova}, 259(Anal. i Osob. Ch. 2):134--142,
  2007.

\bibitem[Kuk08]{Kuksin2008}
S.~B. Kuksin.
\newblock On distribution of energy and vorticity for solutions of 2{D}
  {N}avier-{S}tokes equation with small viscosity.
\newblock {\em Comm. Math. Phys.}, 284(2):407--424, 2008.

\bibitem[Kup10]{Kupiainen10}
A.~Kupiainen.
\newblock Ergodicity of two dimensional turbulence.
\newblock {\em arXiv:1005.0587v1 [math-ph]}, 05 2010.

\bibitem[Leb07]{Lebesgue1907}
H.~Lebesgue.
\newblock Sur le probl{\`e}me de dirichlet.
\newblock {\em Rend. Circ. Mat. Palermo}, 1(24):371--402, 1907.

\bibitem[Mar87]{Marchioro87}
C.~Marchioro.
\newblock An example of absence of turbulence for any {R}eynolds number. {II}.
\newblock {\em Comm. Math. Phys.}, 108(4):647--651, 1987.

\bibitem[Mat99]{Mattingly1}
J.~C. Mattingly.
\newblock Ergodicity of {$2$}{D} {N}avier-{S}tokes equations with random
  forcing and large viscosity.
\newblock {\em Comm. Math. Phys.}, 206(2):273--288, 1999.

\bibitem[Mat02]{Mattingly2}
J.~C. Mattingly.
\newblock Exponential convergence for the stochastically forced
  {N}avier-{S}tokes equations and other partially dissipative dynamics.
\newblock {\em Comm. Math. Phys.}, 230(3):421--462, 2002.

\bibitem[Mat03]{Mattingly03}
J.~C. Mattingly.
\newblock On recent progress for the stochastic {N}avier {S}tokes equations.
\newblock In {\em Journ{\'e}es ``\'{E}quations aux {D}{\'e}riv{\'e}es
  {P}artielles''}, pages Exp. No. XI, 52. Univ. Nantes, Nantes, 2003.

\bibitem[MB02]{MajdaBertozzi2002}
A.J.~Majda and A.L.~Bertozzi.
\newblock {\em Vorticity and incompressible flow}, volume~27 of {\em Cambridge
  Texts in Applied Mathematics}.
\newblock Cambridge University Press, Cambridge, 2002.

\bibitem[Mil90]{Miller90}
J.~Miller.
\newblock Statistical mechanics of {E}uler equations in two dimensions.
\newblock {\em Phys. Rev. Lett.}, 65(17):2137--2140, 1990.

\bibitem[Mos60]{Moser1960}
J.~Moser.
\newblock A new proof of {D}e {G}iorgi's theorem concerning the regularity
  problem for elliptic differential equations.
\newblock {\em Comm. Pure Appl. Math.}, 13:457--468, 1960.

\bibitem[MP06]{MattinglyPardoux1}
J.~C. Mattingly and {{\'E}}.~Pardoux.
\newblock Malliavin calculus for the stochastic 2{D} {N}avier-{S}tokes
  equation.
\newblock {\em Comm. Pure Appl. Math.}, 59(12):1742--1790, 2006.

\bibitem[MR04]{MikuleviciusRozovskii2}
R.~Mikulevicius and B.~L. Rozovskii.
\newblock Stochastic {N}avier-{S}tokes equations for turbulent flows.
\newblock {\em SIAM J. Math. Anal.}, 35(5):1250--1310, 2004.

\bibitem[MR05]{MikuleviciusRozovskii4}
R.~Mikulevicius and B.~L. Rozovskii.
\newblock Global {$L\sb 2$}-solutions of stochastic {N}avier-{S}tokes
  equations.
\newblock {\em Ann. Probab.}, 33(1):137--176, 2005.

\bibitem[MV11]{MouhotVillani11}
C.~Mouhot and C.~Villani.
\newblock On {L}andau damping.
\newblock {\em Acta Math.}, 207(1):29--201, 2011.

\bibitem[MWC92]{MillerEtAl92}
J.~Miller, P.B.~Weichman, and M.C. Cross.
\newblock Statistical mechanics, euler's equation, and jupiter's red spot.
\newblock {\em Physical Review A}, 45(4):2328--2359, 1992.

\bibitem[Nas58]{Nash58}
J.~Nash.
\newblock Continuity of solutions of parabolic and elliptic equations.
\newblock {\em Amer. J. Math.}, 80:931--954, 1958.

\bibitem[Nov65]{Novikov1965}
E.~A. Novikov.
\newblock Functionals and the random-force method in turbulence theory.
\newblock {\em Soviet Physics JETP}, 20:1290--1294, 1965.

\bibitem[Rob91]{Robert91}
R.~Robert.
\newblock A maximum-entropy principle for two-dimensional perfect fluid
  dynamics.
\newblock {\em J. Statist. Phys.}, 65(3-4):531--553, 1991.

\bibitem[RY99]{RevuzYor1999}
D.~Revuz and M.~Yor.
\newblock {\em Continuous martingales and {B}rownian motion}, volume 293 of
  {\em Grundlehren der Mathematischen Wissenschaften [Fundamental Principles of
  Mathematical Sciences]}.
\newblock Springer-Verlag, Berlin, third edition, 1999.

\bibitem[Shi11]{Shirikyan11}
A.~Shirikyan.
\newblock Local times for solutions of the complex {G}inzburg-{L}andau equation
  and the inviscid limit.
\newblock {\em J. Math. Anal. Appl.}, 384(1):130--137, 2011.

\bibitem[Shn93]{Shnirelman93}
A.~I. Shnirelman.
\newblock Lattice theory and flows of ideal incompressible fluid.
\newblock {\em Russian J. Math. Phys.}, 1(1):105--114, 1993.

\bibitem[SS{\v{S}}Z12]{SSSZ12}
G.~Seregin, L.~Silvestre, V.~{\v{S}}ver{{\'a}}k, and A.~Zlato{\v{s}}.
\newblock On divergence-free drifts.
\newblock {\em J. Differential Equations}, 252(1):505--540, 2012.

\bibitem[{\v{S}}ve12]{SverakNotes}
V.~{\v{S}}ver{{\'a}}k.
\newblock Course notes.
\newblock {\em http://math.umn.edu/~sverak/course-notes2011.pdf}, 2011/2012.

\bibitem[SVZ12]{SVZ12}
L.~Silvestre, V.~Vicol, and A.~Zlato{\v s}.
\newblock On the loss of continuity for super-critical drift-diffusion
  equations.
\newblock {\em Archive for Rational Mechanics and Analysis}, 2012.

\bibitem[Tab02]{Tabeling02}
P.~Tabeling.
\newblock Two-dimensional turbulence: a physicist approach.
\newblock {\em Physics Reports}, 362(1):1--62, 2002.

\bibitem[Tsi01]{Tsinober01}
A.~Tsinober.
\newblock {\em An informal introduction to turbulence}, volume~63.
\newblock Springer, 2001.

\bibitem[Tur99]{Turkington99}
B.~Turkington.
\newblock Statistical equilibrium measures and coherent states in
  two-dimensional turbulence.
\newblock {\em Comm. Pure Appl. Math.}, 52(7):781--809, 1999.

\bibitem[Vio76]{Viot1}
M.~Viot.
\newblock {\em Solutions faibles d'\'{e}quations aux d\'{e}riv\'{e}es
  partielles non lin\'{e}aires}.
\newblock 1976.
\newblock Th\`{e}se, Universit\'{e} Pierre et Marie Curie, Paris.

\bibitem[VKF79]{VishikKomechFusikov1979}
M.~I. Vishik, A.~I. Komech, and A.~V. Fursikov.
\newblock Some mathematical problems of statistical hydromechanics.
\newblock {\em Uspekhi Mat. Nauk}, 34(5(209)):135--210, 256, 1979.

\end{thebibliography}
\end{document}